\documentclass{amsart}

\usepackage{graphicx}
\usepackage[all]{xy}
\usepackage{color}
\usepackage{amsmath}
\usepackage{amsfonts}
\usepackage{amsthm}
\usepackage{amssymb}
\usepackage{mathrsfs}
\usepackage{hyperref}

\theoremstyle{theorem}
\newtheorem{thrm}{Theorem}
\newtheorem{lem}[thrm]{Lemma}
\newtheorem{cor}[thrm]{Corollary}
\newtheorem{prop}[thrm]{Proposition}
\newtheorem{conj}[thrm]{Conjecture}

\numberwithin{thrm}{section} 

\def \Dj{\mbox{\raise0.3   ex\hbox{-}\kern-0.4em D}}

\newcommand{\im}{\operatorname{im}}

\newcommand{\Lie}{\operatorname{Lie}}
\newcommand{\Length}{\operatorname{Length }}
\newcommand{\divergence}{\operatorname{div}}
\newcommand{\Bx}{\operatorname{Box}}

\newcommand{\Vect}{\operatorname{Vect}}
\newcommand{\rank}{\operatorname{rank}}

\theoremstyle{remark}
\newtheorem{rem}[thrm]{Remark}
\newtheorem{exm}[thrm]{Example}
\newtheorem{defn}[thrm]{Definition}

\def\R{\mathbb{R}}

\def\Z{\mathbb{Z}}

\def\G{\mathbb{G}}
\def\F{\mathbb{F}}
\def\H{\mathbb{H}}
\def\T{\mathbb{T}}
\def\B{\mathcal{B}}

\begin{document}

\title[Coarse nodal counts on sub-Riemannian manifolds]{Coarse nodal counts on sub-Riemannian manifolds}

\author[Irene Silvestre-Rosell\'{o}]{Irene Silvestre-Rosell\'{o}}
\address{D\'e\-par\-te\-ment de math\'ematiques et de
sta\-tistique, Univer\-sit\'e de Mont\-r\'eal,  CP 6128 succ
Centre-Ville, Mont\-r\'eal,  QC  H3C 3J7, Canada.}
\email{irene.silvestre.rosello@umontreal.ca}

\author[Vuka\v sin Stojisavljevi\'c ]{Vuka\v sin Stojisavljevi\'c}
\address{Mathematical Institute, University of Oxford, Andrew Wiles Building, Radcliffe Observatory Quarter, Woodstock Road, Oxford, OX2 6GG}
\email{vukasin.stojisavljevic@gmail.com}
\maketitle

\begin{abstract}
We study coarse topology of nodal sets of linear combinations of eigenfunctions of sub-Laplacians. More precisely, we prove coarse versions of Courant's and B\'{e}zout's theorems for linear combinations of eigenfunctions of sub-Laplacians on compact nilmanifolds obtained as quotients of stratified groups. We conjecture the extensions of these results to general closed equiregular sub-Riemannian manifolds and outline a programme for proving them. The method we use combines topological persistence and anisotropic Sobolev theory of H\"{o}rmander vector fields, generalizing the ideas which have recently been implemented in the Riemannian case.
\end{abstract}

\tableofcontents

\section{Introduction}


The present article is concerned with coarse topology of the zero sets of linear combinations of eigenfunctions of sub-Laplacians and, more generally, maximally hypoelliptic operators. On the topological side, our perspective is rooted in \textit{topological persistence}. This is an emerging field of mathematics, which was initially developed as a part of data analysis, and has since grown into an independent subject. In our context, topological persistence offers a way to systematically define and study topological invariants of functions, which are robust with respect to $C^0$-perturbations (hence the adjective ``coarse").  On the side of spectral geometry, we take inspiration from a plethora of Courant-type theorems and questions, which date back to the Courant's celebrated nodal domain theorem. These ideas have recently been explored in the setting of sub-Riemannian geometry and our results can be considered a contribution to this research direction.

\subsection{Different flavours of Courant's theorem} Let $(M,g)$ be a closed, $n$-dimensional, Riemannian manifold. The positive Laplace-Beltrami operator $\Delta_g = - \divergence_g \circ \nabla_g$ has a discrete spectrum which tends to infinity, i.e.  its eigenvalues form a sequence, $0=\lambda_1 < \lambda_2 \leq \lambda_3 \leq \ldots \nearrow$, each $\lambda_i$ having a finite multiplicity. All eigenfunctions of $\Delta_g$ are $C^\infty$-smooth and there is a basis of $L^2(M,g)$ consisting only of such eigenfunctions, see \cite{LMP23} for preliminaries on spectral geometry. Given an eigenfunction $f$, it is customary to call the set $\{f=0\}$ the \textit{nodal set} of $f$, while the connected components of $M\setminus \{f=0\} $ are called \textit{nodal domains} of $f.$ We adopt this terminology for arbitrary continuous functions on $M.$ Denote by $m(f)$ the number of nodal domains of $f.$ Courant's nodal domain theorem states that if $\Delta_g f  =\lambda_i f$ then $m(f)\leq i.$ Combining this theorem with Weyl's law yields for each $f$, $\Delta f = \lambda f$ a bound of the form
\begin{equation}\label{Equation_Classical_Courant}
m(f)\leq C (\lambda+1)^\frac{n}{2},
\end{equation}
the constant $C$ depending only on $(M,g).$

The question of extending Courant's theorem to linear combinations of eigenfunctions is known as \textit{Courant–Herrmann conjecture} (another relevant term is the \textit{extended Courant property}). Given $\lambda>0$, let us denote by $\mathcal{F}_\lambda$ the set of all finite linear combinations of eigenfunctions of $\Delta_g$ with eigenvalues at most $\lambda.$ The most direct generalization of Courant's theorem predicts that if $f\in \mathcal{F}_{\lambda_i}$ then $m(f)\leq i.$ This prediction is correct in the 1-dimensional case,  even for more general Sturm-Liouville problems, as has been proven by Sturm in 1836. Another proof of this result, following an idea of Gelfand, as well as a historical overview can be found in \cite{BH20a,BH20b}. The first counterexample in higher dimensions was found by Viro in \cite{Viro79}, following a question of Arnold - \cite{Arnold73}.  A number of subsequent counterexamples were found in \cite{GZ03, BH18,BH21,BH21b,BCH22}. In light of these developments, it is natural to reformulate the conjecture, for example by asking if the inequality (\ref{Equation_Classical_Courant}) holds for $f\in \mathcal{F}_\lambda.$ A striking counterexample to this circle of questions was given by Buhovsky, Logunov and Sodin in \cite{BLS20}. They constructed a Riemannian metric $g_{BLS}$ on the 2-dimensional torus, such that there exists a finite linear combination of eigenfunctions of $\Delta_{g_{BLS}}$ which has infinitely many nodal domains. This demonstrated that there cannot exist any general bound on $m(f)$ for $f\in \mathcal{F}_\lambda.$

In contrast, positive results have recently been obtained for the \textit{coarse reformulation} of the Courant-Herrmann conjecture. To explain this, let us fix a real parameter $\delta>0$, which serves as a measure of coarseness. A nodal domain $U$ of $f$ is called  \textit{$\delta$-deep} if $\max_U |f| >\delta.$ Denote by $m_\delta(f)$ the number of $\delta$-deep nodal domains of $f.$ Coarse Courant's theorem states that for every $f\in \mathcal{F}_\lambda$, it holds that 
\begin{equation}\label{Equation_Coarse_Courant}
m_{\delta}(f)\leq C_{\delta} (\lambda+1)^\frac{n}{2},
\end{equation}
where the constant $C_\delta$ depends only on $(M,g)$ and $\delta.$ This theorem was proven in dimension 2 in \cite{PolSod07} and on a general closed manifold in \cite{BPPPSS22}. In light of the Buhovsky-Logunov-Sodin example, we know that $C_\delta \to +\infty$ as $\delta \to 0$ (the rate of convergence was discussed in \cite{BPPPSS22}). The proof of (\ref{Equation_Coarse_Courant}) given in \cite{BPPPSS22} relies on a connection between Sobolev theory and topological persistence. This idea has direct precursors in \cite{PolSod07,CSEHM, PPS19,Perez22} and more distant precursors in \cite{Kronrod50,Vitushkin55, Yomdin85}. 

\subsection{Courant-type theorems for sub-Laplacians}

So far we discussed Courant-type theorems on Riemannian manifolds. A parallel development is currently ongoing in the sub-Riemannian setting. Let $M$ be a closed manifold and $\xi$ a regular distribution on $M$, i.e. a subbundle of $TM.$ Denote by $C^\infty(M;\xi)$ the space of smooth sections of $\xi$ and by $[\cdot, \cdot]$ the commutator of vector fields.  Let  $\xi^{(0)}=\{0 \}$ be the zero subbundle of $TM$,  $\xi^{(1)}=\xi$ and for every integer $i\geq 2$ and every $p\in M$ let
$$\xi^{(i)}_p = Span_\R (\{ [X_1, [X_2,\ldots , [X_{j-1},X_j]\ldots ]_p~|~ 1\leq j \leq i, ~X_1,\ldots , X_j \in C^\infty(M; \xi))\}.$$
The distribution $\xi$ is called \textit{bracket generating} if there exists an integer $s\geq 1$ such that for all $p\in M$, $\xi^{(s)}_p=T_pM.$ The minimal such $s$ is called the \textit{step} of $\xi.$ A distribution of step $s$ is called \textit{equiregular} if each $\xi^{(i)}$, $1\leq i \leq s$, is a regular distribution on $M$, i.e. if for each $i$, $\dim \xi^{(i)}_p$ does not depend on $p\in M.$ Let us now fix an equiregular distribution $\xi$ on a closed manifold $M.$ The \textit{homogeneous dimension} $Q$ of $(M,\xi)$ is defined as $Q=\sum_{i=1}^s i \cdot (\rank \xi^{(i)}-\rank \xi^{(i-1)}).$ A \textit{sub-Riemannian metric} $g$ on $(M,\xi)$ is a smooth family of scalar products on $\xi.$ We call the triple $(M,\xi,g)$ an \textit{equiregular sub-Riemannian manifold}. Note that in the special case $\xi=TM$, we have that $s=1$, $Q=n$ and we recover the usual notion of a Riemannian manifold. Another widely-studied class of equiregular distributions are contact distributions. In this case $n$ is odd, $\xi$ is a maximally non-integrable hyperplane distribution, $s=2$ and $Q=n+1.$ In Section \ref{Section_Sub-Riemannian}, we briefly discuss basic notions of sub-Riemannian geometry and provide further references. Here, we restrict our attention to the spectral geometry of sub-Riemannian manifolds. To this end, let $(M,\xi,g)$ be a closed equiregular sub-Riemannian manifold and let us fix a Borel measure $\mu$ with a smooth positive density. The \textit{horizontal gradient} $\nabla^\xi f (p)$ of a smooth function $f$ at a point $p\in M$ is defined as the unique vector in $\xi_p$ such that $g(\nabla^\xi f(p), X_p)=df(p)(X_p)$ for every $X_p \in \xi_p.$ The \textit{sub-Laplacian} associated to $(M,\xi,g,\mu)$ is the operator $P = - \divergence_\mu \circ \nabla^\xi.$ The study of sub-Laplacians (and more general H\"{o}rmander's sub-Laplacians, see Section \ref{Section_Sub-Riemannian}), was initiated in \cite{Hormander67}, followed by a number of influential works. In analogy to the case of Laplace-Beltrami operators, each sub-Laplacian $P$ has a discrete spectrum which tends to infinity, i.e. there is an increasing diverging sequence of eigenvalues, each having finite multiplicity. Furthermore, the eigenfunctions of $P$ are $C^\infty$-smooth and there is a basis of $L^2(M,\mu)$ consisting of them. In order to complete the analogy with the Riemannian case, we need to discuss two results - Weyl's law and Courant's nodal domain theorem.

For $\lambda>0$, let $\frak{n}(\lambda)$ denote the number of eigenvalues of $P$ not greater than $\lambda$, counting multiplicities.  The Riemannian Weyl's law, i.e. the case $\xi=TM$, states that 
$$\frak{n}(\lambda)= const \cdot \lambda^{\frac{n}{2}}+o(\lambda^{\frac{n}{2}}),$$
as  $\lambda \to \infty.$ The generalization of this result to equiregular sub-Riemannian manifolds was proven by M\'{e}tivier in \cite{Metivier76}.  It states that 
$$\frak{n}(\lambda)= const \cdot \lambda^{\frac{Q}{2}} + o(\lambda^{\frac{Q}{2}}).$$
An interesting feature of this result is the appearance of the homogeneous dimension $Q$ in the exponent. It reflects a non-obvious connection to the sub-Riemannian geometry of $(M,\xi,g).$ Indeed, $Q$ can be interpreted as Hausdorff dimension of $(M,d_{cc})$, where $d_{cc}$ is the \textit{Carnot-Carath\'{e}odory distance}, defined by minimizing the lengths of curves tangent to $\xi$, see Section \ref{Section_Sub-Riemannian} and references therein for details. Further sub-Riemannian generalizations of Weyl's law have since been proven, see \cite{CdVHT22} and references therein.

In regards to Courant-type theorems, a bound on the number of nodal domains of eigenfunctions of sub-Laplacians was proven in \cite{EL23}, see also \cite{FH25a,FH25b,FH26} for related results. Combined with Weyl's law, the bound from \cite{EL23} implies a sub-Riemannian generalization of (\ref{Equation_Classical_Courant}). Namely,  for $f$ such that $P f= \lambda f$ it holds that 
\begin{equation}\label{Equation_Courant_Sub}
m(f)\leq C (\lambda+1)^\frac{Q}{2},
\end{equation}
where $C$ depends only on $(M,g,\xi)$ and $\mu.$ The fact that the Courant-Herrmann conjecture fails already in the Riemannian case discourages the search for such a result in the sub-Riemannian setting. However, we expect that the coarse reformulation should extend to this setting. More precisely, denoting by $\mathcal{F}_\lambda$ the space of finite linear combinations of eigenfunctions of $P$ with eigenvalues at most $\lambda$, we conjecture, see Conjecture \ref{Conjecture_Courant}, that for all $\delta>0$ and $f\in \mathcal{F}_\lambda$ it holds that
\begin{equation}\label{Equation_Coarse_Sub_Courant}
m_\delta(f)\leq C_{\delta} (\lambda+1)^\frac{Q}{2},
\end{equation}
with $C_{\delta}$ depending on $(M,g,\xi)$, $\mu$ and $\delta.$

As one of our main results, we prove (\ref{Equation_Coarse_Sub_Courant}) on nilmanifolds obtained as quotients of stratified groups by lattices, albeit with a slightly worse exponent $\frac{Q}{2}(1+\varepsilon)$ for arbitrarily small $\varepsilon$, see Theorem \ref{Thm:Courant_Nilmanifolds}. Since stratified groups serve as local models for general equiregular structures, we believe that this result is not only a proof of the special case of a general conjecture, but rather a concrete step towards a complete proof. With that in mind, in Subsection \ref{Intro_Equiregular} we outline a programme for proving (\ref{Equation_Coarse_Sub_Courant}) on any closed equiregular sub-Riemannian manifold.  Let us also mention that since our approach is rooted in topological persistence, it has an advantage of being rather general and applicable to a number of different questions. Indeed, besides the coarse Courant's theorem, we also prove a coarse version of B\'{e}zout's theorem for linear combinations of eigenfunctions of sub-Laplacians, see Theorem \ref{Theorem_Bezout_Main},  an anisotropic Sobolev estimate on barcodes, see Theorem \ref{Thm:Sobolev_Nilmanifolds}, as well as extend all the spectral geometric results to general maximally hypoelliptic operators, see Corollary \ref{Corollary_Maximally_Hypoelliptic}. Lastly,  our bounds apply to some of the higher homological degrees (notably degree $n-1$). Such topological bounds are related to the shape and the relative position of the connected components of the nodal set, which is a topic of independent interest, see \cite{GSKL26, Koirala26}. 

\subsection{Coarse Courant's theorem}
Let $\G=(G,\cdot)$ be a real Lie group and $\frak{g}$ its Lie algebra, which we identify with left-invariant vector fields on $G.$ A \textit{stratification} of $\frak{g}$ is a direct sum decomposition
$$\frak{g}=V_1 \oplus \ldots \oplus V_s,$$
such that $V_s\neq \{0 \}$ and for all $1\leq j\leq s-1$,
$$
V_{j+1}=Span_\R ([V_1,V_j]).
$$
A Lie group together with a stratification of its Lie algebra is called \textit{stratified}.  A \textit{Carnot group} is a stratified Lie group $\G$ together with a scalar product on $V_1.$\footnote{Some authors do not make a distinction between stratified and Carnot groups. Nevertheless, it will be convenient for us to separate the two notions.} Given a Carnot group $\G$, the vector fields from $V_1$ generate an equiregular distribution on $G$ whose step is $s.$ Furthermore, the scalar product on $V_1$ defines an equiregular sub-Riemannian structure on $G$, see Sections \ref{Section_Stratified} and \ref{Section_Sub-Riemannian} for details. A \textit{lattice} $\Gamma$ is a discrete subgroup of $\G$ such that the quotient $\Gamma \backslash \G$ is compact. Such a quotient is called a \textit{nilmanifold}.  It is a closed equiregular sub-Riemannian manifold, whose sub-Riemannian structure is induced from $\G.$ Basic examples of these notions are the real Heisenberg group and the Heisenberg nilmanifold, obtained as a quotient of the real Heisenberg group by the integer Heisenberg group, see Example \ref{Example_Heisenberg}.

Let us fix a Carnot group $\G$, a lattice $\Gamma < \G$ and the corresponding nilmanifold $N=\Gamma \backslash \G.$ Denote by $n$ the topological dimension of $N$ and by $Q$ the homogeneous dimension of $N.$ We equip $\G$ with a bi-invariant Haar measure $\mu$ and $N$ with the induced measure $\nu$, i.e. the unique measure whose density locally satisfies $\rho_\mu=\pi^* \rho_\nu$, $\pi:G\to N$ being the projection. Let $P$ be the sub-Laplacian on $N$ associated to $\nu.$ As before, for $\lambda>0$, denote by $\mathcal{F}_\lambda$ the space of all linear combinations of eigenfunctions of $P$ with eigenvalues at most $\lambda.$

Let $l\geq 1$ be an integer and $F:N\to \R^l$, $F=(f_1,\ldots , f_l)$ a continuous map. Our first goal is to study the topology of the set
$$\{F \neq 0\} = \{f_1 \neq 0 \}\cup \ldots \cup \{f_l \neq 0 \}.$$
To this end, we introduce a quantitative topological invariant of this set. The definition applies to arbitrary continuous maps on arbitrary Hausdorff topological spaces, but our main focus will be on maps $F$ for which $f_1,\ldots , f_l \in \mathcal{F}_\lambda$ for a fixed $\lambda>0.$

Denote by $\check{H}_d(\cdot)$ the degree-$d$ \v{C}ech homology with coefficients in an arbitrary field.  Given a threshold $\delta\geq 0$, the inclusion of subsets $\{ |F|>\delta \} \subset \{ F\neq 0 \}$ induces a linear map
$$ \check{H}_d(\{ |F|>\delta \}) \to \check{H}_d(\{ F\neq 0 \}).$$
We define the \textit{degree-d coarse nodal count of} $F$ as 
\begin{equation}\label{Definition_Coarse_Nodal_Count}
m_{d,\delta}(F)= \rank \left( \check{H}_d(\{ |F|>\delta \}) \to \check{H}_d(\{ F\neq 0 \}) \right) .
\end{equation}
This invariant was extensively studied in \cite{BPPPSS22}, with precursors in \cite{PPS19} and \cite{PolSod07}. Its conceptual origin lies in the field of topological persistence, as we will briefly mention in Subsection \ref{Intro_Barcodes} and further explain in Section \ref{Section_TDA}.

Before we state our first result, let us explain the intuition behind the above definition. Recall that a connected component of $\{ F\neq 0 \}$ is called a \textit{nodal domain} of $F.$\footnote{Traditionally, this terminology is used for functions, i.e. when $l=1.$} For $\delta\geq 0$, a nodal domain $U$ is called \textit{$\delta$-deep} if $\max_U |F| >\delta.$ Focusing on degree-0, we have that
$$m_{0,\delta} (F) = \text{the number of } \delta\text{-deep nodal domains of } F.$$
Indeed, $\dim \check{H}_0(\{ F\neq 0 \})$ equals the number of connected components of $\{ F\neq 0 \}$, while $\dim  \check{H}_0(\{ |F|>\delta \})$ equals the number of connected components of $\{ |F| > \delta \}.$ The expression (\ref{Definition_Coarse_Nodal_Count}) signifies the fact that $m_{0,\delta}(F) $ counts connected components of $\{F\neq 0 \}$ which contain a piece of $\{ |F| > \delta \},$ i.e. $\delta$-deep nodal domains of $F.$ Similarly, $m_{d,\delta}(F)$ can be thought of as a count of $\delta$-deep $d$-dimensional cycles inside of $\{F\neq 0 \}.$

\begin{thrm}\label{Thm:Courant_Nilmanifolds}
Let $k>\frac{Q}{2}$ be an integer and $\varepsilon_k=\frac{Q}{2k-Q}.$ Let $d\in \{0,n-1 \}.$ For every $F=(f_1,\ldots, f_l)$, $f_1,\ldots,f_l\in \mathcal{F}_\lambda$, $\|F\|_{L^2}=1$ and all $\delta>0$ it holds that
$$m_{d,\delta} (F) \leq \frac{C}{\delta^{2\varepsilon_k}} \left( \lambda+1 \right)^{\frac{Q}{2}(1+\varepsilon_k)},$$
where $C$ depends only on $N,\nu, k$ and $l.$ 
\end{thrm}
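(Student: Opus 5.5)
The plan is to follow the Riemannian strategy of \cite{BPPPSS22}, replacing isotropic Sobolev theory with the anisotropic (Folland--Stein) Sobolev spaces $W^{k,2}$ built from the horizontal vector fields $X_1,\dots,X_m$ spanning $V_1$. The argument has three steps.

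\textbf{Step 1: a coarse count bounded by a Sobolev norm.} This is the barcode estimate, the nilmanifold analogue of the Sobolev-barcode theorem, Theorem \ref{Thm:Sobolev_Nilmanifolds}. For $k>Q/2$ and $d\in\{0,n-1\}$ the target inequality is
$$m_{d,\delta}(F)\le C\Bigl(1+\delta^{-\frac{2Q}{2k-Q}}\,\|F\|_{W^{k,2}}^{\frac{2Q}{2k-Q}}\Bigr).$$
Write $\{|F|>\delta\}$ and $\{F\neq 0\}$ as sub/superlevel sets of $|F|$. Then $m_{d,\delta}(F)$ is at most the number of bars of length $>\delta$ in degree $d$ of the barcode of $|F|$, with the usual adjustment for degree $n-1$ via duality, as in \cite{BPPPSS22}.

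To count long bars, cover $N$ by Carnot--Carath\'eodory balls of radius $r$. There are $\asymp r^{-Q}$ of them, by Ahlfors $Q$-regularity of the Haar measure. On each ball, use the dilation-invariant Sobolev embedding $W^{k,2}\hookrightarrow C^{0}$, valid since $k>Q/2$, together with a Taylor/polynomial approximation on stratified groups (Folland--Stein). This bounds the oscillation of $F$ after subtracting a horizontal polynomial of degree $<k$:
$$\operatorname{osc}\le C r^{k-Q/2}\,\|F\|_{W^{k,2}(B_r)}.$$
Polynomials of bounded homogeneous degree have a bounded number of long bars per ball. A multiscale (Kronrod--Yomdin-type) counting over the balls then yields, after optimizing $r$ against $\delta$, the exponent $\frac{Q}{k-Q/2}=\frac{2Q}{2k-Q}$.

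The degree restriction $d\in\{0,n-1\}$ enters here. In these degrees bars can be localized in balls: connected components in degree $0$, and by Alexander duality on the orientable $N$ in degree $n-1$. Balls are contractible, so the local counting goes through.

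\textbf{Step 2: spectral bound on the Sobolev norm.} For $f\in\mathcal F_\lambda$, by anisotropic elliptic regularity (Rothschild--Stein, maximal hypoellipticity of $P$),
$$\|f\|_{W^{k,2}}\le C\|(P+1)^{k/2}f\|_{L^2}\le C(\lambda+1)^{k/2}\|f\|_{L^2}.$$
Applying this to each coordinate and using $\|F\|_{L^2}=1$ gives $\|F\|_{W^{k,2}}\le C_l(\lambda+1)^{k/2}$.

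\textbf{Step 3: substitution.} Inserting Step 2 into Step 1 gives a count of order
$$\delta^{-\frac{2Q}{2k-Q}}(\lambda+1)^{\frac{kQ}{2k-Q}}.$$
Now $\frac{2Q}{2k-Q}=2\varepsilon_k$, and
$$\frac{kQ}{2k-Q}=\frac Q2\cdot\frac{2k}{2k-Q}=\frac Q2(1+\varepsilon_k).$$
Absorbing the constant $1$ requires checking the small-$\delta$ regime, or using $\delta\le\|F\|_\infty\le C(\lambda+1)^{Q/4}$ (otherwise the count is $0$). This yields the claim.

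\textbf{Main obstacle.} I expect the main difficulty to be Step 1: the local polynomial-approximation and bar-counting argument in the anisotropic setting. Euclidean cubes must be replaced by a dyadic-type decomposition of $N$ into CC-balls or Christ cubes compatible with the lattice quotient. One also needs a bound on the topological complexity of sublevel sets of horizontal polynomials uniform under dilations, which I would get from the fact that in exponential coordinates these are genuine polynomials of bounded degree (Milnor--Thom).
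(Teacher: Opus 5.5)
The overall architecture of your proposal is the paper's. You prove a Sobolev-to-barcode estimate (Theorem \ref{Thm:Sobolev_Nilmanifolds} with $p=2$) via local polynomial approximation on $\lesssim r^{-Q}$ pieces, stability, degree-$0$ subadditivity and duality for $d=n-1$. Then you use $m_{d,\delta}(F)\le\mathcal N_{d,\delta/2}(-|F|)$, the subelliptic bound $\|F\|_{W^{k,2}_X}\le C(1+\lambda)^{k/2}$, the arithmetic $k\varepsilon_k=\frac Q2(1+\varepsilon_k)$, and absorb the additive constant via $\max|F|\le C_0\|F\|_{W^{k,2}_X}$. Steps 2 and 3 are fine.

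The gap is in the step you flag yourself: the choice of pieces. With CC balls, let alone Christ cubes, the uniform bound on the number of bars of the local approximants does not follow. Milnor--Thom needs semialgebraic domains, and CC balls are not semialgebraic; already in the Heisenberg group the CC sphere is described by transcendental functions. On a domain with uncontrolled boundary, even a linear function picks up a degree-$0$ bar from every strict local minimum on the boundary. Christ cubes carry no topological control at all, so even their $0$-moderateness is unclear.

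The paper avoids this by counting on the homogeneous boxes $B_\infty(g,r)=\Psi_g(\Bx_{(\omega_1,\ldots,\omega_n)}(r))$. In exponential coordinates centred at $g$ these are Euclidean boxes, and $P\circ\Psi_g$ is a genuine polynomial of degree at most $\deg_h P$ (Lemma \ref{Lemma_Polynomial_Degree_Change}). So the polynomial-on-a-box bound of \cite{BPPPSS22} applies uniformly in $g$ and $r$ (Proposition \ref{Proposition_Barcode_Bound}). The approximation itself is done on $B_{cc}(g,C_2r)\supset B_\infty(g,r)$ via the ball-box theorem. It uses the $L^p$ statement of Lu--Wheeden (Theorem \ref{Theorem_Polynomial_Approximation}); the Folland--Stein Taylor inequality alone bounds the remainder by a supremum of $k$-th derivatives, which $W^{k,2}$ does not control. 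You also skip that the approximant of $|F|$ is $S=(\sum_i P_i^2)^{1/2}$, not a polynomial. Since $t\mapsto t^2$ is increasing on $[0,\infty)$, $\pm S$ has as many bars as $\pm S^2\in\mathcal P_{2k-2}(\G)$ (Corollary \ref{Corollary_Barcode_Bound}).

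Two further corrections. First, no multiscale or dyadic hierarchy is used or needed. The exponent $\frac{2Q}{2k-Q}$ is exactly what a single-scale cover gives when $r$ is chosen so that $C r^{k-Q/2}\|F\|_{W^{k,2}_X}<\delta/2$. The radius must also be small enough for $\pi$ to be injective on each $B_{cc}(g_i,C_2r)$; this is where the preliminary reduction to $\delta\le C_0\|F\|_{W^{k,2}_X}$ enters. A multiscale argument is what would improve the exponent to $Q/k$, and the absence of a topologically tame self-similar hierarchy is precisely why the paper cannot do that.

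Second, the role of $d\in\{0,n-1\}$ is not contractibility of the pieces. Degree-$0$ subadditivity holds for any finite cover by compact $0$-moderate sets with no contribution from the intersections, because Mayer--Vietoris surjects onto $\check H_0$ of the union. In higher degrees, intersection terms appear. Degree $n-1$ then follows from degree $0$ for $\mp|F|$ by duality on $N$, which is orientable (indeed parallelizable).
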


The connection between Theorem \ref{Thm:Courant_Nilmanifolds} and the classical Courant's nodal domain theorem can be seen by setting $l=1$ and $\delta=0.$ In this case $F:N\to \R$,  $\{ |F|>\delta \} = \{ F\neq 0 \}$ and thus $m_{d,0}(F)=\dim \check{H}_d(\{ F\neq 0 \}).$ In particular, $m_{0,0}(F) =m(F)$ equals the number of nodal domains of $F.$ The name \textit{coarse Courant's theorem} refers to the fact that Theorem \ref{Thm:Courant_Nilmanifolds} only applies to $\delta>0.$

\begin{rem}
Theorem \ref{Thm:Courant_Nilmanifolds} is a corollary of a more general Sobolev estimate given by Theorem \ref{Thm:Sobolev_Nilmanifolds}. As a consequence, Theorem \ref{Thm:Courant_Nilmanifolds} extends to sub-Laplacians associated to arbitrary measures with smooth positive densities, see Remark \ref{Remark_Measures}, and even to general maximally hypoelliptic operators, see Corollary \ref{Corollary_Maximally_Hypoelliptic} and the discussion around it.
\end{rem}

\subsection{Coarse B\'{e}zout's theorem}

Let $\lambda>0$ and let $N=\Gamma \backslash \G, Q,\nu,P, \mathcal{F}_\lambda,l, \check{H}_d(\cdot)$ be as before.  We denote by $b_d(N)$ the degree-$d$ Betti number of $N.$ Given a continuous map $F:N\to \R^l$, $F=(f_1,\ldots , f_l)$, our next goal is to study the topology of the set
$$\{F= 0\} = \{f_1 =0 \}\cap \ldots \cap \{f_l=0 \}.$$
We introduce a quantitative topological invariant of this set, in a similar spirit to the definition of the coarse nodal count. Namely, for $\delta\geq 0$ we define the \textit{degree-$d$ coarse count of zeros} as
\begin{equation}\label{Definition_Coarse_Zero_Count}
z_{d,\delta}(F)= \rank \left( \check{H}_d(\{ F=0 \}) \to \check{H}_d(\{ |F|\leq \delta \}) \right).
\end{equation}
The coarse count of zeros has previously been studied in \cite{BPPPSS22} and \cite{BPPSS23}. As in the case of the coarse nodal count, a conceptual framework in which this invariant appears is given by topological persistence. 

In order to provide intuition for this notion, let us again take $d=0.$ In this case
$$z_{0,\delta}(F)= \text{the number of connected components of } \{|F|\leq \delta \} \text{ which intersect } \{ F=0\}.$$
In a similar spirit, $z_{d,\delta}(F)$ counts $d$-dimensional cycles in $\{|F|\leq \delta \}$ which have a representative in $\{F=0 \}.$

\begin{thrm}\label{Theorem_Bezout_Main}
Let $k>\frac{Q}{2}, l\geq 1$ be integers and $\varepsilon_k=\frac{Q}{2k-Q}.$ Let $d\in \{0,n-1 \}.$ For every $F=(f_1,\ldots, f_l)$, $f_1,\ldots,f_l\in \mathcal{F}_\lambda$, $\|F\|_{L^2}=1$ and all $\delta>0$ it holds that
$$z_{d,\delta} (F) \leq \frac{C}{\delta^{2\varepsilon_k}} \left( \lambda+1 \right)^{\frac{Q}{2}(1+\varepsilon_k)} +b_d(N),$$
where $C$ depends only on $N,\nu,k$ and $l.$
\end{thrm}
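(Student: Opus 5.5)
Theorem \ref{Theorem_Bezout_Main} will be derived from the same Sobolev estimate on barcodes that underlies Theorem \ref{Thm:Courant_Nilmanifolds}, namely Theorem \ref{Thm:Sobolev_Nilmanifolds}, applied to the function $|F|$. The plan is to express $z_{d,\delta}(F)$ through the barcode of the sublevel-set persistence module of $h=|F|:N\to\R$. The homology $\check{H}_d(\{h\le t\})$, $t\ge 0$, forms a persistence module. The zero set $\{F=0\}=\{h\le 0\}$ is its starting level, and $z_{d,\delta}(F)$ is the rank of the structure map from level $0$ to level $\delta$. By the standard structure theorem (continuity of \v{C}ech homology on compacta makes the module well behaved), this rank equals the number of bars of the degree-$d$ barcode of $h$ that contain both $0$ and $\delta$.

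These bars split into two groups. First come the infinite bars, of which there are exactly $b_d(N)$, because $\{h\le t\}=N$ for $t\ge\max h$. Second come the finite bars $[0,e)$ with $e>\delta$, i.e.\ bars of length greater than $\delta$. So
$$z_{d,\delta}(F)\le b_d(N)+\mathcal{N}_\delta(h),$$
where $\mathcal{N}_\delta$ counts finite bars of length $>\delta$.

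Next I would bound $\mathcal{N}_\delta$ by the anisotropic Sobolev estimate. For $u$ in the horizontal Sobolev space $W^{k,2}$ with $k>Q/2$, I expect Theorem \ref{Thm:Sobolev_Nilmanifolds} to give
$$\mathcal{N}_\delta(u)\le C\,\delta^{-2\varepsilon_k}\,\|u\|_{W^{k,2}}^{Q/k\cdot(\ldots)},$$
an estimate designed to produce exactly the exponent $\frac{Q}{2}(1+\varepsilon_k)$ once applied to spectral functions. The difficulty is that $|F|$ is not smooth. I would circumvent this in one of two ways. One option is to work with $|F|^2=\sum f_i^2$, which is smooth: its sublevel sets at $\delta^2$ are the same sets, and bar lengths transform monotonically. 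The other is to state the barcode estimate in the form used in the Courant case, where it is applied to the maps $F$ directly. The first seems cleanest. Then $\delta$ becomes $\delta^2$, and the norm $\|\sum f_i^2\|_{W^{k,2}}$ must be controlled. Using the Leibniz rule for the left-invariant horizontal fields together with the sub-Riemannian Sobolev embedding, this norm is bounded by $C(\lambda+1)^{k/2}\cdot\|F\|_{L^\infty}$-type factors. Each $L^\infty$ factor is bounded by $(\lambda+1)^{Q/4}$ via the spectral-projector / Weyl bound on $\mathcal{F}_\lambda$ coming from M\'etivier's law.

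I expect the main obstacle to be this last exponent bookkeeping: making the squared (or otherwise smoothed) function yield exactly $\delta^{-2\varepsilon_k}(\lambda+1)^{\frac Q2(1+\varepsilon_k)}$ rather than a worse power. If it does not match, I would instead mimic precisely how Theorem \ref{Thm:Courant_Nilmanifolds} was derived. There the same finite-bar count of $|F|$ (in the superlevel, or equivalently the relative, setting) is what is controlled. Alexander-type duality in degrees $0$ and $n-1$ (the reason $d\in\{0,n-1\}$ appears) exchanges the nodal count with the zero count up to the Betti-number correction $b_d(N)$. Hence the same constant $C$ carries over and yields the stated inequality.
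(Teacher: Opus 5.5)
Your first step is fine: $z_{d,\delta}(F)$ is the number of bars of $\B_d(|F|)$ containing $[0,\delta]$, and splitting off the $b_d(N)$ infinite bars is essentially Proposition \ref{Proposition_Counts_Comparison}.

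\textbf{The gap is in bounding the remaining bars.} You treat the non-smoothness of $|F|$ as an obstacle, but it is not one. Theorem \ref{Thm:Sobolev_Nilmanifolds} is already stated for $\pm|F|$, with the norm $\|F\|_{W^{k,p}_X}$ of the smooth map $F$. Its proof only needs $C^0$-approximation of $|F|$ on small balls by $\sqrt{\sum_i P_i^2}\in\mathcal{S}_{k-1}(\G)$ (Corollary \ref{Corollary_Polynomial_Approximation}). The barcodes of these approximants are controlled by Corollary \ref{Corollary_Barcode_Bound}, and stability finishes the argument.

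Your workaround through $u=|F|^2$ cannot give the stated exponents. Since $\{u\le t\}=\{|F|\le\sqrt t\}$, the bars you must count are bars of $u$ containing $[0,\delta^2]$. So the barcode bound must be applied with threshold $\delta^2$, which yields $(\|u\|_{W^{k,2}_X}/\delta^{2})^{2\varepsilon_k}$, i.e.\ a factor $\delta^{-4\varepsilon_k}$. Since $\|u\|_{W^{k,2}_X}\ge\|u\|_{L^2}=\|F\|_{L^4}^2\ge\nu(N)^{-1/2}$, the extra factor $\delta^{-2\varepsilon_k}$ cannot be absorbed as $\delta\to0$. The $L^\infty$ factors from the Leibniz rule also cost further powers of $(1+\lambda)$.

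Your fallback is not an argument either. Proposition \ref{Proposition_Duality_Barcodes} relates finite bars of $f$ in degree $k$ to finite bars of $-f$ in degree $n-k-1$; it does not convert $m_{d,\delta}$ into $z_{d,\delta}$. The restriction $d\in\{0,n-1\}$ has a different origin: the improved subadditivity (Theorem \ref{Therem_Subadditivity_Deg_0}) holds only in degree $0$, and degree $n-1$ is recovered by duality inside the proof of Theorem \ref{Thm:Sobolev_Nilmanifolds}.

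\textbf{What is missing} is the direct application together with the right spectral input.

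With $p=2$, the exponent $\frac{Qp}{kp-Q}$ of Theorem \ref{Thm:Sobolev_Nilmanifolds} equals $\frac{2Q}{2k-Q}=2\varepsilon_k$. This exponent applies to both $\|F\|_{W^{k,2}_X}$ and $\delta^{-1}$, as scale invariance forces; your vague exponent on the norm should be exactly this. Hence $z_{d,\delta}(F)\le\mathcal{N}_{d,\delta}(|F|)\le C'(\|F\|_{W^{k,2}_X}/\delta)^{2\varepsilon_k}+b_d(N)$.

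The Sobolev norm should then be controlled by the subelliptic estimate of Theorem \ref{Theorem_Subelliptic_Estimates}, not by $L^\infty$ or Weyl-law bounds. The estimate applies because $P=P_{X,\nu}$ for an orthonormal basis $X$ of $V_1$, and it gives $\|f_i\|_{W^{k,2}_X}\le\tilde C\|(1+P)^{k/2}f_i\|_{L^2}$. Expanding $f_i$ in orthonormal eigenfunctions with eigenvalues at most $\lambda$ gives $\|(1+P)^{k/2}f_i\|_{L^2}\le(1+\lambda)^{k/2}\|f_i\|_{L^2}$. Hence $\|F\|_{W^{k,2}_X}\le\tilde C(1+\lambda)^{k/2}$ when $\|F\|_{L^2}=1$.

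The bookkeeping you worried about then closes exactly, since $k\varepsilon_k=\frac{kQ}{2k-Q}=\frac{Q}{2}(1+\varepsilon_k)$.
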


To explain the connection between Theorem \ref{Theorem_Bezout_Main} and the classical B\'{e}zout's theorem, let us take $l=n$ and $\delta=0.$ In this case, $\{|F|\leq \delta \} = \{ F=0 \}$ and thus $z_{d,0}(F)=\dim \check{H}_d(\{ F= 0 \}).$ Furthermore, since  
$$\{F= 0\} = \{f_1 =0 \}\cap \ldots \cap \{f_n=0 \},$$
for a generic choice of $f_1,\ldots ,f_n$, the set $\{ F=0 \}$ consists of a finite number of points and $z_{0,0}(F)$ counts these points. In an algebraic setting (for example when $f_1,\ldots , f_n$ are real polynomials on $\R^n$), B\'{e}zout's theorem exactly bounds $z_{0,0}(F).$ As in the case of coarse Courant's theorem, the name \textit{coarse B\'{e}zout's theorem} refers to the fact that $\delta>0$ in Theorem \ref{Theorem_Bezout_Main}. Figure \ref{Figure_Coarse_Zeros} illustrates the coarse count of zeros of a map $F:\R^2 \to \R^2.$ The two curves represent the zero sets of $f_1$ and $f_2$, while the two oval shaded regions represent $\{ |F| \leq \delta \}.$ The zero sets have 6 genuine intersections,  inside 2 shaded regions. Hence $z_{0,0}(F)=6$, while $z_{0,\delta}(F)=2.$

\begin{figure}[ht]
	\begin{center}
		\includegraphics[scale=0.65]{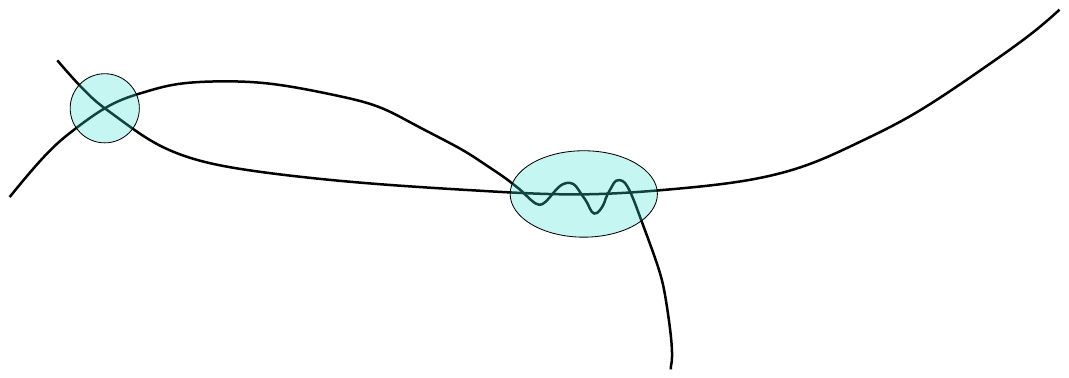}
		\caption{Coarse count of zeros}
		\label{Figure_Coarse_Zeros}
	\end{center}
\end{figure}

\begin{rem}
Analogously to Theorem \ref{Thm:Courant_Nilmanifolds}, Theorem \ref{Theorem_Bezout_Main} extends to sub-Laplacians associated to arbitrary measures with smooth positive densities and to general maximally hypoelliptic operators, see Remark \ref{Remark_Measures}, Corollary \ref{Corollary_Maximally_Hypoelliptic} and the discussion around it.
\end{rem}

\subsection{Anisotropic Sobolev estimates on barcodes}\label{Intro_Barcodes}

Let $\G=(G,\cdot)$ be a stratified group, $\frak{g}$ its stratified Lie algebra and let $X_1,\ldots , X_m$ be a basis of the first layer of stratification of $\frak{g}$ (in particular, $X_i$ bracket-generate the whole $\frak{g}$). As before, let $\Gamma<\G$ be a lattice, $N=\Gamma \backslash \G$ the corresponding nilmanifold and denote by $Q$ the homogeneous dimension of $\G$ and $N.$ The projection $\pi: G\to N$ is a smooth covering and the vector fields $\pi_*(X_1), \ldots , \pi_*(X_m)$ generate a distribution which defines a sub-Riemannian structure on $N.$ Slightly abusing the notation, we denote these vector fields by $X_1,\ldots , X_m$ as well. We fix a bi-invariant Haar measure $\mu$ on $\G$ and the induced measure $\nu$ on $N.$  For $f\in C^\infty (N)$, $p\geq 1$ and an integer $k\geq 1$, let
$$\| X^k f \|_{L^p}= \sum_{1\leq i_1 , \ldots ,   i_k \leq m} \| X_{i_1}\circ \ldots \circ X_{i_k} f \|_{L^p},$$
and
\begin{equation}\label{eq:anisotropic_Sobolev}
 \|f \|_{W^{k,p}_X} = \sum_{i=1}^k \| X^i f \|_{L^p} + \| f\|_{L^p}. 
\end{equation}
Similarly, for a smooth map $F\in C^\infty(N ; \R^l)$, $F=(f_1,\ldots , f_l)$, we denote
\begin{equation}\label{eq:Maps_anisotropic_Sobolev}
 \|F \|_{W^{k,p}_X} = \left( \sum_{i=1}^l \|f_i \|_{W^{k,p}_X}^2 \right)^\frac{1}{2}.
\end{equation}

Given $\delta>0$ and a continuous function $f:N\to \R$, we wish to consider the \textit{persistence barcode of} $f.$ This is a multiset of intervals in $\R$, called \textit{bars}, which describe the topological changes of sublevel sets $\{f \leq t \}$ as $t$ varies in $\R.$ More precisely, for every $s\leq t$, $\rank (\check{H}_d (\{ f\leq s \} ) \to \check{H}_d (\{ f\leq t \} ))$ equals the number of bars in the degree-$d$ barcode of $f$ which contain the interval $[s,t],$ see Section \ref{Section_TDA} for details. Let $\mathcal{N}_{d,\delta}(f)$ be the number of bars of length greater than $\delta$ in the degree-$d$ barcode of $f.$ This invariant was first studied in \cite{CSEHM} and later used in a number of different contexts, the most relevant for us being in \cite{PPS19,BPPPSS22}.

\begin{thrm}\label{Thm:Sobolev_Nilmanifolds}
Let $p\geq 1, l\geq 1$ an integer, $k$ an integer such that $kp>Q$ and $d\in \{0, n-1 \}.$ For every $F\in C^{\infty}(N;\R^l) $ and all $\delta>0$ it holds that
$$\mathcal{N}_{d,\delta} (\pm|F|) \leq C \left( \frac{ \|F \|_{W^{k,p}_X} }{\delta} \right)^{\frac{Qp}{kp-Q}}+b_d(N),$$
where $C$ depends only on $\G,\Gamma,\mu,l,k,p$ and the vector fields $X_1,\ldots,X_m.$ 
\end{thrm}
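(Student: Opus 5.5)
We adapt the Riemannian argument of \cite{BPPPSS22}: a multiscale covering of $N$ by Carnot--Carath\'eodory balls, combined with a local oscillation bound coming from an anisotropic Sobolev--Morrey embedding. The plan has four steps.

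\emph{Step 1: local H\"older control.} For $kp>Q$ we need a sub-Riemannian Morrey inequality on balls. Concretely, for $g\in C^\infty$ and any CC-ball $B(x,r)\subset N$ of radius $r\le r_0$ (with $r_0$ below the injectivity scale of $\pi$, so that balls lift isometrically to $\G$), we aim for
$$\operatorname{osc}_{B(x,r)} g \le C\, r^{k-\frac{Q}{p}}\, \|X^k g\|_{L^p(B(x,2r))} + C\sum_{j<k} r^{j-\frac Qp}\|X^j g\|_{L^p(B(x,2r))}.$$
A cleaner route is a polynomial-approximation version. On $\G$, approximate $g$ on $B(x,r)$ by a horizontal Taylor polynomial of homogeneous degree $<k$ (Folland--Stein), with sup-error $\le C r^{k-Q/p}\|X^k g\|_{L^p(B(x,2r))}$. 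This uses dilation invariance: rescale to the unit ball via $\delta_{1/r}$, apply the compact Sobolev embedding $W^{k,p}_X(B_1)\hookrightarrow C^0$ modulo polynomials, and scale back. This is where $Q$ enters, since Haar measure scales as $r^Q$.

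\emph{Step 2: counting bars of functions close to polynomials.} For $|F|$ itself we use that $|F|$ is $1$-Lipschitz in $F$. On a ball where $F$ is uniformly within $\delta/4$ of a polynomial map $P_B$, the bars of $|F|$ longer than $\delta$ that are ``born'' in the ball are controlled by the bars of $|P_B|$ restricted to the ball. These restricted polynomial maps lie in a finite-dimensional space and live on the rescaled unit ball. For $d=0$ and $d=n-1$, a uniform bound on the number of components (respectively Alexander-dual components) of semialgebraic-type sets in exponential coordinates gives a constant $C_0(k,l)$ per ball. This is why only $d\in\{0,n-1\}$ is claimed: these are the degrees in which a Mayer--Vietoris type subadditivity over coverings holds cleanly (degree $n-1$ via duality with $H_0$ of complements).

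\emph{Step 3: adaptive covering and subadditivity.} We run a stopping-time decomposition. Start from a cover of $N$ by balls of radius $r_0$ with bounded overlap, and subdivide a ball $B$ (in the doubling CC metric, with Whitney-type covers) as long as $\|X^kF\|^p_{L^p(2B)} > (\delta/C)^p r^{Q-kp}$. Each terminal ball is either ``good'', meaning its approximation error is $<\delta/4$, or the subdivision stops at the base scale. Summing the energy over the stopping balls, with bounded overlap, gives the count of terminal balls
$$\#\lesssim \frac{\|F\|^p_{W^{k,p}_X}}{\delta^p r^{Q-kp}}\quad\text{at scale } r,$$
and optimizing against the trivial count $r^{-Q}$ yields the total $\lesssim (\|F\|/\delta)^{Qp/(kp-Q)}$. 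The exponent follows from solving $r^{-Q}=\delta^{-p}\|F\|^p r^{kp-Q}$. We then invoke the barcode subadditivity lemma from \cite{BPPPSS22}, presumably recalled in Section \ref{Section_TDA}: for $d\in\{0,n-1\}$, $\mathcal N_{d,\delta}(h)\le \sum_B \mathcal N_{d,\delta/2}(h|_B)+b_d(N)$ for suitable covers. The $b_d(N)$ term accounts for infinite bars. Combining this with Step 2 gives the theorem for $+|F|$; for $-|F|$ the same argument applies.

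\emph{Main obstacle.} I expect the hardest part to be Step 2 together with the compatibility of the subadditivity lemma with CC balls. The lemma in \cite{BPPPSS22} is formulated for cubical/Euclidean covers, so we need CC-ball covers of bounded multiplicity whose pieces and intersections are topologically tame (e.g. left translates of dilates of a fixed nice box in exponential coordinates). We also need the uniform bound on bars of polynomial maps on such boxes, independent of the box. My first attempt would use dilation-invariant boxes $\Bx$ in exponential coordinates, where polynomials stay polynomials of bounded degree, and apply Milnor--Thom bounds.
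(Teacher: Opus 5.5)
\textbf{The gap: degree $n-1$.} The inequality you invoke in Step 3, $\mathcal N_{n-1,\delta}(h)\le\sum_B\mathcal N_{n-1,\delta/2}(h|_B)+b_{n-1}(N)$, is false. By Mayer--Vietoris, $(n-1)$-classes of a union can come from $\check H_{n-2}$ of the intersections. So an $(n-1)$-cycle of a sublevel set that is larger than every piece of the cover is invisible to all the restrictions. For example, on $\T^2$ take $h$ to be a crater of depth $1$: a peak surrounded by a ring-shaped valley, with $h\equiv 1$ outside. The core circle of the valley gives a finite degree-$1$ bar of length $1$. For a cover by boxes much smaller than the crater, no restriction has a degree-$1$ bar longer than $\delta/2$, and your stopping rule gives no lower bound on piece size. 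For $\delta$ slightly below $1$, the left side is $3$ and the right side is $b_1(\T^2)=2$.

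Alexander duality inside each ball cannot repair this, because the loss happens in the gluing (cf.\ Remark~\ref{Remark_Subadditivity_Higher_Deg}). The paper never localizes in degree $n-1$. Since $N$ is closed and orientable, Proposition~\ref{Proposition_Duality_Barcodes} gives $\mathcal N_{n-1,\delta}(\pm|F|)=\mathcal N^{fin}_{0,\delta}(\mp|F|)+b_{n-1}(N)$, so it suffices to prove the degree-$0$ bound for both signs, which you do. This also removes your worry about tame intersections. Theorem~\ref{Therem_Subadditivity_Deg_0} holds for any finite cover by $0$-moderate compact sets, with no loss in $\delta$ and no intersection terms. You only need the pieces to be homeomorphic to boxes, e.g.\ $\pi(B_\infty(g,r))$ for small $r$. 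The per-piece bound then follows from Lemma~\ref{Lemma_Polynomial_Degree_Change} and Corollary~\ref{Corollary_Barcode_Bound}; Milnor--Thom is not needed.

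\textbf{Degree $0$: a different route, and an exponent slip.} The paper works at a single scale. It chooses $r\asymp(\delta/\|F\|_{W^{k,p}_X})^{p/(kp-Q)}$ so that $C_1(C_2r)^{k-Q/p}\|F\|_{W^{k,p}_X}<\delta/2$ with the \emph{global} norm. It then covers a compact fundamental set by $q\lesssim r^{-Q}$ boxes (Lemma~\ref{Lemma_Uniscale_Covering}), and each box contributes at most $((2k-1)^n+1)/2$ bars. The count $r^{-Q}$ is exactly $(\|F\|/\delta)^{Qp/(kp-Q)}$.

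Your balancing equation $r^{-Q}=\delta^{-p}\|F\|^p r^{kp-Q}$ does not give that exponent. It gives $r=(\delta/\|F\|)^{1/k}$ and a total of order $1+(\|F\|_{W^{k,p}_X}/\delta)^{Q/k}$. Since $Q/k<Qp/(kp-Q)$, this still implies the theorem once you add the trivial regime $C_0\|F\|_{W^{k,p}_X}<\delta$. In that regime the oscillation of $|F|$ is $<\delta$ by Corollary~\ref{Corollary_Basic_Sobolev}, so only the $b_d(N)$ infinite bars count; you need it to absorb the additive constant. However, $Q/k$ is the exponent of Conjecture~\ref{Conjecture_Sobolev}, which the paper leaves open because it does not see a multiscale scheme. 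So your bookkeeping must be airtight.

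Independent Whitney subdivisions of each ball can let overlaps compound from one generation to the next, which breaks the energy summation. The argument does go through if you proceed as follows:
\begin{itemize}
\item At each dyadic scale $r_j$, fix one Vitali--Wiener family $\{B_\infty(z,r_j)\}_z$. Then the sets $\pi(B_{cc}(z,C_2r_j))$ overlap boundedly, uniformly in $j$.
\item Call a box good if the local error $C_1(C_2r_j)^{k-Q/p}\|X^k\tilde F\|_{L^p(B_{cc}(z,C_2r_j))}$ is $<\delta/2$.
\item Keep a good box of scale $j$ only if it contains a point that lies in no good box of coarser scale.
\end{itemize}
Each kept box of scale $j\ge1$ then meets a bad box of scale $j-1$. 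So scale $j$ contributes $\lesssim\min\bigl(r_j^{-Q},\,\delta^{-p}\|F\|^p_{W^{k,p}_X}r_j^{kp-Q}\bigr)$, and summing the two geometric series gives $(\|F\|/\delta)^{Q/k}$. No nested dyadic hierarchy is needed, because degree-$0$ subadditivity does not see how the pieces intersect. Done this way, your route gives a sharper exponent. The paper's single-scale argument is shorter and gives exactly the stated one.
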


Theorem \ref{Thm:Sobolev_Nilmanifolds} connects to Theorems \ref{Thm:Courant_Nilmanifolds} and \ref{Theorem_Bezout_Main} via the inequalities
$$m_{d,\delta}(F)\leq \mathcal{N}_{d,\frac{\delta}{2}}(-|F|),$$
$$z_{d,\delta}(F)\leq \mathcal{N}_{d,\frac{\delta}{2}}(|F|),$$
given by Proposition \ref{Proposition_Counts_Comparison}. In general, the discrepancy between $m_{d,\delta}(F), z_{d,\delta}(F)$ and $\mathcal{N}_{d,\frac{\delta}{2}}(\pm|F|)$ can be arbitrarily large, see Example \ref{Example_Discrepancy}. A couple of remarks about Theorem \ref{Thm:Sobolev_Nilmanifolds} are in order.

\begin{rem} The notation $\pm |F|$ in Theorem \ref{Thm:Sobolev_Nilmanifolds} signifies the fact that the upper bound applies to both $\mathcal{N}_{d,\delta} (|F|)$ and $\mathcal{N}_{d,\delta} (-|F|).$ 
\end{rem}

\begin{rem}
The term $+~b_d(N)$ on the right-hand side of the inequality in Theorem \ref{Thm:Sobolev_Nilmanifolds} accounts for infinite bars in the degree-$d$ barcode of $\pm|F|$, see Example \ref{Example_Functions_Pers2}. This term is necessary, which can be seen by taking $F$ to be the zero map.
\end{rem}

\begin{rem}\label{Remark_Measures}
The condition of $\mu$ being a Haar measure in Theorem \ref{Thm:Sobolev_Nilmanifolds} can be loosened to $\mu$ being any measure with a smooth positive density. Indeed, since $N$ is compact, any two Sobolev norms with respect to such measures are equivalent, while $\mathcal{N}_{d,\delta}(\pm |F|)$ and $b_d(N)$ do not depend on the measure.
\end{rem}

\begin{rem}
In the special case $l=1$, $F$ is a continuous function on $N$ and it makes sense to consider $\mathcal{N}_{d,\delta} (F).$ The barcode of $F$ is not related in an obvious way to either of the barcodes of $|F|$ and $-|F|.$ Nevertheless, the proof of Theorem \ref{Thm:Sobolev_Nilmanifolds} can be adapted in a straightforward manner to show the same inequality with $\mathcal{N}_{d,\delta} (F)$ instead of $\mathcal{N}_{d,\delta} (\pm|F|).$ We will not include details of this proof since the bound on $\mathcal{N}_{d,\delta} (F)$ does not relate to our spectral results (coarse Courant's and B\'{e}zout's theorems).
\end{rem}

Coarse Courant's and B\'{e}zout's theorems (Theorems \ref{Thm:Courant_Nilmanifolds} and \ref{Theorem_Bezout_Main}) follow from Theorem \ref{Thm:Sobolev_Nilmanifolds} via hypoelliptic coercivity estimates. Thus, it is natural to extend these results to a more general class of differential operators which satisfy such estimates. To this end, recall that a differential operator of order $r$ of the form
$$\mathcal{D}=\sum_{1\leq i_1 , \ldots ,   i_r \leq m} a_{i_1 , \ldots ,   i_r} \cdot X_{i_1}\circ \ldots \circ X_{i_r},~ a_{i_1 , \ldots ,   i_r} \in C^\infty(N),$$
is called \textit{maximally hypoelliptic} if there exists a constant $C$ such that for every $f\in C^\infty(N)$ it holds that
\begin{equation}\label{Equation_Maximally_Hypoelliptic_Functions}
\| f \|_{W^{r,2}_X} \leq C(\|\mathcal{D} f \|_{L^2} + \| f\|_{L^2}).
\end{equation}
For vector valued maps $F=(f_1,\ldots , f_l)$, we apply $\mathcal{D}$ coordinate-wise, i.e. $\mathcal{D}F=(\mathcal{D}f_1,\ldots,\mathcal{D}f_l).$ It is easy to check that (\ref{Equation_Maximally_Hypoelliptic_Functions}) extends to $F\in C^\infty(N ; \R^l)$ (possibly with a different constant $C$ which also depends on $l$), namely
\begin{equation}\label{Equation_Maximally_Hypoelliptic_Systems}
\| F \|_{W^{r,2}_X} \leq C(\|\mathcal{D} F \|_{L^2} + \| F \|_{L^2}).
\end{equation}
The following is a corollary of Theorem \ref{Thm:Sobolev_Nilmanifolds}.
\begin{cor}\label{Corollary_Maximally_Hypoelliptic} Let $\mathcal{D}$ be a maximally hypoelliptic operator of order $r>\frac{Q}{2}$ and $d\in \{0, n-1 \}.$ For every $F\in C^{\infty}(N;\R^l) $ and all $\delta>0$ it holds that
$$m_{d,\delta} (F) \leq C \left( \frac{ \|\mathcal{D} F \|_{L^2} + \|F \|_{L^2} }{\delta} \right)^{\frac{2Q}{2r-Q}},$$
$$z_{d,\delta} (F) \leq C \left( \frac{ \|\mathcal{D} F \|_{L^2} + \|F \|_{L^2} }{\delta} \right)^{\frac{2Q}{2r-Q}}+b_d(N),$$
where $C$ depends only on $\G,\Gamma,\mu,l$ and $\mathcal{D}.$
\end{cor}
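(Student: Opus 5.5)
The plan is to combine Theorem \ref{Thm:Sobolev_Nilmanifolds} with $p=2$ and $k=r$, the comparison inequalities of Proposition \ref{Proposition_Counts_Comparison}, and the coercivity estimate (\ref{Equation_Maximally_Hypoelliptic_Systems}). Since $r>\frac{Q}{2}$, we have $kp=2r>Q$, so Theorem \ref{Thm:Sobolev_Nilmanifolds} applies. Its exponent is $\frac{Qp}{kp-Q}=\frac{2Q}{2r-Q}$, which is exactly the exponent in the statement.

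For the coarse count of zeros, Proposition \ref{Proposition_Counts_Comparison} gives $z_{d,\delta}(F)\leq \mathcal{N}_{d,\frac{\delta}{2}}(|F|)$. Theorem \ref{Thm:Sobolev_Nilmanifolds} then gives
$$z_{d,\delta}(F)\leq C\left(\frac{2\|F\|_{W^{r,2}_X}}{\delta}\right)^{\frac{2Q}{2r-Q}}+b_d(N).$$
Inserting (\ref{Equation_Maximally_Hypoelliptic_Systems}), i.e. $\|F\|_{W^{r,2}_X}\leq C'(\|\mathcal{D}F\|_{L^2}+\|F\|_{L^2})$, we absorb $2C'$ raised to the exponent into the constant. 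This constant depends on $\G,\Gamma,\mu,l,r$, on the $X_i$, and on $\mathcal{D}$ through $C'$. Since $r$ and the $X_i$ are part of the data of $\mathcal{D}$, the dependence is as claimed.

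For $m_{d,\delta}(F)$ we argue the same way starting from $m_{d,\delta}(F)\leq \mathcal{N}_{d,\frac{\delta}{2}}(-|F|)$. This yields the same bound plus $b_d(N)$, whereas the statement has no additive Betti term, so that term must be removed. The cleanest route is a case split.

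\emph{Case A:} $\|\mathcal{D}F\|_{L^2}+\|F\|_{L^2}\geq c\,\delta$ for a fixed small $c>0$. Then the quantity raised to the power $\frac{2Q}{2r-Q}$ is at least $c^{\frac{2Q}{2r-Q}}$. Hence $b_d(N)$ is dominated by a constant multiple of that power and can be absorbed into $C$.

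\emph{Case B:} $\|\mathcal{D}F\|_{L^2}+\|F\|_{L^2}< c\,\delta$. Combining coercivity with the sup-norm Sobolev embedding $\|F\|_{C^0}\leq C''\|F\|_{W^{r,2}_X}$, valid since $2r>Q$, gives $\max|F|\leq C'C''c\,\delta<\delta$ once $c$ is small. Then $\{|F|>\delta\}=\emptyset$, so $m_{d,\delta}(F)=0$ and the bound holds trivially.

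The main obstacle is this sup-norm embedding on $N$. I expect it either to be established in the proof of Theorem \ref{Thm:Sobolev_Nilmanifolds}, or to follow from Folland--Stein type anisotropic embeddings lifted to the compact quotient. If it is not available, an alternative is to observe that the infinite bars of $-|F|$ are born at $-\max|F|$. These bars could contribute to $m_{d,\delta}$ only when $\max|F|>\delta$, and that case is again Case A.
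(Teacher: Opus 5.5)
Your proposal is correct and matches the paper's argument: you apply Theorem \ref{Thm:Sobolev_Nilmanifolds} with $p=2$, $k=r$, combine it with Proposition \ref{Proposition_Counts_Comparison} and (\ref{Equation_Maximally_Hypoelliptic_Systems}), and remove the $b_d(N)$ term from the nodal bound by a case split based on the sup-norm embedding, which the paper already has as Corollary \ref{Corollary_Basic_Sobolev}. The only cosmetic difference is that the paper splits on $\delta$ versus $C_0\|F\|_{W^{r,2}_X}$ before invoking coercivity, rather than on $\|\mathcal{D}F\|_{L^2}+\|F\|_{L^2}$ versus $c\,\delta$.
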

In order to make a connection with Theorems \ref{Thm:Courant_Nilmanifolds} and \ref{Theorem_Bezout_Main}, we recall that the powers of the sub-Laplacian are maximally hypoelliptic, see Theorem \ref{Theorem_Subelliptic_Estimates}. Applying Corollary \ref{Corollary_Maximally_Hypoelliptic} to $\mathcal{D}=P^{\kappa}, \kappa > \frac{Q}{4}$ an integer, yields Theorems \ref{Thm:Courant_Nilmanifolds} and \ref{Theorem_Bezout_Main} for every even $k>\frac{Q}{2}$ ($k=2\kappa$).

\subsection{General equiregular structures}\label{Intro_Equiregular}

Let $(M,\xi, g)$ be a closed $n$-dimensional sub-Riemannian manifold with an equiregular structure $\xi$, $s$ the step of $\xi$, $Q$ its homogeneous dimension and $\mu$ a Borel measure on $M$ with a smooth positive density.  Let $P$ be the sub-Laplacian with respect to $\mu.$ As before, denote by $\mathcal{F}_\lambda$ the space of all linear combinations of eigenfunctions of $P$ with eigenvalues at most $\lambda.$ We conjecture the following version of the coarse Courant's theorem.

\begin{conj}\label{Conjecture_Courant}
Let $k> \frac{Q}{2}$ be an integer and $0\leq d \leq n-1.$ For every $F=(f_1,\ldots, f_l)$, $f_1,\ldots,f_l\in \mathcal{F}_\lambda$, $\|F\|_{L^2}=1$ and all $\delta>0$ it holds that
\begin{equation}\label{Equation_Conj_Courant}
m_{d,\delta} (F) \leq \frac{C}{\delta^{\frac{Q}{k}}} \left( \lambda+1 \right)^{\frac{Q}{2}},
\end{equation}
where $C$ depends only on $(M,\xi, g),\mu, k$ and $l.$
\end{conj}

A couple of remarks are in order. In the case of a Riemannian manifold $(M,g)$, i.e. when $\xi=TM$, Conjecture \ref{Conjecture_Courant} was settled in \cite[Theorem 1.1]{BPPPSS22}, see \cite{PolSod07,PPS19} for precursors. Furthermore, it was shown that in this case, the orders of $\lambda$ and $\delta$ in (\ref{Equation_Conj_Courant}) are almost sharp, see \cite[Theorem 1.10]{BPPPSS22} and \cite[Remark 7.5]{BPPPSS22} for precise statements, as well as \cite[Example 1.4.3]{PPS19}, \cite[Example 1.4.10]{PPS19} and \cite[Proposition 1.4.11]{PPS19}.

On the other hand, in the case of nilmanifolds, Theorem \ref{Thm:Courant_Nilmanifolds} provides strong evidence in support of Conjecture \ref{Conjecture_Courant}. Indeed,  the order of $\lambda$ in Theorem \ref{Thm:Courant_Nilmanifolds}  is $\frac{Q}{2}(1+\varepsilon_k)$ and $\varepsilon_k \to 0$ as $k \to \infty.$ Thus, by picking a sufficiently large $k$, we can make the order of $\lambda$ differ from the conjectured one by an arbitrarily small multiplicative factor. Similarly, the order of $\delta$ is $-2 \varepsilon_k= -\frac{Q}{k-\frac{Q}{2}} $ and $\frac{\frac{Q}{k}}{2\varepsilon_k}\to 1$ as $k \to \infty.$

Let us also note the order of $\lambda$ in the sub-Riemannian Courant estimate (\ref{Equation_Courant_Sub}) agrees with the order of $\lambda$ in the conjectured coarse estimate (\ref{Equation_Conj_Courant}).  As further evidence towards Conjecture \ref{Conjecture_Courant}, we prove the following, non-optimal, bound.

\begin{thrm}\label{Thm:Spectral_RotStein}
Let $k> \frac{Q}{2}$ be an integer and $0\leq d \leq n-1.$ For every $F=(f_1,\ldots, f_l)$, $f_1,\ldots,f_l\in \mathcal{F}_\lambda$, $\|F\|_{L^2}=1$ and all $\delta>0$ it holds that
$$m_{d,\delta} (F) \leq \frac{C}{\delta^\frac{n}{k}} (\lambda+1)^{\frac{sn}{2}}, $$
where $C$ only depends on $(M,\xi, g),\mu, k$ and $l.$
\end{thrm}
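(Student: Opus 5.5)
Theorem \ref{Thm:Spectral_RotStein} follows by comparing anisotropic and isotropic Sobolev norms and then invoking the Riemannian machinery of \cite{BPPPSS22}. We fix an auxiliary Riemannian metric $h$ on $M$ and let $W^{j,2}$ denote the usual (isotropic) Sobolev spaces on $(M,h)$. The plan has three steps:
\begin{enumerate}
\item Use the Riemannian Sobolev estimate on barcodes to bound $m_{d,\delta}(F)$ by an isotropic Sobolev norm of $F$.
\item Bound that isotropic norm by a power of the sub-Laplacian, via the subelliptic estimate of Rothschild--Stein type.
\item Bound the sub-Laplacian powers using the spectral condition on $F$.
\end{enumerate}
The mismatch between isotropic and anisotropic regularity costs a factor of $s$ in the exponent of $\lambda$. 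This explains why the exponent is $\frac{sn}{2}$ rather than the conjectured $\frac{Q}{2}$.

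For step (1) we take the Riemannian result, \cite[Theorem 1.1]{BPPPSS22} and its Sobolev--barcode version, together with Proposition \ref{Proposition_Counts_Comparison}. For an integer $k>\frac n2$ and every $0\le d\le n-1$ this gives
$$m_{d,\delta}(F)\le \mathcal N_{d,\delta/2}(-|F|)\le C\Big(\frac{\|F\|_{W^{k,2}}}{\delta}\Big)^{\frac{n}{k-n/2}}.$$
A form with the exponent $\frac{n}{k}$ on $\delta$ is needed here. Working with $W^{k,2}$ for $k>\frac n2$ would produce the exponent $\frac{2n}{2k-n}$, which is the wrong shape. I therefore expect that the relevant Riemannian statement is the one used in \cite{BPPPSS22} for Laplace eigenfunctions. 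There the bound $\|F\|_{W^{j,2}}\lesssim (\lambda'+1)^{j/2}$ holds for all $j$, and optimizing over $j$ yields $m_{d,\delta}\le C\delta^{-n/k}(\lambda'+1)^{n/2}$. The idea is thus to reduce to exactly the hypothesis used there: control of all $k$-th isotropic derivatives at rate $(\lambda'+1)^{k/2}$ with $\lambda'$ effectively equal to $\lambda^s$.

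For step (2), each isotropic derivative of order $1$ is a combination of brackets of length at most $s$ of the $X_i$. We will use the subelliptic estimate $\|u\|_{W^{\sigma/s,2}}\le C(\|P^{\sigma/2}u\|_{L^2}+\|u\|_{L^2})$, which is Theorem \ref{Theorem_Subelliptic_Estimates} combined with the Rothschild--Stein gain of $1/s$ derivatives. Applied with $\sigma=sj$, it gives $\|F\|_{W^{j,2}}\le C(\|P^{sj/2}F\|_{L^2}+\|F\|_{L^2})$.

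For step (3), since $f_i\in\mathcal F_\lambda$ and $\|F\|_{L^2}=1$, we get $\|P^{sj/2}F\|_{L^2}\le(\lambda+1)^{sj/2}$. Substituting into the Riemannian argument with $\lambda'+1=(\lambda+1)^s$ gives $\delta^{-n/k}(\lambda+1)^{sn/2}$. The requirement becomes $k>\frac n2$, which is implied by $k>\frac Q2$ since $Q\ge n$.

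The main obstacle is step (1): extracting from \cite{BPPPSS22} a clean black-box statement whose input is only a family of $W^{j,2}$ bounds. That argument uses a Fourier or spectral cutoff of $F$ at frequency $\sim\lambda'$, obtained by splitting $F$ into a low-frequency part and a $C^0$-small remainder. I would redo that splitting using the Laplacian of $h$ as follows:
\begin{itemize}
\item truncate $F$ spectrally at the $h$-frequency $\Lambda$;
\item bound the $C^0$ norm of the tail by Sobolev embedding and the $W^{k,2}$ bound $\lesssim(\lambda+1)^{sk/2}$;
\item choose $\Lambda$ so that the tail is below $\delta/4$;
\item count bars of the truncation by the Riemannian Courant-type bound $C\Lambda^{n/2}$;
\item conclude with the stability of barcodes under $C^0$ perturbations.
\end{itemize}
Carrying this out should give precisely the claimed exponents, with $\Lambda\sim(\lambda+1)^s\delta^{-2/k}$.
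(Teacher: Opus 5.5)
\textbf{The gap is in step (1), and it carries through the rest of the proposal.} You take the Riemannian Sobolev--barcode estimate of \cite{BPPPSS22} to have only the single-scale exponent $\frac{n}{k-n/2}$. In fact \cite[Theorem 1.12]{BPPPSS22}, with Remarks 1.13--1.14, is proved by multiscale polynomial approximation and already has exponent $\frac{n}{k}$ whenever $kp>n$. It is the same input used for Theorem \ref{Thm:Sobolev_RotStein}. The Riemannian coarse Courant bound follows from it at one fixed $k$, with no optimization over $j$.

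This is exactly what the paper uses. Take $p=2$ and $k>\frac{Q}{2}\geq\frac{n}{2}$.
\begin{itemize}
\item First, $m_{d,\delta}(F)\le \widetilde{C}(\|F\|_{W^{k,2}}/\delta)^{n/k}$, with the $+b_d(M)$ term removed as in the proof of Theorem \ref{Thm:Courant_Nilmanifolds}.
\item Next, $\|F\|_{W^{k,2}}\le C\|F\|_{W^{sk,2}_X}$ by Proposition \ref{Proposition_Sobolev_Sobolev}.
\item Then $\|F\|_{W^{sk,2}_X}\le C(\sum_i\|(1+P)^{sk/2}f_i\|_{L^2}^2)^{1/2}\le C(1+\lambda)^{sk/2}$, by Theorem \ref{Theorem_Subelliptic_Estimates} and the spectral expansion.
\end{itemize}
Together these give $C\delta^{-n/k}(1+\lambda)^{sn/2}$. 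Your steps (2) and (3) are precisely the last two estimates, so with the correct step (1) you are done in one line.

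\textbf{The spectral-truncation workaround you propose in its place does not give the stated bound.}
\begin{itemize}
\item There is no $\delta$-independent count $C\Lambda^{n/2}$ of bars for the truncation $F_{\le\Lambda}$. The Riemannian coarse Courant bound carries a factor $\delta^{-n/k'}$, and by \cite{BLS20} no bound uniform in $\delta$ can exist.
\item The $C^0$ tail estimate loses half the dimension. Sobolev embedding, or the local Weyl law for $\Delta_h$, only gives $\|F-F_{\le\Lambda}\|_{C^0}\lesssim\Lambda^{\frac{n}{4}-\frac{k}{2}}\|F\|_{W^{k,2}}\lesssim\Lambda^{\frac{n}{4}}((\lambda+1)^s/\Lambda)^{k/2}$.
\item With your choice $\Lambda\sim(\lambda+1)^s\delta^{-2/k}$, the available bound on the tail is therefore of order $\delta^{1-\frac{n}{2k}}(\lambda+1)^{\frac{sn}{4}}$. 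This is not below $\delta/4$.
\item Choosing $\Lambda$ so that the tail really is small forces $\Lambda^{n/2}\gtrsim\delta^{-\frac{2n}{2k-n}}(\lambda+1)^{\frac{skn}{2k-n}}$. That is precisely the single-scale shape you were trying to avoid, and it still has to be multiplied by the $\delta$-dependent count.
\item Even using the bounds for all $j$ only yields $(\lambda+1)^{\frac{sn}{2}(1+\epsilon)}$.
\end{itemize}
So the missing ingredient is the multiscale Sobolev--barcode estimate with exponent $\frac{n}{k}$. Once you use it, the truncation is unnecessary.
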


Theorem \ref{Thm:Spectral_RotStein} is proven by applying sub-elliptic estimates to the usual Sobolev bound on $m_{d,\delta} (F)$ obtained in \cite{BPPPSS22}. In the Riemannian case, i.e. when $\xi=TM,s=1$, Theorem \ref{Thm:Spectral_RotStein} recovers the optimal bound obtained in \cite{BPPPSS22}. However, in the genuinely sub-Riemannian case, $s>1$ which implies that $sn>Q$ and the order of $\lambda$ in Theorem \ref{Thm:Spectral_RotStein} is always strictly greater than the one in Conjecture \ref{Conjecture_Courant}.  This indicates that proving Conjecture \ref{Conjecture_Courant} requires genuinely sub-Riemannian modifications of the method used in \cite{BPPPSS22} (as does our proof of Theorem \ref{Thm:Courant_Nilmanifolds}).

We will now outline a potential strategy for proving Conjecture \ref{Conjecture_Courant} based on the ideas developed in \cite{BPPPSS22} and the present paper.  To this end, first notice that we obtained Theorem \ref{Thm:Courant_Nilmanifolds} as a corollary of the anisotropic Sobolev barcode estimate given by Theorem \ref{Thm:Sobolev_Nilmanifolds}. Similarly, the Riemannian coarse nodal estimate proven in \cite{BPPPSS22} is a consequence of a Sobolev barcode estimate given by \cite[Theorem 1.12]{BPPPSS22}. We conjecture a common generalization of both of these results.

Let $M$ be a closed manifold, $\xi$ an equiregular distribution on $M$, $Q$ its homogeneous dimension and $\mu$ a Borel measure on $M$ with a smooth positive density.  There exists a finite collection of smooth vector fields $X_1,\ldots , X_m$ on $M$ such that for every $x\in M$, $\xi_x=Span_\R(X_1(x),\ldots, X_m(x)),$ see Section \ref{Section_Sub-Riemannian} for details. We call such vector fields \textit{generating for $\xi.$} The anisotropic Sobolev norm $\| \cdot \|_{W^{k,p}_X}$ with respect to $X_1,\ldots , X_m$ is defined in the same way as in (\ref{eq:Maps_anisotropic_Sobolev}).

\begin{conj}\label{Conjecture_Sobolev}
Let $p\geq 1, l\geq 1$ an integer, $k$ an integer such that $kp>Q$ and $0\leq d \leq n-1.$ For every $F\in C^{\infty}(M;\R^l) $ and all $\delta>0$ it holds that
$$\mathcal{N}_{d,\delta} (\pm|F|) \leq C \left( \frac{\|F \|_{W^{k,p}_X} }{\delta} \right)^{\frac{Q}{k}}+b_d(M),$$
where $C$ depends only on $(M,\xi),\mu,l,k,p$ and the generating vector fields $X_1, \ldots , X_m.$ 
\end{conj}

If proven, Conjecture \ref{Conjecture_Sobolev} would readily imply Conjecture \ref{Conjecture_Courant} via subelliptic estimates. The progress towards Conjecture \ref{Conjecture_Sobolev} is parallel to the progress towards Conjecture \ref{Conjecture_Courant}. It was settled in the Riemannian case in \cite{BPPPSS22}. In the case of sub-Riemannian nilmanifolds, Theorem \ref{Thm:Sobolev_Nilmanifolds} provides an estimate with a worse order of $\frac{ \|F \|_{W^{k,p}_X} }{\delta}.$ Nevertheless, the order of $\frac{ \|F \|_{W^{k,p}_X} }{\delta}$ in Theorem \ref{Thm:Sobolev_Nilmanifolds} is $\frac{Qp}{kp-Q}$, which for sufficiently large $k$ becomes arbitrarily close to the conjectured order of $\frac{Q}{k}$, namely for each $p\geq 1$, $\frac{\frac{Qp}{kp-Q}}{\frac{Q}{k}}\to 1$ as $k\to \infty.$

Similarly to Theorem \ref{Thm:Spectral_RotStein}, a direct application of subelliptic estimates to \cite[Theorem 1.12]{BPPPSS22} yields the following non-optimal result.

\begin{thrm}\label{Thm:Sobolev_RotStein}
Let $p> 1, l\geq 1$ an integer, $k$ an integer such that $kp>n$ and $0\leq d \leq n-1.$ For every $F\in C^{\infty}(M;\R^l) $ and all $\delta>0$ it holds that
$$\mathcal{N}_{d,\delta} (\pm|F|) \leq C \left( \frac{\|F \|_{W^{sk,p}_X} }{\delta} \right)^{\frac{n}{k}}+ b_d(M),$$
where $C$ depends only on $(M,\xi),\mu,l,k,p$ and the generating vector fields $X_1, \ldots , X_m.$ 
\end{thrm}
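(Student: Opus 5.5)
The plan is to reduce Theorem \ref{Thm:Sobolev_RotStein} to the Riemannian Sobolev barcode estimate \cite[Theorem 1.12]{BPPPSS22}, combined with a subelliptic comparison of norms. Fix an auxiliary Riemannian metric on $M$. By \cite[Theorem 1.12]{BPPPSS22}, for $p>1$ (or $p\geq 1$), $kp>n$, every $0\leq d\leq n-1$ and $F\in C^\infty(M;\R^l)$, we have
$$\mathcal{N}_{d,\delta}(\pm|F|)\leq C\left(\frac{\|F\|_{W^{k,p}}}{\delta}\right)^{\frac{n}{k}}+b_d(M),$$
where $\|\cdot\|_{W^{k,p}}$ is the usual isotropic Sobolev norm. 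The barcode count and $b_d(M)$ are intrinsic to $|F|$ and $M$, so it only remains to show
$$\|F\|_{W^{k,p}}\leq C'\|F\|_{W^{sk,p}_X},$$
with $C'$ depending on $(M,\xi),\mu,k,p$ and $X_1,\ldots,X_m$. Substituting this and absorbing $C'^{n/k}$ into $C$ gives the statement.

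To prove the norm comparison, I will argue componentwise on each $f_i$ and use a finite partition of unity subordinate to coordinate charts. Since $\xi$ is bracket generating of step $s$, near each point the coordinate vector fields $\partial_j$ can be written as smooth-coefficient combinations of iterated commutators $[X_{i_1},[\ldots,X_{i_r}]\ldots]$ with $r\leq s$. An iterated commutator of length $r$ expands as a sum of compositions of exactly $r$ fields $X_i$. Hence each $\partial_j$ is a combination, with smooth coefficients, of $X$-words of length at most $s$.

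By induction, any coordinate derivative $\partial^\alpha$ with $|\alpha|\leq k$ is then a combination, with smooth coefficients, of $X$-words of length at most $s|\alpha|\leq sk$. Commuting the smooth coefficients past the $X_i$ produces only lower-order $X$-words, which are again of length at most $sk$. This yields
$$\|\partial^\alpha(\chi f)\|_{L^p}\leq C\|f\|_{W^{sk,p}_X}$$
for each cutoff $\chi$. Summing over charts gives the comparison for every $1\leq p\leq\infty$, with no need for genuinely subelliptic (Rothschild–Stein) estimates. For $p=2$ one could alternatively cite the sharper estimate $\|f\|_{H^{k/s}}\lesssim\|f\|_{W^{k,2}_X}$, but the crude algebraic bound is enough here.

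The main obstacle is bookkeeping rather than analysis. First, the local commutator representation must hold with uniform smooth coefficients, which uses equiregularity together with the fact that the $X_i$ span $\xi$ pointwise: equiregularity lets us pick local frames of $\xi^{(r)}$ consisting of brackets of the $X_i$, and compactness gives uniform constants. Second, one must check that the hypotheses ($p>1$, $kp>n$) match those of \cite[Theorem 1.12]{BPPPSS22}.
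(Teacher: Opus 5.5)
Your proposal is correct and has the same skeleton as the paper's proof. You invoke \cite[Theorem 1.12]{BPPPSS22} for an auxiliary Riemannian metric; the paper also cites Remarks 1.13 and 1.14 there for the version with $\pm|F|$ and maps $F$. You then insert the comparison $\|F\|_{W^{k,p}}\leq C'\|F\|_{W^{sk,p}_X}$.

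The only real difference is how that comparison is obtained. The paper quotes it from Rothschild--Stein \cite{RS76} as Proposition~\ref{Proposition_Sobolev_Sobolev}, a result from subelliptic theory stated for $p>1$. You prove it by hand: write each $\partial_j$ locally as a smooth combination of brackets of length at most $s$, expand the brackets into $X$-words, and induct on $|\alpha|$. This is valid. When the number of anisotropic derivatives is an exact multiple $sk$ of the step, the loss of the factor $s$ is precisely the trivial algebraic loss, so no subelliptic analysis is needed. This is also why the resulting theorem is non-optimal.

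Your route buys self-containedness and slightly more generality:
\begin{itemize}
\item The comparison holds for all $1\leq p\leq\infty$. So the hypothesis $p>1$, which in the paper enters only through Rothschild--Stein, could be relaxed to $p\geq 1$, provided the cited Riemannian estimate holds there (as the introduction indicates).
\item It does not actually use equiregularity. Bracket generation at each point, together with continuity and compactness, already yields local frames of $TM$ made of brackets of length at most $s$.
\end{itemize}
The paper's route is simply shorter.

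One point you leave implicit, which the paper states: the $L^p$ norms in $W^{k,p}$ and $W^{sk,p}_X$ are taken with respect to the Riemannian volume and $\mu$, respectively. You need their equivalence on the compact manifold $M$, and this is where the dependence of the constant on $\mu$ enters.
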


\begin{rem}
Theorem \ref{Thm:Sobolev_RotStein} also implies non-optimal coarse B\'{e}zout's theorem on equiregular sub-Riemannian manifolds, analogous to Theorem \ref{Theorem_Bezout_Main}. If proven, Conjecture \ref{Conjecture_Sobolev} would imply the optimal coarse B\'{e}zout's bound in this setting.
\end{rem}

Lastly, let us comment on the discrepancy between our results and Conjecture \ref{Conjecture_Sobolev}.  There are three, essentially independent, technical obstacles, which we will now describe. Overcoming each of them would generalize Theorem \ref{Thm:Sobolev_Nilmanifolds} towards Conjecture \ref{Conjecture_Sobolev}, while overcoming all three would prove Conjecture \ref{Conjecture_Sobolev}.

\textbf{Polynomial approximation:} Most of the arguments used in this paper do not rely on the group structure of the Carnot group and, in fact, generalize to closed equiregular sub-Riemannian manifolds. The main exception is a result about local polynomial approximation given by Theorem \ref{Theorem_Polynomial_Approximation}, whose analogue for general equiregular structures, to the best of our knowledge, does not exist. Since Theorem \ref{Theorem_Polynomial_Approximation} is local in nature and Carnot groups serve as local models for the general equiregular structures, we believe it is plausible that a suitable generalization of Theorem \ref{Theorem_Polynomial_Approximation} can be proven. Assuming such a generalization, we expect that the arguments used in this paper would yield an analogue of Theorem \ref{Thm:Sobolev_Nilmanifolds} (and hence also Theorems \ref{Thm:Courant_Nilmanifolds} and \ref{Theorem_Bezout_Main}) in the equiregular setting.

\textbf{Multiscale divisions:} Generalizing Theorem \ref{Thm:Sobolev_Nilmanifolds} to the equiregular setting would still not completely settle Conjecture \ref{Conjecture_Sobolev}, due to the discrepancy in the power of $\frac{ \|f \|_{W^{k,p}_X} }{\delta}$ ($\frac{Qp}{kp-Q}$ as opposed to $\frac{Q}{k}$). In order to bridge this gap in the Riemannian case, in \cite{BPPPSS22} polynomial approximation is applied on multiple scales. The multiscale argument relies on a hierarchy of self-similar subdivisions of cubes in $\R^n$ given by dyadic partitions. We are not aware of a sub-Riemannian analogue of such a hierarchy.

\textbf{Covers with controlled intersections:} The arguments we used in this paper are not directly applicable to the intermediate homological degrees, i.e. $1\leq d \leq n-2$, even in the case of nilmanifolds.  This is due to the fact that they rely on an improved subadditivity theorem for barcodes, which only holds in degree zero, see Theorem \ref{Therem_Subadditivity_Deg_0} and Remark \ref{Remark_Subadditivity_Higher_Deg} (the case $d=n-1$ is covered by the case $d=0$ via Proposition \ref{Proposition_Duality_Barcodes}).  A potential strategy for overcoming this issue would be to use the subadditivity result in all degrees, which has been proven in \cite{BPPPSS22}. In order to do so, one would need to control the geometry of the intersections of the sets in the cover used in the proof of Theorem \ref{Thm:Sobolev_Nilmanifolds}.

\section*{Funding}

During the writing of the article I. S.-R.  was supported by the doctoral scholarship of the Institut des Sciences Math\'{e}matiques. V.S. is a member of the Oxford/Max Planck collaboration and this research was funded in whole or in part by EPSRC international centre to centre collaboration grant EP/Z531224/1. During the writing of the article V. S. was also supported by the CRM-ISM postdoctoral fellowship and Fondation Courtois. V. S. would also like to thank the Isaac Newton Institute for Mathematical Sciences, Cambridge, for support and hospitality during the programme Geometric spectral theory and applications, where work on this paper was undertaken. This work was supported by EPSRC grant EP/Z000580/1.

\section*{Acknowledgments}

We cordially thank Iosif Polterovich for numerous discussions throughout the whole process of writing this article. These discussions helped improve our overall understanding of the subject, as well as the exposition. This project has been inspired by a question of Bernard Helffer. In addition to this, we are very grateful to him for numerous useful comments and especially for sharing his expertise on hypoelliptic operators.  We thank Denis Vinokurov for helpful discussions. This research is part of the first named author Ph.D. thesis at the Universit\'{e} de Montr\'{e}al. 

\section{Topological persistence}\label{Section_TDA}

In this section we present necessary background about topological persistence. This is an active field of research and recently a number of comprehensive introductory texts have appeared, see \cite{Oudot15,CdSGO16,PRSZ20}. Our aim is to make the exposition self-contained by collecting the definitions and results we will use, while referring the reader to Sections 2, 3 and 4 of \cite{BPPPSS22} for complete proofs.  We only present the full proofs of the results which cannot be found in \cite{BPPPSS22}. 

\subsection{Persistence modules and barcodes}

Let $\F$ be a fixed field.

\begin{defn}\label{Definition_Pers_Mod}
A persistence module $(V,\pi)$ is a family of vector spaces $\{V_t \}_{t\in \R}$ together with a family of linear maps $\pi_{s,t}: V_s \to V_t$ defined for all $s\leq t$ such that
\begin{enumerate}
\item $(\forall~ t\in \R)~\pi_{t,t}=id_{V_t};$
\item $(\forall~ r\leq s \leq t)~\pi_{s,t}\circ \pi_{r,s} =\pi_{r,t}.$
\end{enumerate}
\end{defn}

The following is the main example which we will consider throughout the paper.

\begin{exm}\label{Example_Functions_Pers}
Let $X$ be a compact Hausdorff topological space and $f:X\to \R$ a continuous function. Denote by $\check{H}_k(\cdot)$ the degree-$k$ \v{C}ech homology with coefficients in $\F.$ For all $t\in \R$ let $V_k(f)_t = \check{H}_k(\{ f\leq t \}).$ If $s\leq t$ then $\{ f\leq s \} \subset \{f \leq t \}$ and maps $\pi_{s,t} : \check{H}_k(\{f\leq s \} )\to  \check{H}_k(\{f\leq t \} ) $ induced by inclusions render $V_k(f)$ a persistence module.
\end{exm}

\begin{rem}
The choice of  \v{C}ech homology in Example \ref{Example_Functions_Pers} is made for technical reasons, due to the fact that this homology theory has the Mayer-Vietoris sequence for each pair of compact subsets of $X.$ One may define a different persistence module associated to $f$ by considering singular homology of sublevel sets instead. This choice will not make a difference in the end result since, when $X$ is a compact manifold, the bar-counting invariant $\mathcal{N}_\delta$ does not depend on it, see \cite[Proposition 2.12]{BPPPSS22} or \cite{Schmahl25}.
\end{rem}

Let $(V,\pi^V)$, $(W,\pi^W)$ be two persistence modules. A \textit{ morphism of persistence modules} $\phi:(V,\pi^V) \to (W,\pi^W)$ is a family of linear maps $\phi_t:V_t \to W_t$, defined for all $t\in \R$, such that for all $s\leq t$ it holds that $\pi^W_{s,t} \circ \phi_s = \phi_t \circ \pi^V_{s,t}.$ This morphism is an isomorphism if $\phi_t$ is an isomorphism for all $t\in \R.$ The direct sums, direct products, $\ker \phi$, $\im \phi,$ etc. are defined in the same spirit, by considering the relevant notion on vector spaces $V_t, W_t$ for each $t\in \R.$ 

From a more conceptual viewpoint, these definitions can be expressed in the following way. Let $(\R, \leq)$ denote the category whose objects are real numbers and which has a unique morphism $s\to t$ whenever $s\leq t.$ A persistence module $V$ is a functor $V:(\R,\leq) \to \Vect_{\F}$ into the category of $\F$-vector spaces. A morphism of persistence modules is a natural transformation of functors. In other words, the category of persistence modules is a functor category $ \Vect_{\F}^{(\R,\leq)}$ , which is abelian, with the abelian structure induced from the target category $\Vect_{\F}.$

The notion of a persistence module given by Definition \ref{Definition_Pers_Mod} is a bit too general to have a sufficiently rich theory.  To this end, we restrict our attention to a class of \textit{$q$-tame persistence modules}, given by the following definition.

\begin{defn}
A persistence module $(V,\pi)$ is called \textit{$q$-tame} if for every $s<t$ the map $\pi_{s,t}$ has finite rank.
\end{defn}

The assumption of $q$-tameness is essential and most of the theory which we will present exists, in some form, at this level of generality. Nevertheless, in order to simplify the exposition we impose additional assumptions which will be satisfied by all persistence modules considered in this paper.

\begin{defn}
A persistence module $(V,\pi)$ is called \textit{upper semi-continuous} if for every $t\in \R$ the canonical map $V_t\to \lim_{s>t} V_s$ is an isomorphism.  Here, the inverse limit is taken with respect to the structure maps $\pi.$
\end{defn}

Upper semi-continuous, $q$-tame, persistence modules have a canonical decomposition into simple pieces, which we will now describe. Let $I\subset \R$ be an interval. The \textit{interval module} $(\F_I, \pi^{\F_I})$ is a persistence module given by
$$(\F_I)_t =\begin{cases}
      \F, & \text{if}\ t\in I \\
      0, & \text{otherwise} \\
      \end{cases} ~~~~~,~~~~~
   \pi^{\F_I}_{s,t}=\begin{cases}
      id_{\F}, & \text{if}\ s,t\in I \\
      0, & \text{otherwise} \\
      \end{cases}.$$
Recall that a \textit{multiset} is a set in which elements may appear multiple times. The number of appearances of an element in a multiset is called its \textit{multiplicity.} 

\begin{defn}
A \textit{barcode} $\B$ is a multiset of intervals in $\R$ with finite multiplicities. The intervals in $\B$ are called \textit{bars}.
\end{defn}

\begin{thrm}[Structure theorem]\label{Theorem_Structure_PM}
Let $(V,\pi)$ be an upper semi-continuous, $q$-tame persistence module. There exists a unique barcode $\B(V)$ such that
$$V \cong \prod_{I\in \B(V)} (\F_I, \pi^{\F_I}) .$$
Moreover, each bar in $\B(V)$ is either of the form $(-\infty, b)$ for $-\infty<b\leq +\infty$ or $[a,b)$ for $-\infty <a<b\leq +\infty.$
\end{thrm}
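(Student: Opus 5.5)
The plan is to follow the standard route to the structure theorem for q-tame upper semi-continuous modules (Crawley-Boevey; Chazal--de Silva--Glisse--Oudot), adapted to the upper semi-continuous setting so that only the interval types $(-\infty,b)$ and $[a,b)$ appear. There are three steps: decompose $V$ up to ephemeral parts, show that upper semi-continuity forbids the other interval shapes, and prove uniqueness of the barcode by counting ranks.

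\emph{Step 1: construct the bars.} For a potential bar $\langle a,b\rangle$, take its multiplicity to be the dimension of a subquotient of $V_t$ for $t$ inside the bar, built from images and kernels of the structure maps. Concretely, for $a<t<b$ consider
\[
\operatorname{Im}^+_a=\bigcap_{s<a}\im \pi_{s,t}\quad\text{or}\quad \bigcup_{s\geq a}\im\pi_{s,t},
\]
together with the analogous kernel subspaces $\ker^\pm_b$, and form the Crawley-Boevey functor $V^+_{I}/V^-_I$. By q-tameness, every $\pi_{s,t}$ with $s<t$ has finite rank. Hence each such subquotient is finite dimensional whenever the left endpoint is finite or strictly interior, so multiplicities are finite. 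For the left-infinite bars, one uses $\bigcap_{s}\im\pi_{s,t}$.

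\emph{Step 2: rule out the other shapes.} Upper semi-continuity says $V_t\cong\lim_{s>t}V_s$, so a vector in $V_t$ that dies in every $V_s$ with $s>t$ is zero. This excludes right-closed bars $\langle a,b]$ with $b<\infty$. Dually, a compatible family in $V_s$ for $s>a$ comes from $V_a$, which makes every finite left endpoint closed, so bars $(a,b\rangle$ cannot occur. One then chooses, for each pair $(a,b)$, a complement basis of the relevant subquotient and lifts it to $V$ to obtain a morphism $\prod_I \F_I\to V$. The product, rather than the direct sum, is used because infinitely many bars may meet a given $t$ only through limits; q-tameness guarantees that only finitely many bars contain any $[s,t]$ with $s<t$.

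\emph{Main obstacle.} Proving that this morphism is an isomorphism is the hard part. In a q-tame but not pointwise finite dimensional module, $V_t$ itself can be infinite dimensional. My first approach is to reduce to pointwise finite dimensional modules by replacing $V$ with images $\im(V_{t-\epsilon}\to V_t)$, apply Crawley-Boevey's theorem for pointwise finite modules, and then pass to $\epsilon\to0$ using upper semi-continuity and an inverse limit argument (Mittag-Leffler holds thanks to the finite ranks). This limiting step is where I expect the real difficulty.

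\emph{Uniqueness.} The multiset is determined by the ranks $\rank\pi_{s,t}$ for $s<t$, which count the bars containing $[s,t]$. An inclusion--exclusion over four parameters, $r(a',b)-r(a',b')-r(a,b)+r(a,b')$, recovers the multiplicities of bars with endpoints in given rectangles. Since endpoint types are fixed by Step 2, these counts determine the barcode.
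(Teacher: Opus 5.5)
The paper does not prove this statement; it quotes it as Theorem 1.3 of \cite{Schmahl22}, whose proof rests on a result of Chazal, Crawley-Boevey and de Silva \cite{CCBdS16} on radicals. Two parts of your plan are fine. Uniqueness works: on $\prod_I \F_I$ the rank of $\pi_{s,t}$, $s<t$, is the finite number of bars containing $[s,t]$, and your four-term inclusion--exclusion with limits recovers the multiplicities once endpoint types are fixed. The shape restriction is immediate \emph{once} a product decomposition exists, since limits commute with products and $\F_I$ is upper semi-continuous exactly when $I=(-\infty,b)$ or $I=[a,b)$.

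The gap is existence, which you yourself leave open, and the outline you give for it does not work as stated. First, the morphism $\prod_I\F_I\to V$ ``obtained by lifting complement bases'' is not defined. An element of $\prod_{I\ni t}\F$ generally has infinitely many nonzero coordinates (e.g.\ for $V=\prod_{n\geq 1}\F_{[0,1/n)}$ at $t=0$), and $\sum_I x_I v_{I,t}$ has no meaning in $V_t$. It can only be made sense of through $V_t\cong\lim_{s>t}V_s$. Even then it needs lifts $v_{I,t}$ that are compatible in $t$, and choosing complements pointwise does not give that. This compatibility is exactly what Crawley-Boevey's section argument produces, and that argument uses pointwise finite-dimensionality. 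Second, your reduction does not reach $V$. The pfd submodules $W^\epsilon_t=\im\pi_{t-\epsilon,t}$ have non-canonical decompositions with no reason to be compatible as $\epsilon\to 0$. Their union is a directed colimit, not an inverse limit, and it equals the radical $(\operatorname{rad}V)_t=\bigcup_{s<t}\im\pi_{s,t}$. The radical misses every class born at a closed left endpoint: for $V=\F_{[0,1)}$ it vanishes at $t=0$. Separately, your $\bigcap_{s<a}\im\pi_{s,t}$ does not depend on $a$, since images grow with $s$. The relevant pair for a closed left endpoint $a$ is $\bigcup_{s<a}\im\pi_{s,t}\subseteq\im\pi_{a,t}$.

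The missing idea is to separate the two limits, in three steps.
(i) For q-tame $V$, $\operatorname{rad}V\cong\bigoplus_J\F_J$ is a direct sum of interval modules \cite{CCBdS16}. This is exactly the compatibility problem you defer.
(ii) If $V$ is upper semi-continuous, then $V_t\cong\lim_{s>t}V_s=\lim_{s>t}(\operatorname{rad}V)_s$. This holds because in any compatible family $(v_s)_{s>t}$ one has $v_s\in\im\pi_{s',s}$ for every $t<s'<s$.
(iii) Only finitely many $J$ contain a given $[s',s]$ with $s'<s$. Hence $\lim_{s>t}\bigoplus_J(\F_J)_s\cong\prod_{J:\,\inf J\leq t<\sup J}\F$, naturally in $t$. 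It follows that $V\cong\prod_J\F_{\hat J}$, where $\hat J=[\inf J,\sup J)$, or $\hat J=(-\infty,\sup J)$ if $\inf J=-\infty$.

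This gives existence and the shape statement at once. Your finite-rank/inverse-limit intuition does enter, but only in step (iii), after (i) has been supplied. Without (i) or an equivalent argument, your proposal does not prove the theorem.
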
  

The structure theorem is a central result in topological persistence. In modern formulation, it has been proven in increasing generality starting from \cite{ELZ02,ZC05}, with precursors going all the way back to the work of Marston Morse. The most commonly cited version of the result was proven in \cite{WCB15}.  In the above stated form, the structure theorem appears as Theorem 1.3 in \cite{Schmahl22}.

\begin{rem} The standard definition of a barcode of a persistence module assumes a direct sum decomposition as in Theorem \ref{Theorem_Structure_PM}, instead of the direct product one.\footnote{Recall that, in contrast to the direct product, the direct sum $\oplus_{i\in \mathcal{I}}V_i$ of vector spaces $V_i$ contains only the vectors $v$ for which all but finitely many coordinates $v_i \in V_i$ are zero.} This subtlety has been elaborated in detail in \cite{Schmahl22}. We stick to the less common direct product convention since it is more suitable for our purposes. Let us also mention that our interest is in the bar-counting invariant $\mathcal{N}_\delta$ which is blind to this distinction, as has been explained in \cite[Section 2]{BPPPSS22}, see also \cite{CCBdS16}.
\end{rem}

\subsection{Barcodes of functions}\label{Subsection_Barcodes_Functions}

Let $X$ be a compact Hausdorff topological space, denote by $C^0(X)$ the space of continuous maps from $X$ to $\R$ and let $f\in C^0(X).$ We wish to further examine the persistence module $V_k(f)$ given by Example \ref{Example_Functions_Pers}.  To this end, we first note that this module is always upper semi-continuous, see \cite[Example 3.3]{Schmahl22}.  Thus, in order to obtain the interval decomposition given by the structure theorem, we need $V_k(f)$ to be $q$-tame. We will now prove that this condition is satisfied for a large class of spaces $X$ and $f\in C^0(X),$ which cover all persistence modules that we will consider. When $V_k(f)$ is $q$-tame, we denote by $\B_k(f)$ the barcode of $V_k(f)$ and if $V_k(f)$ is $q$-tame for all $k\geq 0$, we denote by $\B(f)= \cup_k \B_k(f).$

An \textit{$n$-dimensional box} is a subset of $\R^n$ of the form $[a_1,b_1]\times \ldots \times [a_n,b_n]$ for some intervals $[a_i,b_i]\subset \R.$ We will call an $n$-dimensional box of unspecified dimension $n$ simply a \textit{box}.

\begin{prop}\label{Proposition_q-tame}
Assume that there exist compact subsets $A_1,\ldots ,A_l$ of $X$ such that $X=A_1\cup \ldots \cup A_l$ and each $A_i$ is homeomorphic to a box. Then, for every $f\in C^0(X)$, $V_0(f)$ is $q$-tame.
\end{prop}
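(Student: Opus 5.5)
Fix $s<t$. The goal is to show that $\pi_{s,t}:\check{H}_0(\{f\le s\})\to \check{H}_0(\{f\le t\})$ has finite rank. The plan is to bound this rank by the number of connected components of a suitable intermediate set, and to produce that set using uniform continuity on the boxes.

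For compact Hausdorff spaces, $\check{H}_0$ behaves like a "component-type" invariant. Concretely, whenever $K\subset U\subset L$ with $K,L$ compact and $U$ a finite union of sets, each contained in a single connected component of $L$, the map $\check{H}_0(K)\to\check{H}_0(L)$ has rank at most the number of those sets. The clean way to make this precise is to factor the inclusion through a finite cover. Suppose $K\subset C_1\cup\ldots\cup C_r\subset L$ with each $C_j$ compact and connected. Then the inclusion $K\hookrightarrow L$ factors through $C=C_1\cup\ldots\cup C_r$. Moreover $\check{H}_0(C)$ is spanned by the images of the $\check{H}_0(C_j)\cong\F$: for a compact union of finitely many connected compacta, $C$ has at most $r$ components and Čech $\check{H}_0$ of a compact space with finitely many components is $\F^{\#\text{components}}$. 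Hence the rank is at most $r$.

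The construction of the $C_j$ goes as follows. Fix homeomorphisms $h_i:B_i\to A_i$ from boxes $B_i$. Each $f\circ h_i$ is uniformly continuous on the compact box $B_i$. Choose $\eta>0$ such that $|f\circ h_i(x)-f\circ h_i(y)|<t-s$ whenever $\|x-y\|\le\eta$, for all $i$. Subdivide each $B_i$ into finitely many subboxes of diameter at most $\eta$. Let $C_1,\ldots,C_r$ be the images under $h_i$ of those subboxes that meet $h_i^{-1}(\{f\le s\})$. Each $C_j$ is compact and connected, being the homeomorphic image of a box. Since the $A_i$ cover $X$, we get $\{f\le s\}\subset\bigcup_j C_j$. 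On each $C_j$ we have $f< s+(t-s)=t$, so $\bigcup_j C_j\subset\{f\le t\}$. The rank bound above then gives $\rank\pi_{s,t}\le r<\infty$.

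The main obstacle is the purely homological step: justifying that Čech $\check{H}_0$ of a finite union of compact connected sets has dimension at most $r$, and that it is compatible with the factorization. I would first try the standard fact that for compact Hausdorff $Y$, $\check{H}_0(Y)$ is determined by the quasi-components, which for compacta coincide with the components. A finite union of $r$ connected sets has at most $r$ components, and each of these is clopen in $C$, so $\check{H}_0(C)\cong\F^{m}$ with $m\le r$. If needed, one can instead argue directly via finite open covers: in the Čech inverse limit, every nerve of a cover of $C$ has $\check{H}_0$ of dimension at most $r$ on the image level.
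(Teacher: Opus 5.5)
Your proof is correct, and it takes a genuinely different route from the paper's.

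The paper argues modularly. It quotes from \cite{BPPPSS22} that $V_k(f)$ is $q$-tame on a box, and transports this to each $A_i$ via homeomorphism invariance (\ref{Equation_PM_invariance}). It then glues the pieces with Lemma \ref{Lemma_q-tame_MV}: the Mayer--Vietoris sequence in \v{C}ech homology gives a surjection $V_0(f|_{A_1})\oplus V_0(f|_{A_2})\to V_0(f)$, $q$-tameness passes to quotients, and induction on $l$ finishes.

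You bypass both the cited box result and Mayer--Vietoris. Uniform continuity gives finitely many compact connected sets $C_j$ with $\{f\le s\}\subset C=\bigcup_j C_j\subset\{f<t\}$. So $\pi_{s,t}$ factors through $\check{H}_0(C)$, which has dimension at most $r$. The one homological input you flag is standard, and your sketch of it is right. The components of $C$ are finitely many, hence clopen, so the \v{C}ech system splits over them. A nonempty connected compact Hausdorff space has $\check{H}_0\cong\F$, because every nerve of an open cover of it is connected.

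What your approach buys:
\begin{itemize}
\item It is self-contained.
\item It gives an explicit bound on $\rank \pi_{s,t}$ in terms of the modulus of continuity of $f$ at scale $t-s$.
\item It works verbatim for any compact space that admits finite covers by compact connected sets of arbitrarily small oscillation, e.g.\ compact locally connected metric spaces.
\end{itemize}

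What the paper's approach buys is that Lemma \ref{Lemma_q-tame_MV} glues arbitrary compact pieces whose $V_0$ is $q$-tame, with no connectedness or local structure assumed. This is exactly what is reused in Theorem \ref{Therem_Subadditivity_Deg_0}, where the pieces are only assumed $0$-moderate. Your argument does not give that closure property.
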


\begin{rem} Note that Proposition \ref{Proposition_q-tame} only applies to homology in degree zero, i.e. to $V_0(f).$ A similar result for all degrees $k\geq 0$ was proven in \cite[Lemma 4.9 ]{BPPPSS22}. However, in order to prove the result in all degrees, one needs to control the topology of the intersections of $A_i$, which is not needed in Proposition \ref{Proposition_q-tame}. In this regard, Proposition \ref{Proposition_q-tame} is a slight strengthening of \cite[Lemma 4.9 ]{BPPPSS22} in the special case $k=0.$ Secondly, let us point out that Proposition \ref{Proposition_q-tame} allows for $A_i$ to be homeomorphic to boxes of different dimensions $n_i.$ While we prove this general form of the proposition for the sake of completeness, in this paper we will apply Proposition \ref{Proposition_q-tame} only to boxes of equal dimensions.
\end{rem}

In order to prove Proposition \ref{Proposition_q-tame} we will need a couple of general facts about $V_k(f)$, which will be useful on their own.  Firstly, let us observe that if $\phi : X \to Y$ is a homeomorphism and $f\in C^0 (Y)$ then for all $k\geq 0$, 
\begin{equation}\label{Equation_PM_invariance}
V_k(f)\cong V_k(f\circ \phi).
\end{equation}
Indeed, since $\phi: \{ f\circ \phi \leq t \} \to \{ f\leq t \}$, the isomorphism is given by the homology-induced map $\phi_*: \check{H}_k(\{ f\circ \phi \leq t \}) \to \check{H}_k(\{ f\leq t \}).$ In other words, if $V_k(f)$ is $q$-tame, so is $V_k(f\circ \phi)$ and we have that for all $k\geq 0$
\begin{equation}\label{Equation_Barcode_invariance}
\mathcal{B}_k(f\circ \phi)=\mathcal{B}_k(f).
\end{equation}

Secondly, if $X$ is a box and $f\in C^0(X)$ then $V_k(f)$ is $q$-tame for all $k\geq 0.$ This fact has been elaborated in detail in \cite{BPPPSS22}. Now, (\ref{Equation_PM_invariance}) implies that if $X$ is homeomorphic to a box, then for every $f\in C^0(X)$, $V_k(f)$ is $q$-tame for all $k\geq 0.$

The last ingredient needed to prove Proposition \ref{Proposition_q-tame} is the following lemma.

\begin{lem}\label{Lemma_q-tame_MV}
Let $X$ be a compact Hausdorff topological space and let $A_1,A_2$ be compact subsets of $X$ such that $X=A_1 \cup A_2.$ Assume that $f\in C^0(X)$ is such that $V_0(f|_{A_1})$ and $V_0(f|_{A_2})$ are $q$-tame. Then $V_0(f)$ is also $q$-tame.
\end{lem}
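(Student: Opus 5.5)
We will use the Mayer--Vietoris sequence in \v{C}ech homology for the compact pair $\{f|_{A_1}\le t\}$, $\{f|_{A_2}\le t\}$. Write $X_t=\{f\le t\}$, $A_{i,t}=A_i\cap X_t$ and $B_t=A_{1,t}\cap A_{2,t}=(A_1\cap A_2)\cap X_t$. All of these are compact and $X_t=A_{1,t}\cup A_{2,t}$. The tail of the Mayer--Vietoris sequence in degree zero gives an exact sequence
$$\check{H}_0(A_{1,t})\oplus \check{H}_0(A_{2,t}) \xrightarrow{\ \alpha_t\ } \check{H}_0(X_t)\to 0 .$$
In other words, $\alpha_t(a,b)=\iota_{1*}a+\iota_{2*}b$ is surjective. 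These maps are natural in $t$: for $s\le t$ the inclusions commute, so $\pi^{X}_{s,t}\circ\alpha_s=\alpha_t\circ(\pi^{A_1}_{s,t}\oplus\pi^{A_2}_{s,t})$. This naturality is the only input needed.

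For $s<t$ we then have
$$\im \pi^X_{s,t}=\pi^X_{s,t}(\alpha_s(\check{H}_0(A_{1,s})\oplus\check{H}_0(A_{2,s})))=\alpha_t\big(\im\pi^{A_1}_{s,t}\oplus \im\pi^{A_2}_{s,t}\big).$$
Hence $\rank \pi^X_{s,t}\le \rank\pi^{A_1}_{s,t}+\rank\pi^{A_2}_{s,t}$. The right-hand side is finite by the $q$-tameness of $V_0(f|_{A_1})$ and $V_0(f|_{A_2})$, since $V_0(f|_{A_i})_t=\check{H}_0(A_{i,t})$. This proves that $V_0(f)$ is $q$-tame. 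Notably, the intersection $A_1\cap A_2$ plays no role, which is why no control on intersections is needed in degree zero.

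The one point requiring care is the surjectivity of $\alpha_t$, that is, the exactness of \v{C}ech Mayer--Vietoris at the degree-zero end for compact (possibly non-locally-connected) sets. I will take this from the standard continuity and exactness properties of \v{C}ech homology of compact Hausdorff spaces with field coefficients, as recalled in the remark after Example \ref{Example_Functions_Pers}. If one prefers to avoid the full sequence, surjectivity also follows directly: $\check{H}_0$ with field coefficients of a compact space is the inverse limit over finite open covers of the free vector spaces on the cover's connected pieces. Any such piece of $X_t$ meets $A_{1,t}$ or $A_{2,t}$, and the inverse system consists of finite-dimensional spaces, so the Mittag-Leffler condition holds and surjectivity passes to the limit. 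If $A_{i,t}=\emptyset$, its homology is zero and the argument is unchanged.

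Proposition \ref{Proposition_q-tame} then follows by induction on $l$. We apply the lemma with $A_1\cup\dots\cup A_{l-1}$ and $A_l$, using (\ref{Equation_PM_invariance}) together with the $q$-tameness for boxes.
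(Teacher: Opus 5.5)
Your proof is correct and essentially the paper's argument: the paper also takes the degree-zero end of the \v{C}ech Mayer--Vietoris sequence and uses naturality to get an epimorphism of persistence modules $V_0(f|_{A_1})\oplus V_0(f|_{A_2})\to V_0(f)$. It then cites \cite[Lemma 2.17]{BPPPSS22} to pass from this epimorphism to $q$-tameness, which is the step you carry out by hand with the estimate $\rank \pi^X_{s,t}\le \rank\pi^{A_1}_{s,t}+\rank\pi^{A_2}_{s,t}$. (Your optional direct proof of surjectivity is fine in spirit, but the directed set of covers may be uncountable, so the fact you actually need is that inverse limits of finite-dimensional vector spaces are exact, not the Mittag-Leffler condition alone.)
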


\begin{proof}
From $X=A_1 \cup A_2$ it follows that for every $t\in \R$ it holds that
$$\{f\leq t \} = \{ f|_{A_1} \leq t\} \cup \{ f|_{A_2} \leq t \}.$$ 
Since $f$ is continuous and $X, A_1, A_2$ are compact, it follows that $\{f\leq t \},\{ f|_{A_1} \leq t\} ,\{ f|_{A_2} \leq t \}$ are compact as well. Thus, we may apply the Mayer-Vietoris sequence to these three sets which yields the exactness of 
$$\check{H}_0(\{f|_{A_1}\leq t \}) \oplus \check{H}_0(\{f|_{A_2}\leq t \}) \to \check{H}_0(\{f \leq t \}) \to 0.$$
Naturality of the Mayer-Vietoris sequence now yields an exact sequence of persistence modules
$$V_0(f|_{A_1})\oplus V_0(f|_{A_2})\to V_0(f) \to 0,$$
and the claim follows from \cite[Lemma 2.17]{BPPPSS22}.
\end{proof}

\begin{proof}[Proof of Proposition \ref{Proposition_q-tame}]
By (\ref{Equation_PM_invariance}) and the discussion succeeding it, we have that $V_0(f|_{Ai})$ is $q$-tame for $i=1,\ldots , l.$ Applying Lemma \ref{Lemma_q-tame_MV} $l-1$ times finishes the proof.
\end{proof}

Proposition \ref{Proposition_q-tame} provides a very general class of spaces $X$ for which $V_0(f)$ is $q$-tame for any $f\in C^0(X).$ The situation in degrees $k\geq 1$ is slightly more complicated. Nevertheless, a relatively large class of compact spaces $X$ have the property that $V_k(f)$ is $q$-tame for every $k\geq 0. $ For example, this is the case for any compact smooth manifold, with or without boundary, see \cite[Section 2]{BPPPSS22}. Assuming that $V_k(f)$ is $q$-tame for all $k\geq 0$, let us describe $\B(f).$

\begin{exm}[Example \ref{Example_Functions_Pers} continued]\label{Example_Functions_Pers2}

Let $X$ be compact, $f\in C^0(X)$ and assume that $V_k(f)$ is $q$-tame for all $k\geq 0.$ The function $f$ achieves global minimum and maximum denoted by $\min f, \max f.$ If $t<\min f$ we have that $\{f\leq t \} =\emptyset$ and thus $V_k(f)_t=\check{H}_k(\{f\leq t \})=0$ for all $k\geq 0.$ On the other hand, by definition, $\dim V_k(f)_t$ equals the number of bars in $\B_k(f)$ which contain $t.$ We conclude that there are no bars of the form $(-\infty,b)$ in $\B(f).$ Moreover, if $[a,b) \in \B(f)$ it follows that $a\geq \min f.$ 

Let us now take $t\geq \max f.$ We have that $\{ f\leq t \}=X$ and thus $V_k(f)_t=\check{H}_k(X)$ for all $k\geq 0.$ Moreover, for any $t\geq s \geq \max f$ it holds that $\pi_{s,t}=(id_X)_*=id_{\check{H}_k(X)}.$ Thus, no bar in $\B(f)$ has an endpoint greater than $\max f.$ We conclude that all the bars in $\B(f)$ are either of the form $[a,b)$ for $\min f \leq a <b \leq \max f$ or of the form $[a,+\infty)$ for $\min f \leq a \leq \max f.$

Infinite bars in $\B_k(f)$ correspond to homology classes in $\check{H}_k(X).$ Indeed, by the previous discussion, for any $t\geq \max f$, $\dim V_k(f)_t= \dim \check{H}_k(X).$ Thus, there are exactly $\dim \check{H}_k(X)$ bars of the form $[a,+\infty)$ in $\B_k(f).$ Finite bars are of different nature - they signify a topological measure of oscillations of $f.$ This point has been explained in detail in \cite{PPS19,PRSZ20,Stoji20, BPPPSS22,BPPSS23,Ginot24, Stoji24}.

\end{exm}

\subsection{The bar-counting invariant}

Let $V$ be an upper semi-continuous, $q$-tame persistence module and $\B(V)$ its barcode. For each $\delta \geq 0$, we denote by $\mathcal{N}_\delta(V)$ the number of bars, counting multiplicities, in $\B(V)$ of length strictly greater than $\delta.$ We would like to consider persistence modules for which $\mathcal{N}_\delta(V)$ is finite. For $\delta=0$ this boils down to $\B(V)$ being finite. This condition is too restrictive to cover all the modules which we wish to consider in our geometric set-up. Instead, we ask for $\mathcal{N}_\delta(V)$ to be finite for every $\delta>0.$ This class of modules will be sufficiently general for all our applications, while still having a rich enough theory, as we will now explain.  To this end, recall the following definition, introduced in \cite{BPPPSS22}.

\begin{defn}
A persistence module $V$ is called \textit{moderate} if it is upper semi-continuous, $q$-tame, $\mathcal{N}_\delta(V)$ is finite for all $\delta>0$ and $\B(V)$ does not contain bars of the form $(-\infty,b), -\infty < b \leq +\infty.$
\end{defn}

The justification for focusing only on moderate persistence modules comes from the following lemma.

\begin{lem}\label{Lemma_q-tame_Moderate}
Let $X$ be a compact Hausdorff topological space, $f\in C^0(X)$ and $k\geq 0.$ If $V_k(f)$ is $q$-tame then it is also moderate. 
\end{lem}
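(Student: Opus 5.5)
Upper semi-continuity of $V_k(f)$ holds by \cite[Example 3.3]{Schmahl22}, and $q$-tameness is assumed. By Theorem \ref{Theorem_Structure_PM}, $\B(V_k(f))$ is therefore well defined. Two things remain to check: the barcode has no bars of the form $(-\infty,b)$, and $\mathcal{N}_\delta(V_k(f))<\infty$ for every $\delta>0$.

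The first point is already contained in Example \ref{Example_Functions_Pers2}. For $t<\min f$ we have $\{f\leq t\}=\emptyset$, so $V_k(f)_t=0$. A bar $(-\infty,b)$ would contain such a $t$ and contribute a nonzero summand to $V_k(f)_t$, which is impossible. The same example shows that every bar lies in $[\min f,+\infty)$, and that no finite bar ends after $\max f$. Hence every finite bar is contained in $[\min f,\max f]$.

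For finiteness of $\mathcal{N}_\delta$, fix $\delta>0$ and choose points $\min f-\delta/2 = t_0<t_1<\ldots<t_N = \max f+\delta/2$ with consecutive gaps less than $\delta/2$. Any bar $I$ of length greater than $\delta$ (finite or infinite) starts at some $a\geq \min f$. Since $I$ is contained in $[\min f,\max f]$ when finite, or is unbounded above when infinite, it contains an interval $[t_j,t_{j+1}]$ for some $j$. This is the step needing the most care: the half-open convention and the endpoint $\max f$ must be handled. I take $t_j$ to be the first grid point $\geq a$. Then $t_j<a+\delta/2$, so $t_{j+1}<a+\delta<b$, where $b$ is the right endpoint of $I$.

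Finally, for fixed $j$, the number of bars containing $[t_j,t_{j+1}]$ equals $\rank \pi_{t_j,t_{j+1}}$. This follows from the product decomposition: the structure map is the product of the interval-module maps, and it has a nonzero component exactly on those bars containing both $t_j$ and $t_{j+1}$. By $q$-tameness this rank is finite. Summing over the finitely many indices $j$ gives
$$\mathcal{N}_\delta(V_k(f))\leq \sum_{j=0}^{N-1}\rank \pi_{t_j,t_{j+1}}<\infty,$$
so $V_k(f)$ is moderate.
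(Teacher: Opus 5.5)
Your proof is correct and is essentially the paper's own independent argument. Both use the same two facts: every left endpoint lies in $[\min f,\max f]$, and the number of bars containing a fixed interval $[s,t]$ is bounded by $\rank \pi_{s,t}$, which is finite by $q$-tameness. The paper argues by contradiction: it takes an accumulation point $a_\infty$ of the left endpoints, so that infinitely many long bars contain $[a_\infty+\frac{\delta}{3},a_\infty+\frac{2\delta}{3}]$. Your finite grid is the covering form of the same compactness idea, with the small bonus of the explicit bound $\sum_j \rank \pi_{t_j,t_{j+1}}$.

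One step is left implicit: $t_j\neq t_N$, so that $t_{j+1}$ exists. This holds because $t_{N-1}>\max f\geq a$, and $a\leq \max f$ for infinite bars as well, by Example \ref{Example_Functions_Pers2}.
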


\begin{proof}
The fact that $V_k(f)$ is upper semi-continuous was discussed at the beginning of Subsection \ref{Subsection_Barcodes_Functions}, while the fact that $\B(V)$ does not contain bars of the form $(-\infty,b)$ was discussed in Example \ref{Example_Functions_Pers2}.  Lastly, $\mathcal{N}_\delta (V_k(f))$ is finite for all $\delta>0$ by \cite[Theorem 3.2]{BMMS24}. For the sake of completeness, let us give an independent short argument proving finiteness of $\mathcal{N}_\delta (V_k(f)).$

Assume that for some $\delta>0$, $\mathcal{N}_\delta(V_k(f))=+\infty.$ As explained in Example \ref{Example_Functions_Pers2} all the left endpoints of all bars in $\B(V_k(f))$ lie in $[\min f, \max f].$ Let $\{ [a_i, b_i) \}_{i\geq 1}$ be a sequence of bars of length greater than $\delta.$ Since all $a_i$ lie in $[\min f, \max f]$, there exists an accumulation point $a_\infty$ of the sequence $\{a_i \}_{i\geq 1}.$ It follows that infinitely many bars from the sequence $\{ [a_i, b_i) \}_{i\geq 1}$ contain the interval $[a_\infty + \frac{\delta}{3}, a_\infty + \frac{2\delta}{3}].$ This contradicts $q$-tameness of $V_k(f)$, since the rank of $\pi_{a_\infty + \frac{\delta}{3}, a_\infty + \frac{2\delta}{3}}$ is infinite.
\end{proof}

For $X$ and $f$ as in Lemma \ref{Lemma_q-tame_Moderate}, we denote by $\mathcal{N}_{k,\delta}(f)=\mathcal{N}_{\delta}(V_k(f))$ and by $\mathcal{N}_{\delta}(f)=\sum_{k\geq 0}\mathcal{N}_{k,\delta}(f).$

\begin{defn}
A compact Hausdorff topological space $X$ is \textit{$k$-moderate} if for every $f\in C^0(X)$, $V_k(f)$ is moderate.  A space which is $k$-moderate for all $k\geq 0$ is called just moderate.
\end{defn}

\begin{rem} In \cite[Definition 4.7]{BPPPSS22} the notion of a tame family is introduced.  Lemma \ref{Lemma_q-tame_Moderate} implies that a compact Hausdorff topological space $X$ is moderate if and only if a 1-element family $\{X\}$ is tame.  We prefer to use the term moderate since both definitions can be applied to non-compact spaces, in which case the two notions differ.
\end{rem}

Our main object of interest is $\mathcal{N}_{0,\delta}(f)$ for $\delta>0.$ Together, Proposition \ref{Proposition_q-tame} and Lemma \ref{Lemma_q-tame_Moderate} provide a plethora of 0-moderate spaces. For example, every compact topological manifold, with or without boundary, is a 0-moderate space. We have thus shown that $\mathcal{N}_{0,\delta}(f)$ is well-defined and finite for a large class of functions, which includes all functions considered in this paper. As a next step, we wish to show two properties of $\mathcal{N}_{0,\delta}$, subadditivity and stability, which will be the driving forces behind our arguments.

\begin{thrm}[Subadditivity theorem - \cite{BPPPSS22}]\label{Therem_Subadditivity_Deg_0} Let $X=A_1 \cup \ldots \cup A_l$, where each $A_i$ is 0-moderate. Then $X$ is also 0-moderate and for every $f\in C^0(X)$ it holds that
\begin{equation}\label{Equation_Subadditivity_Thm}
\mathcal{N}_{0,\delta} (f) \leq \sum_{i=1}^l \mathcal{N}_{0,\delta} (f|_{A_{i}}).
\end{equation}
\end{thrm}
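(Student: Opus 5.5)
The plan is to reduce to the case $l=2$ and argue by induction on $l$. Write $X=A\cup B$ with $A=A_1\cup\ldots\cup A_{l-1}$ and $B=A_l$. Once the two-set case is established, including the claim that $A\cup B$ is 0-moderate, the induction hypothesis makes $A$ 0-moderate with $\mathcal{N}_{0,\delta}(f|_A)\leq\sum_{i<l}\mathcal{N}_{0,\delta}(f|_{A_i})$. The general statement then follows by adding the $B$ term.

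For the two-set case, the starting point is the same as in Lemma \ref{Lemma_q-tame_MV}. The sets $\{f|_A\leq t\}$ and $\{f|_B\leq t\}$ are compact and their union is $\{f\leq t\}$. The Mayer--Vietoris sequence in \v{C}ech homology, together with its naturality in $t$, therefore gives an epimorphism of persistence modules
$$\Phi: V_0(f|_A)\oplus V_0(f|_B)\to V_0(f)\to 0.$$
Since $V_0(f|_A)$ and $V_0(f|_B)$ are $q$-tame, \cite[Lemma 2.17]{BPPPSS22} shows that $V_0(f)$ is $q$-tame, and Lemma \ref{Lemma_q-tame_Moderate} then shows it is moderate. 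So $X$ is 0-moderate. Degree $0$ is what makes the argument work: the map into $\check{H}_0$ is surjective, so $V_0(f)$ is a quotient module and no kernel or connecting-map term from lower degree appears. This is exactly why the statement is restricted to degree zero.

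It remains to show that for moderate modules, an epimorphism $U\twoheadrightarrow V$ implies $\mathcal{N}_\delta(V)\leq\mathcal{N}_\delta(U)$. Applied with $U=V_0(f|_A)\oplus V_0(f|_B)$, and using $\mathcal{N}_\delta(U)=\mathcal{N}_\delta(V_0(f|_A))+\mathcal{N}_\delta(V_0(f|_B))$ since barcodes of direct sums are unions, this gives \eqref{Equation_Subadditivity_Thm}. I expect this comparison to be the main step. My first approach is rank counting. For each $s$ and small $\eta>0$, the map $\pi^V_{s,s+\delta+\eta}$ has rank equal to the number of bars of $V$ containing $[s,s+\delta+\eta]$. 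Surjectivity gives $\pi^V_{s,t}\circ\Phi_s=\Phi_t\circ\pi^U_{s,t}$, so $\im\pi^V_{s,t}=\Phi_t(\im\pi^U_{s,t})$ and hence $\rank\pi^V_{s,t}\leq\rank\pi^U_{s,t}$ for all $s\leq t$.

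Pointwise rank inequalities alone do not bound the number of long bars, because many long bars of $V$ can sit at different positions. To handle this I would instead invoke the known result that $\mathcal{N}_\delta$ is monotone under epimorphisms of moderate modules. This follows from the induced matching theorem of Bauer--Lesnick: an epimorphism induces an injection of barcodes that matches each bar $[a,b)$ of $V$ to a bar $[a',b)$ of $U$ with the same right endpoint and $a'\leq a$. Such a bar of $U$ is at least as long, so long bars of $V$ inject into long bars of $U$. If the moderate (possibly infinite-barcode) setting needs care, I would first truncate to bars longer than $\delta$, which are finitely many, and pass to a limit using upper semicontinuity. This is presumably what \cite{BPPPSS22} does.
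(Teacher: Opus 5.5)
Your proposal is correct and follows the paper's route. As in Lemma \ref{Lemma_q-tame_MV}, $0$-moderateness of $X$ comes from the surjective Mayer--Vietoris map together with \cite[Lemma 2.17]{BPPPSS22} and Lemma \ref{Lemma_q-tame_Moderate}, and the inequality is the monotonicity of $\mathcal{N}_\delta$ under that epimorphism, which the paper imports from \cite[Remark 4.10]{BPPPSS22} rather than proving.

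Two small corrections:

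First, for an epimorphism $U\twoheadrightarrow V$ the Bauer--Lesnick matching pairs each bar $[a,b')$ of $V$ with a bar $[a,b)$ of $U$ having the \emph{same left endpoint} and $b\geq b'$. What you wrote describes monomorphisms. The correct statement still gives the length comparison you need.

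Second, the induced matching theorem assumes pointwise finite-dimensional modules. To meet this, fix $0<\varepsilon<\delta$ and apply it to the image submodules $t\mapsto \im \pi_{t-\varepsilon,t}$. By moderateness these have finitely many bars, namely the bars $[a,b)$ with $b-a>\varepsilon$ replaced by $[a+\varepsilon,b)$. The epimorphism restricts to an epimorphism between them, and their $\mathcal{N}_{\delta-\varepsilon}$ equals $\mathcal{N}_\delta$ of the original modules, so no limiting argument is needed.
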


\begin{proof}
The space $X$ is compact as a finite union of compacts and it is 0-moderate by Lemmas \ref{Lemma_q-tame_MV} and \ref{Lemma_q-tame_Moderate}. The proof of (\ref{Equation_Subadditivity_Thm}) is given in \cite[Remark 4.10]{BPPPSS22}.
\end{proof}

\begin{rem}\label{Remark_Subadditivity_Higher_Deg}
The subadditivity theorem also holds in degrees $k\geq 1$, albeit with worse control of the parameter $\delta$ on the left-hand side of (\ref{Equation_Subadditivity_Thm}) and with additional terms coming from the intersections of $A_i$ on the right-hand side of (\ref{Equation_Subadditivity_Thm}), see \cite[Proposition 4.8]{BPPPSS22}.
\end{rem}

Let $k\geq 0$ be an integer and $X$ be a $k$-moderate space. For $f,h \in C^0(X)$ let $d_{C^0}(f,h)= \max_{x\in X} |f(x)- h(x)|.$

\begin{prop}[Stability of $\mathcal{N}_\delta$]\label{Proposition_BarCount_Stability}
For all $\delta\geq 0$ and all $d_{C^0}(f,h) \leq \varepsilon \leq \frac{\delta}{2}$ it holds that
$$\mathcal{N}_{k,\delta}(f) \leq \mathcal{N}_{k,\delta-2\varepsilon}(h). $$
\end{prop}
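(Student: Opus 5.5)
The plan is to derive the statement from the algebraic stability theorem for persistence modules, via interleavings. Throughout, $f$ and $h$ are functions on the same $k$-moderate space $X$, and $\varepsilon = d_{C^0}(f,h)$, or any number between $d_{C^0}(f,h)$ and $\frac{\delta}{2}$.

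\textbf{Step 1: the interleaving.} Since $h-\varepsilon\leq f\leq h+\varepsilon$ pointwise, we have inclusions
$$\{f\leq t\}\subset\{h\leq t+\varepsilon\}\subset\{f\leq t+2\varepsilon\}$$
for every $t\in\R$, and symmetrically with $f$ and $h$ swapped. Applying $\check{H}_k$ yields morphisms of persistence modules
$$\Phi: V_k(f)\to V_k(h)[\varepsilon], \qquad \Psi: V_k(h)\to V_k(f)[\varepsilon],$$
where $[\varepsilon]$ denotes the shift $t\mapsto t+\varepsilon$. Both compositions equal the structure maps $\pi_{t,t+2\varepsilon}$, because every map involved is induced by an inclusion and inclusions compose. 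Naturality in $t$ holds for the same reason. Hence $V_k(f)$ and $V_k(h)$ are $\varepsilon$-interleaved. Both modules are moderate because $X$ is $k$-moderate, so their barcodes are defined via Theorem \ref{Theorem_Structure_PM}.

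\textbf{Step 2: isometry theorem.} I would invoke the isometry (algebraic stability) theorem for $q$-tame modules, as in \cite{CdSGO16} or \cite[Section 2]{BPPPSS22}. It says that $\varepsilon$-interleaved modules have barcodes at bottleneck distance at most $\varepsilon$. Concretely, there is a partial matching between $\B(V_k(f))$ and $\B(V_k(h))$ such that:
\begin{itemize}
\item matched bars have endpoints differing by at most $\varepsilon$;
\item every unmatched bar has length at most $2\varepsilon$.
\end{itemize}

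\textbf{Step 3: counting.} Take a bar of $V_k(f)$ of length greater than $\delta$. Since $\delta\geq 2\varepsilon$, its length exceeds $2\varepsilon$, so it is matched. Its partner has length greater than $\delta-2\varepsilon$. Matched bars are paired injectively, which gives
$$\mathcal{N}_{k,\delta}(f)\leq \mathcal{N}_{k,\delta-2\varepsilon}(h).$$
Infinite bars are handled the same way. Their counts agree, both equal to $\dim\check{H}_k(X)$ by Example \ref{Example_Functions_Pers2}, and the matching pairs them with each other.

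\textbf{Main obstacle.} The delicate point is applying the isometry theorem in the $q$-tame, direct-product, half-open-interval setting rather than for pointwise finite-dimensional modules. I would try first to cite it directly from \cite{CdSGO16}, which works at $q$-tame generality. There, closed versus open endpoints do not matter, because the conclusion only concerns bar lengths with strict inequalities.

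If a self-contained argument is preferred instead, I would use a rank-based route instead of the full matching. Suppose there are $N$ bars of $f$ of length greater than $\delta$. Pick a small $\eta>0$ and slide a window $[s,s+\delta-2\varepsilon-\eta]$ inside bars of $h$. Use that $\pi^f_{s-\varepsilon,\,s+\delta-\varepsilon}$ factors through $\pi^h_{s,\,s+\delta-2\varepsilon}$ by the interleaving, so the rank of the former bounds the number of long $h$-bars through the window. To convert these rank bounds into a total count, I would localize via the accumulation argument of Lemma \ref{Lemma_q-tame_Moderate}. This is more involved, so citing the isometry theorem is the intended route.
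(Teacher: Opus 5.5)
Your main argument is correct and follows the paper's route. The paper gives no separate proof; it only says the proposition is a simple consequence of the stability theorem for barcodes \cite{CEH07,BL15,CdSGO16,CCBdS16}, and your $\varepsilon$-interleaving of sublevel-set modules followed by counting through an $\varepsilon$-matching is the standard way to extract it. Your fallback is not needed, and as sketched it would not work on its own: the inequalities $\rank \pi^f_{s-\varepsilon,s+\delta-\varepsilon}\leq \rank \pi^h_{s,s+\delta-2\varepsilon}$ for all $s$ do not bound bar counts (the bars $[0,3),[2,5)$ against the single bar $[0,5)$, with $\delta=2$ and $\varepsilon=\frac{1}{10}$, satisfy every one of them), so that route would need rectangle measures as in the box lemma of \cite{CdSGO16}.
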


Proposition \ref{Proposition_BarCount_Stability} is a simple consequence of the more general \textit{stability theorem} for barcodes. This theorem was discovered in \cite{CEH07} and has since been the flagship result of topological persistence.  The modern formulation and the proof of the stability theorem can be found in \cite{BL15,CdSGO16,CCBdS16}. Let us mention that both the subadditivity theorem and the stability theorem have formulations which apply to general persistence modules.  The versions stated here are special cases of these results obtained by applying them to persistence modules of the form $V_k(f).$
 
Lastly, we will need a duality result about $\mathcal{N}_{k,\delta}.$ To this end, let $\mathcal{N}^{fin}_{k,\delta}(f)=\mathcal{N}_{k,\delta}(f)-\dim \check{H}_k(X)$ denote the number of finite bars of length greater than $\delta$ in $\B_k(f).$ The following result is a direct corollary of \cite[Proposition 2.19]{BPPPSS22}, see also \cite[Remark 2.11]{BPPPSS22} and \cite[Proposition 2.12]{BPPPSS22}.
 
\begin{prop}\label{Proposition_Duality_Barcodes}
Assume that $X$ is a closed, smooth, orientable, $n$-dimensional manifold and let $f\in C^0(X).$ For every degree $0\leq k \leq n-1$ and all $\delta>0$ it holds that
$$\mathcal{N}^{fin}_{k,\delta}(f)=\mathcal{N}^{fin}_{n-k-1,\delta}(-f).$$
\end{prop}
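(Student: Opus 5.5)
The plan is to derive the equality from Alexander–Lefschetz duality on the closed oriented manifold $X$, combined with the long exact sequence of a pair. The duality should send a finite bar $[a,b)$ in $\B_k(f)$ to a finite bar of the same length, with endpoints $-b,-a$, in $\B_{n-k-1}(-f)$. Bar lengths are preserved, so the counts of finite bars longer than $\delta$ agree.

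\textbf{Step 1 (dualize the module).} Work with field coefficients $\F$, so that Čech cohomology of a compact set is the linear dual of its Čech homology. For $q$-tame modules, dualizing $V_k(f)$ gives a module over the opposite poset whose barcode is obtained by reflecting the bars. Here $q$-tameness of $V_k(f)$ on a closed manifold is known from \cite[Section 2]{BPPPSS22}. Reflection changes only the open/closed conventions at the endpoints, so $\mathcal{N}_\delta$ is unchanged.

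\textbf{Step 2 (Alexander–Lefschetz duality).} For a compact $K\subset X$, duality gives a natural isomorphism
$$\check{H}^k(K)\cong H_{n-k}(X,X\setminus K),$$
natural with respect to inclusions. Take $K=\{f\le t\}$. Then $X\setminus K=\{f>t\}=\{-f<-t\}$, an open strict sublevel set of $-f$. So the dual module from Step 1 becomes the persistence module $t\mapsto H_{n-k}(X,\{-f<-t\})$, indexed by $-t$.

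\textbf{Step 3 (long exact sequence of the pair).} Use the long exact sequence of the pair $(X,\{-f<-t\})$:
$$H_{n-k}(\{-f<-t\})\to H_{n-k}(X)\to H_{n-k}(X,\{-f<-t\})\to H_{n-k-1}(\{-f<-t\})\to H_{n-k-1}(X).$$
This is an exact sequence of persistence modules, since it is natural in $t$. The finite bars of the relative module arise from the kernel of $H_{n-k-1}(\{-f<-t\})\to H_{n-k-1}(X)$. By Example \ref{Example_Functions_Pers2}, that kernel consists exactly of the finite bars of the strict-sublevel module of $-f$ in degree $n-k-1$. The cokernel of $H_{n-k}(\{-f<-t\})\to H_{n-k}(X)$ contributes only bars ending at $+\infty$ (in the reversed parameter), and these correspond to the infinite bars, i.e. to $\check{H}_k(X)$. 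A rank count at each pair of parameters $s<t$ should then show that finite bars longer than $\delta$ on both sides correspond bijectively. Subtracting the infinite bars is exactly what the definition of $\mathcal{N}^{fin}$ does.

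\textbf{Step 4 (strict versus non-strict sublevels).} Finally, replace the strict sublevel sets $\{-f<-t\}$ by the closed ones $\{-f\le -t\}$. The bar-counting invariant $\mathcal{N}_\delta$ does not change under this replacement by \cite[Proposition 2.12]{BPPPSS22}: the two modules differ only in endpoint conventions. It is also insensitive to the singular-versus-Čech choice of homology.

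\textbf{Main obstacle.} I expect the main difficulty to be the bookkeeping in Step 3. One must show that the interval decompositions pass through the exact sequence so that finite bars match, and do so without assuming the modules are finitely presented. My first approach would be to compare $\rank\pi_{s,t}$ on both sides for all $s<t$, using exactness. The number of bars containing $[s,t]$ determines the barcode of a $q$-tame module, which should avoid having to choose explicit decompositions. If this becomes delicate, a fallback is to prove the equality first for Morse functions via classical extended-persistence duality and then pass to general $f$ using Proposition \ref{Proposition_BarCount_Stability}. The fallback has its own difficulty: stability only controls $\delta$ up to $\pm2\varepsilon$. Passing to the limit therefore needs semicontinuity of $\delta\mapsto\mathcal{N}_\delta$, which yields an exact equality only at $\delta$ that are not bar lengths. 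That is why I prefer the direct duality route.
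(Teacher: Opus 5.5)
The paper gives no argument of its own for this statement. It quotes the duality for barcodes of continuous functions on closed manifolds from \cite[Proposition 2.19]{BPPPSS22}, and uses \cite[Remark 2.11, Proposition 2.12]{BPPPSS22} only to reconcile conventions (open versus closed sublevel sets, singular versus \v{C}ech homology). Your route is different: you rebuild the duality directly for arbitrary continuous $f$, via Alexander--Lefschetz duality and the exact sequence of the pair, with no Morse approximation. Only your Step 4 relies on the same comparison results the paper points to. The citation buys brevity. Your route buys a self-contained, explicit bar correspondence, at the cost of the bookkeeping in Step 3. As written, that step is asserted rather than proved, so here is the missing point.

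Ranks are not additive in short exact sequences of persistence modules, so a bare rank count cannot work. For example, $0\to\F_{[1,\infty)}\to\F_{[0,\infty)}\to\F_{[0,1)}\to 0$ has rank $1$ in the middle and $0$ at both ends over $[0,2]$. You need the special shape of the cokernel term. Put $U_u=\{-f<u\}$, $R_u=H_{n-k}(X,U_u)$, $C_u=\mathrm{coker}(H_{n-k}(U_u)\to H_{n-k}(X))$, $W_u=H_{n-k-1}(U_u)$ and $K_u=\ker(W_u\to H_{n-k-1}(X))$. Then $0\to C\xrightarrow{\beta} R\xrightarrow{\partial} K\to 0$ is exact.

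Since $U_u\subset U_v$ for $u<v$, the map $C_u\to C_v$ is surjective. Hence $\beta(C_v)=\pi^R_{u,v}(\beta(C_u))\subset\pi^R_{u,v}(R_u)$. Moreover, $\partial$ maps $\pi^R_{u,v}(R_u)$ onto $\pi^K_{u,v}(K_u)$ with kernel exactly $\beta(C_v)$. This gives $\rank\pi^R_{u,v}=\rank\pi^C_{u,v}+\rank\pi^K_{u,v}$ for all $u<v$. Equivalently, $C$ is a finite sum of the injective modules $\F_{(-\infty,a]}$ and $\F_{(-\infty,a)}$, so the sequence splits.

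Likewise, $\rank\pi^K_{u,v}=\rank\pi^W_{u,v}-\rank\pi^W_{u,w}$ for $w>\max(-f)$. This also shows that $W$ is $q$-tame, since $K$ is a quotient of the $q$-tame module $R$. Rank functions on pairs $u<v$ determine bar lengths, and $C$ contains all the bars unbounded below. So the finite bars of $R$ are those of $K$, i.e. the finite bars of $W$, and Step 4 finishes.

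Two smaller corrections. First, in Step 1 the direction is reversed. With field coefficients one has $\check{H}_k(K)\cong\check{H}^k(K)^*$, not the converse, and the two differ, e.g., for a Cantor set. This is harmless: the structure maps of $V_k(f)$ are then transposes of the cohomology restriction maps, and transposes have equal rank.

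Second, the fallback you discard actually gives equality at every $\delta$. The map $\delta\mapsto\mathcal{N}_\delta$ is right-continuous, because only finitely many bars are longer than $\delta$. For a Morse function $h$ with $d_{C^0}(f,h)\le\varepsilon$, Proposition \ref{Proposition_BarCount_Stability} gives $\mathcal{N}^{fin}_{k,\delta+4\varepsilon}(f)\le\mathcal{N}^{fin}_{k,\delta+2\varepsilon}(h)=\mathcal{N}^{fin}_{n-k-1,\delta+2\varepsilon}(-h)\le\mathcal{N}^{fin}_{n-k-1,\delta}(-f)$. Letting $\varepsilon\to 0$ and using symmetry yields the claim.
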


\subsection{Coarse counts}\label{Subsection_Coarse_Classical}

In the introduction, we defined the coarse nodal count and the coarse count of zeros.  The following result connects these notions to the bar counting invariant.

\begin{prop}\label{Proposition_Counts_Comparison}
Let $X$ be a moderate space, $l\geq 1$ an integer and $F:X\to \R^l$ a continuous map. For all $0<\varepsilon<\delta$ and all degrees $k$ it holds that
\begin{equation}\label{Inequality_Zeros_Bar}
z_{k,\delta}(F)\leq \mathcal{N}_{k,\delta}(|F|),
\end{equation}
as well as
\begin{equation}\label{Inequality_Nodal_Bar}
m_{k,\delta}(F)\leq \mathcal{N}_{k,\delta - \varepsilon}(-|F|).
\end{equation}
\end{prop}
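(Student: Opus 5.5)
The plan is to realize both coarse counts as ranks of structure maps $\pi_{s,t}$ in the persistence modules $V_k(\pm|F|)$. Then I will use the structure theorem (Theorem \ref{Theorem_Structure_PM}): $\rank \pi_{s,t}$ equals the number of bars containing $[s,t]$. Since $X$ is moderate, $V_k(\pm|F|)$ are moderate by Lemma \ref{Lemma_q-tame_Moderate}, so the barcodes and the counts $\mathcal{N}_{k,\cdot}$ are well defined and finite.

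\emph{Zeros.} Since $|F|\geq 0$, we have $\{F=0\}=\{|F|\leq 0\}$. Hence the map defining $z_{k,\delta}(F)$ is exactly
$$\pi_{0,\delta}\colon V_k(|F|)_0\to V_k(|F|)_\delta,$$
and $z_{k,\delta}(F)$ equals the number of bars of $\B_k(|F|)$ containing $[0,\delta]$. By Example \ref{Example_Functions_Pers2}, every bar has the form $[a,b)$ with $a\geq \min|F|\geq 0$. A bar containing $[0,\delta]$ must therefore be $[0,b)$ with $b>\delta$, so its length is strictly greater than $\delta$. This yields (\ref{Inequality_Zeros_Bar}).

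\emph{Nodal domains.} Here the sets $\{|F|>\delta\}$ and $\{F\neq 0\}$ are open sublevel sets of $-|F|$, while the module uses closed sublevel sets, so I will insert closed sets in between. Fix $\varepsilon$ with $0<\varepsilon<\delta$. We have the chain of inclusions
$$\{|F|>\delta\}\subset\{-|F|\leq -\delta\}\subset\{-|F|\leq -\varepsilon\}\subset\{F\neq 0\}.$$
By functoriality of Čech homology, the map $\check{H}_k(\{|F|>\delta\})\to \check{H}_k(\{F\neq0\})$ factors through the structure map
$$\pi_{-\delta,-\varepsilon}\colon V_k(-|F|)_{-\delta}\to V_k(-|F|)_{-\varepsilon}.$$
Hence $m_{k,\delta}(F)\leq \rank \pi_{-\delta,-\varepsilon}$, which is the number of bars $[a,b)$ of $\B_k(-|F|)$ with $a\leq -\delta$ and $b>-\varepsilon$. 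Each such bar has length $b-a>\delta-\varepsilon$, including the infinite bars. This gives (\ref{Inequality_Nodal_Bar}).

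The only step needing care is the open-versus-closed issue in the nodal case. The choice of the intermediate closed level $-\varepsilon$ is what costs the $\varepsilon$ loss in the length threshold. Everything else is bookkeeping with the interval decomposition.
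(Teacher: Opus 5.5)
Your proof is correct and matches the paper's argument: for zeros, $z_{k,\delta}(F)=\rank \pi_{0,\delta}^{V_k(|F|)}$, and for the nodal count, the factorization through $\pi_{-\delta,-\varepsilon}^{V_k(-|F|)}$ via the intermediate closed sublevel sets $\{-|F|\leq -\delta\}\subset\{-|F|\leq -\varepsilon\}$. Your appeal to $a\geq \min|F|\geq 0$ in the zeros case is harmless but unnecessary, since any bar $[a,b)\supset[0,\delta]$ already has length $b-a>\delta$.
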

\begin{proof}
To prove (\ref{Inequality_Zeros_Bar}), we notice that $\{F=0 \} = \{ |F|\leq 0 \}$ and thus
$$z_{k,\delta}(F) = \rank \left( \check{H}_k(\{   |F| \leq 0 \}) \to \check{H}_k(\{  |F|\leq \delta\}) \right)= 
\rank \pi_{0,\delta}^{V_k(|F|)}.$$
By the definition of a barcode $\rank \pi_{0,\delta}^{V_k(|F|)}$ equals the number of bars in $\B_k(|F|)$ which contain the interval $[0,\delta].$ Since all these bars have lengths greater than $\delta$,  (\ref{Inequality_Zeros_Bar}) follows.

To prove (\ref{Inequality_Nodal_Bar}), we notice that $\{F\neq 0 \} = \{|F|>0\} = \{-|F|<0 \}$ and thus
$$m_{k,\delta} (F)= \rank \left( \check{H}_k(\{  -|F|< -\delta \}) \to \check{H}_k(\{  -|F|<0\}) \right).$$
To estimate this rank, notice that for every $0<\varepsilon < \delta$, the relevant map factors as
$$\check{H}_k(\{  -|F|< -\delta \}) \to \check{H}_k(\{  -|F|\leq -\delta\}) \to \check{H}_k(\{  -|F|\leq -\varepsilon \}) \to \check{H}_k(\{  -|F|<0\}),$$
and thus
$$m_{k,\delta}(F)\leq \rank \left( \check{H}_k(\{  -|F|\leq -\delta \}) \to \check{H}_k(\{  -|F|\leq -\varepsilon \}) \right)= \rank  \pi_{-\delta,-\varepsilon}^{V_k(-|F|)}.$$
By definition, $\rank  \pi_{-\delta,-\varepsilon}^{V_k(-|F|)}$ is equal to the number of bars in $\B_k(-|F|)$ which contain the interval $[-\delta, -\varepsilon]$ and (\ref{Inequality_Nodal_Bar}) follows.
\end{proof}

\begin{rem}
In fact, a stronger form of (\ref{Inequality_Nodal_Bar}) holds. Namely, for all $\delta>0$ and all $k\geq 0$
$$m_{k,\delta} (F)\leq \mathcal{N}_{k,\delta}(-|F|).$$
The proof of this inequality can be extracted from \cite[Section 2]{BPPPSS22}. 
\end{rem}

The proof of Proposition \ref{Proposition_Counts_Comparison} shows that $z_{k,\delta}$ only counts certain bars of length greater than $\delta$ in $\B_{k}(|F|)$, namely those which contain $[0,\delta]$ (and similarly for $m_{k,\delta}$ and $\B_{k}(-|F|)$). In this regard, $z_{k,\delta}$ and $m_{k,\delta}$ can be considered to be localized at $[0,\delta]$ and $[-\delta,0]$ versions of $\mathcal{N}_{k,\delta}.$ The following example shows that the difference between $z_{k,\delta},m_{k,\delta}$ and $\mathcal{N}_{k,\delta}$ can in general be large.

\begin{exm}\label{Example_Discrepancy}
Let $S^1=\R/ 2\pi \Z$ and let $f_n : S^1 \to \R$ be given by $f_n(x)=\sin (nx)+100, n\geq 1.$ The barcode of $f_n$ is given by
$$\B_0(f_n)=\{ [99,+\infty), [99,101)\times (n-1) \} ,~\B_1(f_n)=\{ [101,+\infty) \}.$$
In particular, for any $0<\delta<2$ we have that $\mathcal{N}_{0,\delta}(f_n)=n-1$, while $m_{0,\delta}(f_n)=1,z_{0,\delta}(f_n)=0.$ 
\end{exm}

\section{Stratified groups and nilmanifolds}\label{Section_Stratified}

In this section we review basic facts about stratified Lie groups and their nilmanifolds. The material which we cover is classical, see \cite{Rag76,FS82,BLU_Book,LeDonne25} for an extensive treatment.

\subsection{Basics}\label{Subsection_Carnot_Nilmanifolds}

Let $\frak{g}$ be a finite-dimensional Lie algebra over $\R.$\footnote{Unless otherwise specified, all Lie groups and Lie algebras are assumed to be finite-dimensional and defined over $\R.$ Similarly, all Lie groups are assumed to be connected.} Denote by $\frak{g}^{(1)}=\frak{g}, \frak{g}^{(i+1)}=Span_\R ([\frak{g},\frak{g}^{(i)}])$ for all $i\geq 1.$ Recall that $\frak{g}$ is called \textit{nilpotent} if there exists $i_0$ such that $\frak{g}^{(i_0)}=\{ 0\}.$ If $\frak{g}$ is the Lie algebra of a Lie group (meaning the Lie algebra of left-invariant vector fields), then $\frak{g}$ being nilpotent is equivalent to the group being nilpotent.  Let $\G=(G,\cdot)$ be a nilpotent Lie group. A subgroup $\Gamma <\G$ is called a \textit{lattice} if it is discrete and the quotient space $\Gamma \backslash \G = \{ \Gamma \cdot g ~|~ g\in G\}$ is compact. Given a lattice $\Gamma<\G$,  $N=\Gamma \backslash \G$ is a closed manifold and $\pi: G \to N$ is a smooth covering. Such manifolds are called \textit{nilmanifolds}. Our main focus will be on a class of nilmanifolds which are obtained as quotients of stratified groups.  We now recall the relevant definitions and properties.

A \textit{stratification} on a Lie algebra $\frak{g}$ is a direct sum decomposition
$$\frak{g}=V_1 \oplus \ldots \oplus V_s,$$
such that $V_s\neq \{0 \}$ and for all $j\geq 1$,
\begin{equation}\label{Equation_Stratification_Def}
V_{j+1}=Span_\R ([V_1,V_j]),
\end{equation}
where $V_j=\{0 \}$ for $j\geq s+1.$ If a Lie algebra admits a stratification it is called \textit{stratifiable} and if a stratification is fixed, it is called \textit{stratified}. Given two different stratifications of $\frak{g}$
$$\frak{g}=V_1 \oplus \ldots \oplus V_s=V'_1 \oplus \ldots \oplus V'_{s'},$$
there exists a Lie algebra automorphism $\Phi$ of $\frak{g}$ such that $\Phi(V_i)=V'_i$ for all $i\geq 1.$ In particular $s=s'$ and $V_i \cong V'_i$ for all $i.$ This allows us to introduce the following notions.

\begin{defn}\label{Definition_Stratified_Lie_Algebra}
Given a stratifiable Lie algebra $\frak{g}$, the integer $s$ is called the \textit{step} of $\frak{g}$, while $Q=\sum\limits_{i=1}^{s} i \cdot \dim V_i$ is called the \textit{homogeneous dimension} of $\frak{g}.$ The \textit{weights} $\omega_1,\ldots , \omega_n$ of $\frak{g}$ are given by 
$$(\omega_1,\ldots , \omega_n)=(1,\ldots, 1,2, \ldots ,2, \ldots , s,\ldots,s),$$
where each $i$ is repeated $\dim V_i$ times.  Note that $Q=\omega_1+ \ldots + \omega_n.$
\end{defn}

\begin{defn}\label{Definiton_Stratified_Group}
A simply connected Lie group $\G$ whose Lie algebra $\frak{g}$ is stratified is called a \textit{stratified Lie group}.  The step and the homogeneous dimension of $\G$ are the step and the homogeneous dimension of $\frak{g}. $
\end{defn}

\begin{rem}\label{Remark_Stratified}
Note that in Definition \ref{Definiton_Stratified_Group}, we ask for $\frak{g}$ to be stratified rather than merely stratifiable.  This is equivalent to making a choice of $V_1$, due to (\ref{Equation_Stratification_Def}). 
\end{rem}

In order to do analysis on a stratified group $\G$, we need to fix a measure on its Borel $\sigma$-algebra.  To this end, recall that $\G$ can be equipped with a left Haar measure, which is unique up to a multiplication by a positive constant. Moreover, stratified groups are unimodular, i.e. any left Haar measure $\mu$ is also a right Haar measure, which makes it bi-invariant. A way of explicitly constructing such a measure is as the push-forward of a Lebesgue measure by the exponential map. More precisely, if $\exp : \frak{g} \to G$ is the exponential map and $\mu_0$ a Lebesgue measure on $\frak{g}$, then $\exp_* \mu_0$ is a bi-invariant Haar measure on $\G.$ In particular, every bi-invariant Haar measure $\mu$ on $\G$ satisfies
\begin{equation}\label{Equation_Haar_Exp}
\mu = c_\mu \cdot exp_* \mu_0
\end{equation}
for some constant $c_\mu.$

\begin{exm}\label{Example_Heisenberg}
Let $\H_3(\R)$ be a subgroup of $\rm{GL}_3(\R)$ given by
$$\H_3(\R)= \left\lbrace \begin{bmatrix}
    1 & x & z \\
    0 & 1 & y \\
    0 & 0 & 1 
\end{bmatrix} ~ \Bigg\vert~x,y,z \in \R \right\rbrace .$$
One readily checks that $\H_3(\R)$ is a closed subgroup of $\rm{GL}_3(\R)$, hence a Lie group, as well as that it is nilpotent.  It is called the \textit{3-dimensional real Heisenberg group}. It is diffeomorphic to $\R^3$,  the global coordinates being given by $(x,y,z).$ Its Lie algebra, denoted by $\frak{h}_3$, has a basis given by the following left-invariant vector fields
$$X_1=\partial_x,~X_2=\partial_y+x\partial_z,~X_3=\partial_z,$$
which satisfy
$$[X_1, X_2]=X_3,~[X_1,X_3]=[X_2,X_3]=0.$$
Thus,  $V_1= Span_\R(X_1,X_2), V_2=Span_\R(X_3)$ gives a stratification of $\frak{h}_3$ with step 2 and weights $\omega_1=\omega_2=1, \omega_3=2.$ The homogeneous dimension of $\H_3(\R)$ is equal to 4.  Let $\H_3(\Z)$ be a subgroup of $\H_3(\R)$ given by matrices with integer entries $x,y,z.$ This is a discrete subgroup of $\H_3(\R)$, called the \textit{3-dimensional integer Heisenberg group}.  In order to check that it is a lattice in $\H_3(\R)$, we will describe the quotient $\H_3(\Z) \backslash \H_3(\R).$ To this end,  for $x\in \R$ let $\lfloor x \rfloor$ be the integer part of $x$ and $\{ x\}= x- \lfloor x \rfloor$ the fractional part of $x.$ Since
$$\begin{bmatrix}
    1 & - \lfloor x \rfloor & - \lfloor z -y \lfloor x \rfloor \rfloor \\
    0 & 1 & - \lfloor y \rfloor \\
    0 & 0 & 1 
\end{bmatrix} 
\begin{bmatrix}
    1 & x & z \\
    0 & 1 & y \\
    0 & 0 & 1 
\end{bmatrix}
=
\begin{bmatrix}
    1 & \{ x \} & \{ z - y\lfloor x \rfloor \} \\
    0 & 1 & \{ y \} \\
    0 & 0 & 1 
\end{bmatrix} ,$$
we conclude that $[0,1]^3$ is a fundamental domain for the action of $\H_3(\Z)$ on $\H_3(\R).$ A straightforward analysis of the action on this domain shows that $\H_3(\Z) \backslash \H_3(\R)$ is diffeomorphic to a mapping torus of a Dehn twist on $\T^2.$  Analogous definitions of the $(2n+1)$-dimensional Heisenberg groups and corresponding nilmanifolds can be given for any $n\geq 1$, see \cite[Example 9.3.7]{LeDonne25} for details.
\end{exm}

\subsection{Exponential coordinates}\label{Subsection_Exp_Coordinates}

Recall that two simply connected Lie groups are isomorphic if and only if their Lie algebras are isomorphic. Moreover, for every Lie algebra $\frak{g}$ there exists a simply connected Lie group whose Lie algebra is isomorphic to $\frak{g}.$ In this regard, stratified Lie groups correspond to stratified Lie algebras. To make this correspondence explicit, let $\G$ be a stratified group, $\frak{g}$ its Lie algebra and $exp:\frak{g}\to G$ the exponential map (here as before, we identify $\frak{g}$ with the space of left-invariant vector fields on $G$).  One may show that $exp$ is a diffeomorphism.  Thus, fixing a basis of $\frak{g}$ defines global coordinates on $G.$ We will repeatedly use such coordinates in our arguments. Moreover, for the sake of simplicity, we will work with bases of special type, called the \textit{Carnot bases}\footnote{We adopted this terminology from \cite{LeDonne25}.} of $\frak{g}.$ We now recall the relevant definitions.

Let $\frak{g}=V_1\oplus \ldots \oplus V_s$ be a stratification of $\frak{g}.$ Let $m_0=0, m_1=m=\dim V_1,m_2=\dim (V_1 \oplus V_2), \ldots ,  m_s=\dim (V_1\oplus \ldots \oplus V_s).$ 

\begin{defn}
A basis $X_1,\ldots , X_n$ is called a \textit{Carnot basis} of $\frak{g}$ if 
\begin{enumerate}
\item For all $1\leq i \leq s$, $X_{m_{i-1}+1},\ldots , X_{m_i}$ is a basis of $V_i.$ In particular, $X_j \in V_{\omega_j}$ for all $j.$
\item For every $j\geq m_1+1$ there exist $k_j,l_j$ such that $X_{k_j}\in V_1, X_{l_j} \in V_{\omega_{j-1}}$ and $X_j = [X_{k_j}, X_{l_j}].$ 
\end{enumerate}
\end{defn}

Every stratified Lie algebra has a Carnot basis. Furthermore, any basis $X_1,\ldots , X_m$ of $V_1$ can be extended to a Carnot basis $X_1,\ldots , X_n.$ This is done by iteratively choosing a basis of $V_{j+1}$ from the brackets of already chosen basis elements of $V_1$ and $V_j.$

From now on, let us fix a Carnot basis $X_1,\ldots, X_n.$ The \textit{homogeneous $\infty$-norm} on $\G$ with respect to the basis $X_1,\ldots , X_n$ is given by
$$| exp(x_1 X_1 +\ldots x_n X_n) |_\infty = \max_i |x_i|^\frac{1}{\omega_i}.$$
For $g\in G$,  $r>0$, denote by

\begin{equation}\label{Equation_Homogeneous_Ball}
B_\infty(g,r)= \{ h\in G~|~ |g^{-1} h|_\infty \leq r \}.
\end{equation}
One readily checks that
\begin{equation}\label{Equation_Translate_Ball}
B_\infty(g,r)=g\cdot B_\infty(id,r).
\end{equation}

Let $\mu$ be a bi-invariant Haar measure on $\G.$ We wish to compute $\mu (B_\infty(g,r)).$ To this end, notice that 
$$\mu(B_\infty(g,r))=\mu (g \cdot B_\infty(id,r))=\mu (B_\infty(id,r)),$$
by (\ref{Equation_Translate_Ball}) and the bi-invariance of $\mu.$ Let
$$\Bx_{(\omega_1,\ldots ,\omega_n)}(r)= \{ (x_1,\ldots ,x_n)\in \R^n ~|~ (\forall i)~ |x_i|\leq r^{\omega_i} \}.$$
By definition
\begin{equation}\label{Equation_Box_Box}
B_\infty(id,r) = exp (\Bx_{(\omega_1,\ldots ,\omega_n)}(r)),
\end{equation}
which together with  (\ref{Equation_Haar_Exp}) yields
\begin{equation}\label{Equation_Volume_of_Balls}
\mu (B_\infty(g,r))= c_\mu \cdot \mu_0 (\Bx_{(\omega_1,\ldots ,\omega_n)}(r))=a r^Q,
\end{equation}
where $a=c_\mu \cdot \mu_0 (\Bx_{(\omega_1,\ldots ,\omega_n)}(1)).$

\begin{rem}\label{Remark_Triangle_Homogeneous}
Intuitively, we think of $B_\infty(g,r)$ as a closed ball with center $g$ and radius $r$ with respect to $|\cdot |_\infty.$ However, this is not a metric ball in the usual sense since, in general,  homogeneous norms do not satisfy the triangle inequality. Nevertheless, for every stratified group $\G$ and a Carnot basis  $X_1, \ldots , X_n$ there exists a constant $C$ such that 
\begin{equation}\label{Equation_Almost_Triangle}
|gh|_\infty  \leq C (|g|_\infty + |h|_\infty).
\end{equation}
\end{rem}

The relation between the Lie group structure on $\G$ and the Lie algebra structure on $\frak{g}$ under $exp$ is expressed by the Baker-Campbell-Hausdorff formula. We will not discuss this formula in detail, as it will not be needed in the paper. Nevertheless, we will make use of the following corollary of the Baker-Campbell-Hausdorff formula (for a proof, see Remark 1.4.4 and  Proposition 2.2.22 in \cite{BLU_Book}).

\begin{prop}\label{Proposition_Polynomial_Coordinates}
Let $X_1,\ldots, X_n$ be a Carnot basis of $\frak{g}$ and let $g,h\in \G$ be given by 
$$g=exp(u_1X_1+\ldots + u_nX_n), h=exp(v_1X_1+\ldots + v_nX_n),$$
and their product by 
$$g \cdot h = exp(w_1 X_1+\ldots + w_n X_n).$$
For all $1\leq i \leq n$ it holds that
$$w_i=u_i+v_i+P_i(u_1,\ldots , u_{i-1}, v_1,\ldots , v_{i-1}),$$
where $P_i$ is a polynomial on $\R^{2i-2}$ of degree at most $\omega_i.$
\end{prop}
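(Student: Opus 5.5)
The statement to prove is Proposition \ref{Proposition_Polynomial_Coordinates}: the triangular polynomial structure of the group law in exponential coordinates of the first kind adapted to a Carnot basis. I would work with the Baker--Campbell--Hausdorff formula. Since $\frak{g}$ is nilpotent of step $s$, the series terminates. Hence
$$\log(\exp(U)\exp(V)) = U+V+\tfrac12[U,V]+\tfrac1{12}\big([U,[U,V]]-[V,[U,V]]\big)+\ldots$$
is a finite sum of iterated brackets of length at most $s$, with $U=\sum u_iX_i$ and $V=\sum v_iX_i$. Every bracket term of length $r\geq 2$ is multilinear in the entries, so after expanding it is a linear combination, with fixed real coefficients, of monomials $u_{i_1}\cdots v_{i_r}$ times $[X_{i_1},[\ldots,X_{i_r}]]$. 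Writing each such bracket in the basis $X_1,\ldots,X_n$ shows that every $w_i$ equals $u_i+v_i$ plus a polynomial in $u,v$.

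The key step is to use the grading to control which variables occur and in what degree. The stratification gives $[V_a,V_b]\subset V_{a+b}$, which follows from the defining relation by induction and the Jacobi identity. I would prove this first, by induction on $a$: for $a=1$ it is the definition, and $V_{a+1}=[V_1,V_a]$ together with Jacobi gives the step. It follows that $[X_{i_1},[\ldots,X_{i_r}]]\in V_{\omega_{i_1}+\ldots+\omega_{i_r}}$. So the coefficient of $X_i$ in this bracket is nonzero only if $\omega_{i_1}+\ldots+\omega_{i_r}=\omega_i$.

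Since $r\geq 2$ and all weights are at least $1$, every index $i_j$ appearing satisfies $\omega_{i_j}<\omega_i$. Because the Carnot basis is ordered by layer, this forces $i_j<i$, so $P_i$ depends only on $u_1,\ldots,u_{i-1}$ and $v_1,\ldots,v_{i-1}$. The total degree of the monomial is $r\leq\sum_j\omega_{i_j}=\omega_i$, which gives the degree bound. (In fact $P_i$ is weighted-homogeneous of weighted degree $\omega_i$, which is stronger.)

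The main obstacle is not computational. It is justifying that BCH applies globally, rather than only near the identity. For nilpotent simply connected groups this is standard: $\exp$ is a global diffeomorphism, and both sides of the BCH identity are real-analytic in $(U,V)$. Indeed the right side is polynomial and the left side is analytic since $\log=\exp^{-1}$ is analytic. They agree near $0$, so they agree everywhere. Alternatively, one can cite \cite{BLU_Book} as the paper does. Note also that the Carnot property of the basis, namely that $X_j$ is a bracket, is not needed; only its adaptedness to the stratification is used.
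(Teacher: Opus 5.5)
Your proof is correct. The terminating BCH expansion, the grading $[V_a,V_b]\subset V_{a+b}$ (your Jacobi induction is fine) and the observation that a bracket of $r\geq 2$ basis vectors contributes to the $X_i$-coordinate only when $\sum_j\omega_{i_j}=\omega_i$ give exactly the dependence on $u_1,\ldots,u_{i-1},v_1,\ldots,v_{i-1}$ and the degree bound $r\leq\omega_i$, while your analytic-continuation remark settles the global validity of BCH. The paper gives no proof of its own; it presents this as a corollary of the Baker--Campbell--Hausdorff formula and cites \cite{BLU_Book}, so your argument is essentially that intended route written out in full.
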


\section{Sub-Riemannian geometry}\label{Section_Sub-Riemannian}

This section covers elements of sub-Riemannian geometry used throughout the text.  For a detailed treatment of the subject we refer the reader to \cite{SubRiemannian96,Mont02,ABB19,LeDonne25}.

\subsection{Sub-Riemannian manifolds}\label{Subsection_SR_Basic}

Let $M$ be a smooth manifold and denote by $C^\infty(M;TM)$ the set of smooth sections of $TM$, i.e. smooth vector fields on $M.$ Commutators of vector fields render $C^\infty(M;TM)$ a Lie algebra. Given a subset $\mathcal{V}\subset C^\infty(M;TM)$, we denote by $\Lie(\mathcal{V})$ the minimal Lie subalgebra of $C^\infty(M;TM)$ which contains $\mathcal{V}.$ We say that $\mathcal{V}$ satisfies \textit{H\"{o}rmander's condition} if for all $p\in M$, $\Lie(\mathcal{V})_p=T_p M,$ where $\Lie(\mathcal{V})_p= \{ X(p)~|~X \in \Lie(\mathcal{V}) \}.$

Recall that a \textit{regular distribution} on $M$ is a smooth subbundle of $TM.$ A regular distribution $\xi$ is called \textit{bracket-generating} if $C^\infty(M;\xi)$ satisfies H\"{o}rmander's condition. A \textit{sub-Riemannian metric} on a bracket-generating distribution $\xi$ is a smooth family of scalar products of $\xi.$

\begin{defn}\label{Definition_SubRiemannian_Mfd}
A sub-Riemannian manifold is a triple $(M,\xi,g)$, $M$ being a smooth manifold, $\xi$ a bracket-generating distribution and $g$ a sub-Riemannian metric on $\xi.$ 
\end{defn}

Given a sub-Riemannian manifold $(M,\xi,g),$ $\xi$ is called the \textit{horizontal distribution.} Similarly, an absolutely continuous curve $\gamma: [a,b] \to M$ is called \textit{horizontal} if it is tangent to $\xi$ almost everywhere. In this case 
$$\Length (\gamma)= \int_{a}^{b} \|\dot{\gamma}(t) \|_g dt.$$

\begin{defn}
The Carnot-Carath\'{e}odory distance on a sub-Riemannian manifold is given by
$$d_{cc}(p,q) = \inf \{ \Length(\gamma)~|~ \gamma \text{ is a horizontal curve from } p \text{ to } q\}.$$
\end{defn}

A priori, it is not clear that there exists a horizontal curve between any two points on $M$, i.e.  $d_{cc}$ might not always be finite. This issue is resolved by the following theorem.

\begin{thrm}[Chow-Rashevskii]
If $M$ is connected, there exists a horizontal curve between any two points on $M.$ In particular, $d_{cc}$ is a genuine metric. Moreover, the topology induced by $d_{cc}$ is the manifold topology on $M.$
\end{thrm}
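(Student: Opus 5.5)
The plan is to follow the classical route: first prove a local statement, that every point $p\in M$ has a neighbourhood of points joinable to $p$ by short horizontal curves, and then globalise using connectedness. Since $\xi$ is a smooth subbundle, near any $p$ we can fix finitely many smooth sections $Y_1,\ldots,Y_m$ of $\xi$ spanning $\xi$. Concatenations of flows $e^{t_1 Y_{i_1}}\circ\cdots\circ e^{t_k Y_{i_k}}(p)$ are horizontal curves of length at most $C\sum|t_j|$. Define the relation $p\sim q$ if $q$ is reachable from $p$ by a horizontal curve. It is an equivalence relation, since horizontal curves can be concatenated and reversed. If every equivalence class is open, then, as the classes partition $M$ and $M$ is connected, there is a single class. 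This gives finiteness of $d_{cc}$. Symmetry and the triangle inequality are then immediate from the same concatenation and reversal.

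The core step, and the main obstacle, is showing that the orbit of $p$ contains a neighbourhood of $p$, and does so with control on lengths. I would use the standard Sussmann/Krener-type argument. Build inductively maps
$$\Phi_k(t_1,\ldots,t_k)=e^{t_k Y_{i_k}}\circ\cdots\circ e^{t_1 Y_{i_1}}(p),$$
defined near some point $\bar t$ with small entries. The aim is to increase the rank of $d\Phi_k$ at some nearby point at each stage. Suppose the image of $\Phi_k$ near a point $t^*$ is a $k$-dimensional immersed submanifold $S$ with $k<n$. If every $Y_i$ were tangent to $S$ at all points of $S$, then, as $S$ is invariant under the horizontal flows, all iterated brackets would be tangent to $S$. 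This contradicts H\"ormander's condition $\Lie(C^\infty(M;\xi))_q=T_qM$ at points of $S$. Hence some $Y_{i_{k+1}}$ is transverse to $S$ at some point of $S$ arbitrarily close to $\Phi_k(t^*)$, and appending that flow raises the rank. After $n$ steps we get a local diffeomorphism $\Phi_n$ near some $t^*$ with all $|t^*_j|$ small.

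To return to $p$ itself, I would compose with the backward flows $e^{-t^*_1 Y_{i_1}}\circ\cdots$. This yields a map from a neighbourhood of $t^*$ onto a neighbourhood of $p$, and every point in its image is joined to $p$ by a horizontal curve of length $\le C\sum_j(|t_j|+|t^*_j|)$, which is small.

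For the topology, I would compare $d_{cc}$ with a Riemannian distance $d_R$, obtained from a Riemannian metric extending $g$ on $\xi$ (after shrinking $g$ if necessary, which changes nothing topologically). Every horizontal curve has the same $g$-length and $g_R$-length, so $d_R\le d_{cc}$. Consequently $d_{cc}$-open sets are open in the manifold topology: every manifold-open set contains a $d_R$-ball, hence a $d_{cc}$-ball.

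For the converse, the construction above applied with arbitrarily small bounds on $|t|$ shows that for each $\varepsilon>0$ the ball $\{q : d_{cc}(p,q)<\varepsilon\}$ contains a manifold neighbourhood of $p$. Here I must make sure the transversality choice can be made with all times below any prescribed $\varepsilon$; it can, because the argument only needs $t^*$ in an arbitrarily small neighbourhood of $0$. Hence $d_{cc}$-balls are manifold-open neighbourhoods, and the two topologies coincide.

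Finally, $d_{cc}(p,q)=0$ implies $d_R(p,q)=0$, so $p=q$. Thus $d_{cc}$ is a genuine metric.
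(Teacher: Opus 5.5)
The paper does not prove this theorem; it quotes it as classical background and refers to \cite{SubRiemannian96,Mont02,ABB19,LeDonne25}. Your argument is the standard proof found in those references, and it is correct. It consists of: raising the rank by composing flows of a local horizontal frame (Krener-type), returning to $p$ along the reversed flows to show that a neighbourhood of $p$ is reachable by short horizontal curves, using connectedness to globalise, and comparing $d_R\le d_{cc}$.

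There is one expositional slip. In the topology paragraph you state the inclusion backwards. The inequality $d_R\le d_{cc}$ gives $B_{cc}(p,r)\subseteq B_R(p,r)$, so it shows that manifold-open sets are $d_{cc}$-open; that is what your justification actually proves. It is your ``converse'' paragraph that shows $d_{cc}$-open sets are manifold-open: using the triangle inequality and the local result at every point, every point of a $d_{cc}$-ball has a manifold neighbourhood inside the ball. Both inclusions are established, so the conclusion stands.

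Two smaller remarks. No shrinking of $g$ is needed: for any complementary subbundle $\xi'$ and any metric $h$ on it, $g\oplus h$ restricts exactly to $g$ on $\xi$. When you invoke H\"{o}rmander's condition, make explicit that near $S$ every section of $\xi$ is a $C^\infty$-combination of the $Y_i$. Such sections are therefore tangent to $S$ along $S$, so all of $\Lie(C^\infty(M;\xi))$ is tangent to $S$ there, which gives the contradiction.
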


\subsection{Carnot groups and sub-Riemannian nilmanifolds}\label{Subsection_Carnot_Sub-Riemannian}

Let $\G$ be a stratified group and $\frak{g}=V_1\oplus \ldots \oplus V_s$ its stratified Lie algebra, which we identify with the space of left-invariant vector fields. As we mentioned in Remark \ref{Remark_Stratified}, fixing a stratification of $\frak{g}$ is equivalent to fixing $V_1.$ Furthermore, this choice defines a bracket-generating distribution on $G$ given by
\begin{equation}\label{Equation_Bracket_Generating_Distribution}
\xi_g= \{ X_g~|~ X \in V_1 \},~g\in G.
\end{equation}
Consequently, if a nilmanifold $N=\Gamma \backslash \G$ is a quotient of a stratified group, it is equipped with a bracket-generating distribution which at point $q\in N$ is given by 
$$
\xi^N_q=\pi_*(\xi_{\tilde{q}}) \text{ for any } \tilde{q}\in \pi^{-1}(q).
$$

In order to define a sub-Riemannian structure on $(G,\xi)$, we need to define a family of scalar products on $\xi.$ To this end, let us introduce the following notion. 

\begin{defn}\label{Definiton_Carnot_Group}
A \textit{Carnot group} is a stratified Lie group $\G$ together with a scalar product $\langle ,\rangle_{V_1}$ on $V_1.$
\end{defn}

Given a Carnot group $\G$, we define a sub-Riemannian structure on $(G,\xi)$ by extending $\langle,  \rangle_{V_1}$ by left-invariance.  More precisely, for $g\in G$ and $X,Y \in T_g G$, let $\tilde{X},\tilde{Y}\in \frak{g}$ be the unique left-invariant vector fields such that $\tilde{X}_g=X,\tilde{Y}_g=Y.$ Then
\begin{equation}\label{Equation_Scalar_Product}
\langle X,Y \rangle_g = \langle \tilde{X}, \tilde{Y} \rangle_{V_1} .
\end{equation}
This product descends to lattice-quotients, yielding a family of scalar products $\langle,\rangle^N$ on the distribution $\xi^N$ on any nilmanifold $N=\Gamma \backslash \G.$ Slightly abusing the notation, we abbreviate $\xi^N$ to $\xi$ and $\langle,\rangle^N$ to $\langle,\rangle.$ The triples $(G,\xi,\langle,\rangle)$ and $(N,\xi,\langle,\rangle)$ are sub-Riemannian manifolds according to Definition \ref{Definition_SubRiemannian_Mfd}.  

From the viewpoint of sub-Riemannian geometry, Carnot-Carath\'{e}odory distance is the natural metric to consider on a Carnot group $\G. $ Next, we wish to relate the notion of size introduced by an $\infty$-homogeneous norm on $\G$ to that defined by the Carnot-Carath\'{e}odory distance. To this end, let $X_1,\ldots , X_n$ be a Carnot basis of $\frak{g}$ and let $B_\infty(\cdot,\cdot)$ be given by (\ref{Equation_Homogeneous_Ball}).  We denote by
$$B_{cc}(g,r)= \{ h\in G~|~ d_{cc}(g,h)<r \},$$
the open ball with respect to $d_{cc}.$ The following result and its proof are given as Theorem 11.2.3 in \cite{LeDonne25}.

\begin{thrm}[Ball-box theorem]\label{Thm_Ball_Box_Carnot}
There exists $C>1$ such that for all  $g\in G$, $r> 0$ it holds that
$$B_{cc}(g,r)\subset B_\infty(g,Cr) \subset B_{cc}(g,C^2r).$$
\end{thrm}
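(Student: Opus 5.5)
The proof of the Ball-box theorem rests on two facts: left-invariance of both $d_{cc}$ and $B_\infty$, and compatibility of both with the dilations of $\G$. By (\ref{Equation_Translate_Ball}) and the left-invariance of the sub-Riemannian metric (\ref{Equation_Scalar_Product}), left translations are isometries of $d_{cc}$. Hence $B_{cc}(g,r)=g\cdot B_{cc}(id,r)$ and $B_\infty(g,r)=g\cdot B_\infty(id,r)$, and it suffices to treat $g=id$.

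Next I introduce the dilations $\delta_\lambda$, $\lambda>0$. These are the Lie algebra automorphisms acting on $V_i$ by multiplication by $\lambda^i$, transported to $G$ via $exp$; they are group automorphisms because the stratification is graded, $[V_i,V_j]\subset V_{i+j}$. In exponential coordinates $\delta_\lambda$ acts by $x_i\mapsto \lambda^{\omega_i}x_i$, so $|\delta_\lambda g|_\infty=\lambda|g|_\infty$ and $\delta_\lambda B_\infty(id,r)=B_\infty(id,\lambda r)$. Since $(\delta_\lambda)_*$ multiplies horizontal vectors (those in $V_1$) by $\lambda$, it sends horizontal curves to horizontal curves and scales their length by $\lambda$. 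Therefore $d_{cc}(\delta_\lambda g,\delta_\lambda h)=\lambda d_{cc}(g,h)$ and $\delta_\lambda B_{cc}(id,r)=B_{cc}(id,\lambda r)$. This reduces the whole statement to the case $r=1$: I need to find $C$ with $B_{cc}(id,1)\subset B_\infty(id,C)$ and $B_\infty(id,C)\subset B_{cc}(id,C^2)$.

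For the first inclusion, I use that $d_{cc}$ induces the manifold topology (Chow-Rashevskii) and is homogeneous. The closure of $B_{cc}(id,1)$ is then bounded, and in fact compact: compactness of the sphere $\{|x|_\infty=1\}$ gives a positive minimum $c_0$ of the continuous function $d_{cc}(id,\cdot)$ there, and homogeneity yields $d_{cc}(id,x)\geq c_0|x|_\infty$. Symmetrically, a maximum $c_1$ on that sphere gives $d_{cc}(id,x)\leq c_1|x|_\infty$. These two bounds are exactly the two inclusions once $C\geq \max(1/c_0,c_1,1)$ is chosen large enough that $C^2\geq c_1 C$. The continuity of $d_{cc}(id,\cdot)$ in the manifold topology comes from Chow-Rashevskii, and its positivity off $id$ comes from $d_{cc}$ being a genuine metric.

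The main obstacle is precisely that Chow-Rashevskii identifies the topologies, but I must ensure that $d_{cc}(id,\cdot)$ is finite and continuous as a function on $\R^n$, and not merely that balls are open. Equality of topologies does give continuity of the distance function to a point in that topology, so the argument goes through. If one wanted to avoid relying on it, I would instead prove $d_{cc}(id,x)\lesssim |x|_\infty$ directly. The approach would be to write any point of $B_\infty(id,1)$ as a bounded product of exponentials of elements of $V_1$, using iterated commutator curves $[X_{k_j},X_{l_j}]$ from the Carnot basis, with polynomial control given by Proposition \ref{Proposition_Polynomial_Coordinates}. That is the technical heart of the classical proof.
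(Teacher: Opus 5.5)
The paper does not prove this statement; it cites both the result and its proof from \cite{LeDonne25} (Theorem 11.2.3). Your proposal is a correct, self-contained proof that uses only facts the paper has already stated:
\begin{enumerate}
\item Left translations are $d_{cc}$-isometries and preserve $|\cdot|_\infty$-balls, so it suffices to take $g=id$.
\item The dilations $\delta_\lambda$ are automorphisms because $[V_i,V_j]\subset V_{i+j}$. They act by $x_i\mapsto\lambda^{\omega_i}x_i$ in Carnot-basis coordinates and multiply horizontal lengths by $\lambda$. Hence both $|\cdot|_\infty$ and $d_{cc}(id,\cdot)$ are homogeneous of degree one.
\item The topology clause of Chow--Rashevskii makes $d_{cc}(id,\cdot)$ continuous, finite and positive on the compact set $\{|x|_\infty=1\}$. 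This yields $c_0|x|_\infty\le d_{cc}(id,x)\le c_1|x|_\infty$ on all of $G$.
\end{enumerate}

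One small correction is needed. Since $B_{cc}$ is open and $B_\infty$ is closed, the second inclusion requires $c_1C<C^2$, i.e.\ $C>c_1$ strictly. The statement also demands $C>1$. So choose $C>\max(1,c_1)$ with $C\ge 1/c_0$, rather than $C\ge\max(1/c_0,c_1,1)$.

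Compared with simply citing the reference, your route shows that on a Carnot group the only analytic input is the topology statement in Chow--Rashevskii. Everything else is the equivalence of degree-one homogeneous functions that are continuous and positive away from the identity. In particular, homogeneity gives the global lower bound $d_{cc}(id,x)\ge c_0|x|_\infty$, hence boundedness of $d_{cc}$-balls, without any appeal to completeness or properness of $d_{cc}$. Chow--Rashevskii is stated earlier in the paper, so there is no circularity.

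The commutator-curve construction in your last paragraph is not needed for this argument. It is what you would use to obtain $d_{cc}(id,x)\lesssim|x|_\infty$ directly, without invoking Chow--Rashevskii, whose usual proof rests on an estimate of exactly that kind.
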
  

\begin{rem}\label{Remark_Constant_Subriemannian}
A priori, the constant $C$ in Theorem \ref{Thm_Ball_Box_Carnot} depends on the choice of $X_1,\ldots, X_n.$ However, by the definition of the Carnot basis, once we fix the first $m$ vector fields $X_1,\ldots , X_m$, there are only finitely many choices of $X_{m+1}, \ldots , X_n.$ Thus, we may take $C$ which only depends on the choice of $X_1, \ldots , X_m.$ Furthermore, if we assume that $X_1,\ldots , X_m$ is an orthonormal basis of $V_1$, we may take $C$ which only depends on $\G.$ Namely, there is a compact family of choices of orthonormal bases of $V_1$ (parametrized by $O(m)$) and we may take $C$ to be the maximal constant in this family.
\end{rem}

We will make use of the following simple corollary of the ball-box theorem.

\begin{cor}\label{Corollary_Compact_CC}
A subset $K\subset G$ is compact if and only if it is closed and bounded with respect to $d_{cc}.$
\end{cor}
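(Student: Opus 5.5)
We prove Corollary \ref{Corollary_Compact_CC} directly from the ball-box theorem (Theorem \ref{Thm_Ball_Box_Carnot}). The idea is that $d_{cc}$-balls and homogeneous boxes are mutually comparable, and homogeneous boxes are compact in the manifold topology, being images of Euclidean boxes under the diffeomorphism $exp$.

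For the forward direction, let $K$ be compact. By the Chow--Rashevskii theorem, $d_{cc}$ induces the manifold topology, so $K$ is compact in the metric space $(G,d_{cc})$. Compact subsets of a metric space are closed and bounded: closedness holds because metric spaces are Hausdorff, and boundedness follows by covering $K$ with the open balls $B_{cc}(id,j)$, $j\geq 1$, and extracting a finite subcover.

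For the converse, let $K$ be closed and $d_{cc}$-bounded. Then $K\subset B_{cc}(id,r)$ for some $r>0$. By Theorem \ref{Thm_Ball_Box_Carnot},
$$K\subset B_{cc}(id,r)\subset B_\infty(id,Cr).$$
By (\ref{Equation_Box_Box}), $B_\infty(id,Cr)=exp(\Bx_{(\omega_1,\ldots,\omega_n)}(Cr))$. This is the image of a compact Euclidean box under the continuous map $exp$, where we identify $\frak{g}\cong\R^n$ via the Carnot basis. Hence $B_\infty(id,Cr)$ is compact in $G$. Since $K$ is closed (the $d_{cc}$-topology being the manifold topology) and contained in a compact set, $K$ is compact.

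No step is a genuine obstacle. The only point needing care is that "closed" means the same thing in both topologies, and this is exactly the topology statement in the Chow--Rashevskii theorem.
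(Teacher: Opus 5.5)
Your proof is correct and follows essentially the paper's argument. Both use the Chow--Rashevskii theorem to identify closedness in the two topologies, the compactness of the homogeneous boxes $B_\infty(id,r)$ as images of Euclidean boxes in exponential coordinates, and the ball-box theorem. The only difference is organizational: the paper runs one chain of equivalences through Heine--Borel in $\R^n$ and uses both inclusions of Theorem \ref{Thm_Ball_Box_Carnot}, whereas you get the forward direction from general metric-space facts and need only $B_{cc}(id,r)\subset B_\infty(id,Cr)$ for the converse.
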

\begin{proof}
By the Chow-Rashevskii theorem, $K$ is closed in the manifold topology on $G$ if and only if it is closed in the topology induced by $d_{cc}.$ Let $\Psi_{id}: \R^n \to G$ be the exponential coordinates with respect to any Carnot basis.  Since $\Psi_{id}$ is a diffeomorphism, $K$ is compact if and only if $ \Psi_{id}^{-1}(K)\subset \R^n$ is compact, which is equivalent to $ \Psi_{id}^{-1}(K)$ being closed and bounded with respect to the standard distance on $\R^n.$ On the other hand,  $ \Psi_{id}^{-1}(K)$ is closed if and only if $K$ is closed and  $\Psi_{id}^{-1}(K)$ is bounded if and only if $K\subset B_\infty(id,r)$ for some finite $r. $ The last condition is equivalent to $K$ being bounded in $d_{cc}$ by Theorem \ref{Thm_Ball_Box_Carnot}.
\end{proof}

\subsection{Equiregular structures}\label{Subsection_Equiregular_Structures}

Carnot groups and nilmanifolds obtained as their quotients are examples of a wider class of sub-Riemannian manifolds, called \textit{equiregular} sub-Riemannian manifolds, which we will now introduce. Let $(M,\xi)$ be a connected smooth manifold with a bracket generating distribution. For $i\geq 0$ an integer and $p\in M$ let
$$\xi^{(i)}_p = Span_\R (\{ [X_1, [X_2,\ldots , [X_{j-1},X_j]\ldots ]_p~|~ 1\leq j \leq i, ~X_1,\ldots , X_j \in C^\infty(M; \xi))\},$$
where $\xi^{(0)}=\{0 \}$ is the zero subbundle of $TM$ and $\xi^{(1)}=\xi.$

\begin{defn}
The point $p\in M$ is called \textit{regular} if for every $i\geq 1$ there exists a neighbourhood of $p$ in $M$ on which the map $q\to \dim (\xi^{(i)}_q)$ is constant. The distribution $\xi$ is called \textit{equiregular} if every point in $M$ is regular.
\end{defn}

Since $M$ is connected, the equiregularity of $\xi$ is equivalent to $q\to \dim (\xi^{(i)}_q)$ being a constant function for every $i\geq 1.$ In other words, this condition means that each $\xi^{(i)}$, $i\geq 1$ is a regular distribution on $M.$ Furthermore, since $\xi$ is bracket generating, it follows that the ranks of $\xi^{(i)}$ are strictly increasing until they reach $n=\rank (TM).$ More precisely,  there exists an integer $s$ such that
\begin{equation}\label{Equation_Equiregular_Inclusions}
\{0\}=\xi^{(0)} \varsubsetneq\xi^{(1)} \varsubsetneq \xi^{(2)} \varsubsetneq \ldots \varsubsetneq \xi^{(s)} =TM.
\end{equation}

\begin{defn}\label{Definition_Equiregular_Step_Q}
The unique integer $s$ defined by (\ref{Equation_Equiregular_Inclusions}) is called the \textit{step} of the equiregular distribution $\xi.$ The \textit{homogeneous dimension of} $(M,\xi)$ is given by $Q=\sum\limits_{i=1}^s i \cdot (\rank \xi^{(i)}-\rank \xi^{(i-1)}).$
\end{defn}

Definition \ref{Definition_Equiregular_Step_Q} generalizes the analogous notions for stratified groups and their nilmanifolds, see Definitions \ref{Definition_Stratified_Lie_Algebra} and \ref{Definiton_Stratified_Group}.  Namely, $(G,\xi)$ and $(N,\xi)$ defined in Subsection \ref{Subsection_Carnot_Sub-Riemannian} are examples of equiregular distributions. Indeed, one readily checks that for these distributions
$$\xi^{(i)}_p= \{ X_p~|~ X \in V_1 \oplus \ldots \oplus V_i \},$$
and thus $\rank \xi^{(i)}-\rank \xi^{(i-1)}=\dim V_i.$

Manifolds with equiregular distributions encompass many other geometrically interesting classes of spaces. For example, every contact distribution is equiregular with step two.

\begin{defn}
A sub-Riemannian manifold $(M,\xi,g)$ is called \textit{equiregular} if the distribution $\xi$ is equiregular.
\end{defn}

Continuing the above discussion,  we note that Carnot groups and their corresponding nilmanifolds are examples of equiregular sub-Riemannian manifolds.

\subsection{Generating vector fields}\label{Subsection_Generating_VFs}

Up until this point, we mostly discussed sub-Riemannian manifolds from a coordinate free perspective. In order to study Sobolev spaces and subelliptic operators, we will make use of a more concrete description of sub-Riemannian structures, based on generating vector fields. To this end, let $M$ be a smooth manifold, $\xi$ a bracket-generating distribution on $M$ and recall from the introduction that a finite set of vector fields $X=\{ X_1, \ldots , X_m \}$ is said to be \textit{generating for} $\xi$ if for every $q\in M$, $\xi_q= Span_\R (X_1(q),\ldots , X_m(q)).$ Such a set $X$ satisfies the H\"{o}rmander's condition. Indeed, one readily checks that for all $i\geq 1$ and all $q\in M$
$$\xi^{(i)}_q = Span_\R (\{ [X_{l_1}, [X_{l_2},\ldots , [X_{l_{j-1}},X_{l_j}]\ldots ]_q~|~ 1\leq j \leq i, ~l_1,\ldots , l_j \in \{ 1,\ldots , m \} \}).$$

A generating set of vector fields $X=\{ X_1, \ldots , X_m \}$ induces a sub-Riemannian structure $g_X$ on $\xi,$ defined via its norm: 
$$\| Y_q \|_{g_X}=\inf \{ (a_1^2+\ldots +a_m^2)^{\frac{1}{2}}~|~ Y_q=a_1 X_1(q)+\ldots + a_mX_m(q) \},$$
for every $q\in M, Y_q\in \xi_q.$ 

\begin{exm}\label{Example_Gen_Car}
Let $\G$ be a stratified group, $\frak{g}=V_1 \oplus \ldots \oplus V_s$ a stratification of its Lie algebra and $\xi$ a bracket-generating distribution defined by (\ref{Equation_Bracket_Generating_Distribution}). Any basis $X_1,\ldots, X_m$ of $V_1$ is a generating set of vector fields for $\xi.$ Moreover, this generating set is minimal in the sense that $m= \rank \xi.$ Assuming $\G$ is a Carnot group, i.e. a scalar product is given on $V_1$, we may take $X_1,\ldots, X_m$ to be an orthonormal basis of $V_1.$ In this case, the sub-Riemannian structure defined by (\ref{Equation_Scalar_Product}) coincides with $g_X.$ Since all the notions are local,  we obtain a generating set on any nilmanifold $N=\Gamma \backslash \G$ by projecting via $\pi : G \to N.$
\end{exm}

It turns out that every sub-Riemannian structure is of the type we just described.

\begin{thrm}\label{Theorem_Generating_VF}
Let $(M,\xi,g)$ be a smooth sub-Riemannian manifold. There exists a finite set $X=\{X_1, \ldots,X_m\}$ of smooth vector fields on $M$ which are generating for $\xi$ and which satisfy $g_X=g.$
\end{thrm}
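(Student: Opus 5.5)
The plan is to reduce to a local statement on a coordinate neighbourhood and then patch the local generators together with a partition of unity. Since $g$ is only defined on $\xi$, the essential object is the rank-$r$ Euclidean bundle $(\xi,g)$, where $r=\rank \xi$.

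\textbf{Step 1 (local frames).} Each point $q\in M$ has a neighbourhood $U$ on which $\xi$ is trivial. Applying Gram--Schmidt with respect to $g$ to a local frame of $\xi|_U$ gives a $g$-orthonormal frame $Y^U_1,\ldots ,Y^U_r$ of $\xi|_U$.

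\textbf{Step 2 (partition of unity).} Choose a locally finite cover $\{U_\alpha\}$ by such sets and a subordinate partition of unity $\{\rho_\alpha\}$. Set
$$X_{\alpha,j}=\sqrt{\rho_\alpha}\, Y^{U_\alpha}_j ,$$
extended by zero outside $U_\alpha$. To ensure smoothness, I would take $\rho_\alpha=\phi_\alpha^2/\sum_\beta \phi_\beta^2$ with $\phi_\alpha$ smooth bump functions, so that $\sqrt{\rho_\alpha}=\phi_\alpha/(\sum_\beta\phi_\beta^2)^{1/2}$ is smooth.

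\emph{Finiteness.} If $M$ is compact, the cover can be taken finite, which is the case of interest for the paper. For noncompact $M$, I would use the standard fact that any manifold admits a cover by $\dim M+1$ open sets, each a disjoint union of small trivializing sets, on which $\xi$ is trivial. This is a colouring argument as in the proof that vector bundles have finitely many trivializing opens, and it gives finitely many $\alpha$.

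\textbf{Step 3 (the induced norm equals $g$).} Fix $q$ and $v\in\xi_q$. Write $v=\sum_{\alpha,j} a_{\alpha j}X_{\alpha j}(q)$. Then $v=\sum_\alpha \sqrt{\rho_\alpha(q)}\,w_\alpha$ with $w_\alpha=\sum_j a_{\alpha j}Y^{U_\alpha}_j$, and by orthonormality $\|w_\alpha\|_g^2=\sum_j a_{\alpha j}^2$. Cauchy--Schwarz then gives
$$\|v\|_g\le\sum_\alpha\sqrt{\rho_\alpha}\,\|w_\alpha\|_g\le\Big(\sum_\alpha\rho_\alpha\Big)^{1/2}\Big(\sum_{\alpha,j}a_{\alpha j}^2\Big)^{1/2}=\Big(\sum_{\alpha,j} a_{\alpha j}^2\Big)^{1/2}.$$
Hence $\|v\|_{g_X}\ge\|v\|_g$.

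For the reverse inequality, take $a_{\alpha j}=\sqrt{\rho_\alpha(q)}\,g(v,Y^{U_\alpha}_j)$. This gives $\sum_\alpha\rho_\alpha v=v$ and $\sum a^2=\sum_\alpha\rho_\alpha\|v\|_g^2=\|v\|_g^2$. Hence $\|v\|_{g_X}=\|v\|_g$.

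The same computation shows that the $X_{\alpha j}$ span $\xi_q$, because some $\rho_\alpha(q)>0$. So $X$ is generating and $g_X=g$.

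The only delicate point is the smoothness of $\sqrt{\rho_\alpha}$, which the squared-bump choice in Step 2 handles, together with the finiteness of the family in the noncompact case.
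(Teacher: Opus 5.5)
Your argument is correct. The weights $\sqrt{\rho_\alpha}$ make $v\mapsto\bigl(\sqrt{\rho_\alpha(q)}\,g(v,Y^{U_\alpha}_j)\bigr)_{\alpha,j}$ an isometric embedding of $(\xi,g)$ into a trivial Euclidean bundle, and your $X_{\alpha j}$ are the images of the constant frame under its adjoint, so Step 3 amounts to saying that the minimal-norm preimage of $v$ is its own image. The paper gives no proof of its own and simply cites \cite[Subsection 3.1.4]{ABB19}, which (as far as I recall) uses the same mechanism: a finite trivializing cover ($\dim M+1$ sets in the noncompact case) plus a partition of unity places the bundle inside a trivial one and yields a finite global generating family, so your proof is essentially that standard argument made explicit.
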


Theorem \ref{Theorem_Generating_VF} is a special case of the material presented in \cite[Subsection 3.1.4]{ABB19}.  Namely, there is a more general notion of a sub-Riemannian structure and any finite collection of vector fields which satisfies H\"{o}rmander's condition corresponds to such a structure. In this regard, sub-Riemannian geometry can be studied purely from the viewpoint of vector fields satisfying H\"{o}rmander's condition. This perspective is commonly taken when studying hypoelliptic operators.

\subsection{Anisotropic Sobolev spaces}\label{Subsection_Anisotropic_Sobolev}

Let $M$ be a smooth manifold with a Borel measure $\mu$ defined by a smooth positive density. Let $X=\{ X_1, \ldots , X_m \}$ be a finite set of vector fields which satisfies H\"{o}rmander's condition.  For $f\in C^\infty(M)$, $K$ a Borel subset of $M$, $p\geq 1$ and $k\geq 1$ an integer,  we denote by
$$\| X^k f \|_{L^p(K)}= \sum_{1\leq i_1 , \ldots ,   i_k \leq m} \| X_{i_1}\circ \ldots \circ X_{i_k} f \|_{L^p(K)},$$
and define the \textit{anisotropic $(k,p)$-Sobolev norm of $f$ on $K$} as
$$
 \|f \|_{W^{k,p}_X(K)} = \sum_{i=1}^k \| X^i f \|_{L^p(K)} + \| f\|_{L^p(K)}.
$$
We will omit $K$ from the notation when considering functions only defined on $K.$ The \textit{anisotropic Sobolev space}  $W^{k,p}_X(M)$ is the set of all functions  $f\in L^p(M)$ which have all the weak derivatives up to order $k$ with respect to $\{ X_i\}_{i=1,\ldots , m}$ in $L^p(M).$ The norms $\|\cdot \|_{W^{k,p}_X} $ extend to $ W^{k,p}_X(M).$ One may develop the theory of anisotropic Sobolev spaces, partially in analogy to the case of the usual Sobolev spaces. This is a subject of ongoing research, which was initiated by the seminal work of H\"{o}rmander - \cite{Hormander67}. Anisotropic theory is more subtle than the standard one and any attempt at a comprehensive exposition exceeds the scope of this text. Thus, we will only state the results we will use in the exact generality in which they will be applied. 

As we mentioned in Subsection \ref{Subsection_Generating_VFs}, vector fields satisfying H\"{o}rmander's condition are more general than the sub-Riemannian structures we wish to consider.  Namely, we will always work with vector fields which generate an equiregular structure.  In this setting, we wish to compare anisotropic Sobolev norm of a function to its usual Sobolev norm.

\begin{prop}[Theorem 13 in \cite{RS76}]\label{Proposition_Sobolev_Sobolev}
Let $(M,g)$ be a closed Riemannian manifold and assume that $X_1, \ldots , X_m $ generate an equiregular distribution $\xi$ on $M$ of step $s.$ For every $p>1$ and $k\geq 1$ an integer there exists a constant $C$ such that for every $f\in C^\infty(M)$ it holds that
$$\|f \|_{W^{k,p}} \leq C \|f \|_{W^{sk,p}_X} .$$
\end{prop}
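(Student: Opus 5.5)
The plan is to avoid the deep Rothschild--Stein machinery and prove the inequality by a purely algebraic expansion. The idea is that every coordinate derivative can be written locally as a combination, with smooth coefficients, of monomials $X_{i_1}\circ\cdots\circ X_{i_j}$ with $j\leq s$. Iterating this $k$ times will express any coordinate derivative of order at most $k$ through $X$-monomials of length at most $sk$.

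\textbf{Step 1: local frame of brackets.} Fix $q\in M$. By equiregularity (in fact bracket generation suffices), the iterated brackets
$$X_I=[X_{l_1},[X_{l_2},\ldots,[X_{l_{j-1}},X_{l_j}]\ldots]],\qquad I=(l_1,\ldots,l_j),\ 1\leq j\leq s,$$
span $T_qM$, as recalled in Subsection \ref{Subsection_Generating_VFs}. Choose $n$ of them, $X_{I_1},\ldots,X_{I_n}$, that form a basis at $q$; they then form a frame on a neighbourhood $U_q$. In a chart on $U_q$ we can therefore write
$$\partial_{x_a}=\sum_{b=1}^n c_{ab}\,X_{I_b}$$
with $c_{ab}\in C^\infty(U_q)$. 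Expanding each commutator as a differential operator, e.g. $[X_i,X_j]=X_iX_j-X_jX_i$, turns $X_{I_b}$ into an integer combination of $X$-monomials of length exactly $|I_b|\leq s$. Hence
$$\partial_{x_a}=\sum_{|J|\leq s} c_{a,J}\,X_J,$$
where $X_J$ denotes the composition $X_{j_1}\circ\cdots\circ X_{j_r}$ and the $c_{a,J}$ are smooth on $U_q$.

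\textbf{Step 2: induction on the order.} I will show by induction on $|\alpha|\leq k$ that on a slightly smaller neighbourhood
$$\partial^\alpha=\sum_{|J|\leq s|\alpha|} b_{\alpha,J}\,X_J$$
with smooth coefficients $b_{\alpha,J}$. For the inductive step, compose $\partial_{x_a}=\sum_J c_{a,J}X_J$ with $\partial^\alpha=\sum_{J'} b_{\alpha,J'}X_{J'}$. The resulting term $c_{a,J}\,X_J\circ(b_{\alpha,J'}X_{J'})$ is expanded by the Leibniz rule, since each $X_j$ is a derivation. This produces terms $c_{a,J}\,(X_{J_1}b_{\alpha,J'})\,X_{J_2}X_{J'}$, where $J_2$ is a subword of $J$. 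These are again smooth coefficients times $X$-monomials of length at most $s+s|\alpha|$. This bookkeeping is the only delicate point; the essential observation is that differentiating a coefficient only shortens the monomial, so the length bound is preserved.

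\textbf{Step 3: globalisation.} By compactness, cover $M$ by finitely many such chart neighbourhoods $U_{q_1},\ldots,U_{q_N}$ and fix relatively compact $V_\beta\Subset U_{q_\beta}$ still covering $M$. Define $\|f\|_{W^{k,p}}$ (up to equivalence) as $\sum_\beta\sum_{|\alpha|\leq k}\|\partial^\alpha f\|_{L^p(V_\beta)}$, using the standard fact that any two Sobolev norms built from finite atlases on a closed manifold are equivalent. On each $V_\beta$ the coefficients $b_{\alpha,J}$ are bounded. Therefore
$$\|\partial^\alpha f\|_{L^p(V_\beta)}\leq C\sum_{|J|\leq sk}\|X_Jf\|_{L^p(M)}\leq C\,\|f\|_{W^{sk,p}_X},$$
where the $J=\emptyset$ term is the $\|f\|_{L^p}$ summand of the anisotropic norm. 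The measures $\mu$ and the Riemannian volume are comparable because both have smooth positive densities on compact $M$. Summing over the finitely many $\alpha$ and $\beta$ gives the claim.

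This argument yields the inequality for every $p\geq1$. The case $p>1$ in the statement therefore needs no singular-integral input.
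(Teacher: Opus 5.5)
Your argument is correct, but it takes a genuinely different route from the paper. The paper gives no proof of this proposition; it simply invokes Theorem 13 of \cite{RS76}. That result is much finer than what is needed here. For an arbitrary number $k$ of horizontal derivatives it controls fractional Sobolev regularity of order $k/s$. Its proof goes through lifting to free nilpotent groups, approximation by homogeneous structures and singular-integral estimates, which is where the restriction $1<p<\infty$ comes from.

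The paper only uses the case where the horizontal order $sk$ is a multiple of the step. As you observe, that case is elementary. You write each $\partial_{x_a}$ in a local frame of brackets of length at most $s$, and expand each bracket as a signed sum of compositions of the same length. You then iterate with the Leibniz rule; your bookkeeping point, that derivatives falling on coefficients only shorten the word acting on $f$, is exactly what makes the length bound $s|\alpha|$ survive.

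Your route has three advantages: it is self-contained, it holds for all $1\le p\le\infty$, and it uses only that brackets of length at most $s$ span $T_qM$ at each point, not equiregularity. So the hypothesis $p>1$ in the proposition is an artifact of the citation. It may therefore also be removable in Theorem \ref{Thm:Sobolev_RotStein}, to the extent that the Riemannian input from \cite{BPPPSS22} allows $p=1$. What you give up is only the fractional-order information in \cite{RS76}, which plays no role in this paper. The dependence of your constant on the chosen atlas and local bracket frames is harmless, since any two such Sobolev norms on a closed manifold are equivalent.
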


\begin{rem}
In Proposition \ref{Proposition_Sobolev_Sobolev}, the measure $\mu$, as well as the usual Sobolev norms, are induced by the Riemannian metric $g.$ Consequently, the constant $C$ depends on $(M,g),k,p$ and the vector fields $X_1, \ldots , X_m.$
\end{rem}

Let us narrow our focus even more and consider a stratified group $\G.$ Let $\frak{g}=V_1 \oplus \ldots \oplus V_s$ be a stratification of its Lie algebra, $Q$ its homogeneous dimension and $\xi$ a bracket-generating distribution defined by (\ref{Equation_Bracket_Generating_Distribution}). In this case we will take $\mu$ to be a Haar measure and $X_1,\ldots ,X_m$ a basis of $V_1.$ Let $\Gamma < \G$ be a lattice and $N$ the corresponding nilmanifold equipped with the measure $\nu$ induced by $\mu.$ As before, slightly abusing the notation, we identify $X_i$ and $\pi_* X_i.$ In this case, the anisotropic Sobolev norm defined in this subsection coincides with the definition given by (\ref{eq:anisotropic_Sobolev}). The following result is an anisotropic version of the classical Sobolev inequality.

\begin{thrm}\label{Theorem_Basic_Sobolev}
Let $p\geq 1$ and $k$ an integer such that $kp>Q.$ For every $f\in C^{\infty}(N) $ it holds that
\begin{equation}\label{Equation_Sobolev_inequality}
\|f\|_{L^\infty}  \leq C  \|f \|_{W^{k,p}_X},
\end{equation}
where $C$ depends only on $\G,\Gamma,\mu,k,p$ and the vector fields $X_1,\ldots,X_m.$ 
\end{thrm}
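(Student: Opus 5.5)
The plan is to lift to the group and use a local Sobolev embedding on homogeneous balls, then cover $N$ by finitely many translates. Since $N$ is compact and $\pi:G\to N$ is a smooth covering, there is a compact set $K\subset G$ with $\pi(K)=N$. By Corollary \ref{Corollary_Compact_CC} and Theorem \ref{Thm_Ball_Box_Carnot}, $K$ is covered by finitely many homogeneous balls $B_\infty(g_j,1)$, $j=1,\ldots,J$. We also fix a slightly larger ball $B_\infty(g_j,R)$, which by (\ref{Equation_Almost_Triangle}) contains every point at homogeneous distance at most $1$ from $B_\infty(g_j,1)$.

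For $f\in C^\infty(N)$ we set $\tilde f=f\circ\pi$. The vector fields $X_i$ on $G$ are $\pi$-related to those on $N$, and $\mu$ projects locally to $\nu$. Hence for each $j$ we have $\|X^i\tilde f\|_{L^p(B_\infty(g_j,R))}\le M\,\|X^i f\|_{L^p(N)}$, where $M$ is the maximal number of sheets of $\pi$ meeting a compact set (finite by discreteness of $\Gamma$). It therefore suffices to prove the local estimate
$$\sup_{B_\infty(id,1)}|u|\le C\|u\|_{W^{k,p}_X(B_\infty(id,R))}\quad\text{for } u\in C^\infty(G),\ kp>Q.$$
Translating by $g_j$ via (\ref{Equation_Translate_Ball}) leaves this estimate unchanged, because the $X_i$ are left-invariant and $\mu$ is bi-invariant. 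Taking the maximum over $j$ then gives (\ref{Equation_Sobolev_inequality}).

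The local estimate is the main obstacle, and there are two routes. The first, which I would cite, is the standard Folland--Stein Sobolev embedding on stratified groups. For $kp>Q$ the fundamental solution of the sub-Laplacian together with homogeneity gives a representation formula. In it, $u\chi$ (with $\chi$ a cutoff equal to $1$ on $B_\infty(id,1)$ and supported in $B_\infty(id,R)$) is written as a sum of convolutions of $X$-derivatives of $u\chi$ of order at most $k$ with kernels homogeneous of degree $k-Q$. Such kernels are locally in $L^{p'}$ exactly when $(Q-k)p'<Q$, i.e. $kp>Q$, and Hölder then bounds $|u|$ pointwise. The case $p=1$, where $k\ge Q+1$, is handled by a bounded kernel.

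The second route is more self-contained. We use a horizontal-path argument via Proposition \ref{Proposition_Polynomial_Coordinates} and Theorem \ref{Thm_Ball_Box_Carnot}: each point of $B_\infty(id,1)$ is joined to points of a set of positive measure by concatenations of flows of $X_1,\ldots,X_m$ of bounded length. We then integrate $u$ along these flows and average, iterating $k$ times with scaling $r^{Q}$ from (\ref{Equation_Volume_of_Balls}). Dependence of the constants only on $\G,\Gamma,\mu,k,p$ and $X_1,\ldots,X_m$ is clear from this construction. A change of basis of $V_1$ only changes the norms by bounded factors.
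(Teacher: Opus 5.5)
Your proposal is correct and takes essentially the same approach as the paper. The paper cites the classical Sobolev embedding for compactly supported functions on $\G$ and localizes with a partition of unity $\{\phi_i\}$ on $N$, applying that estimate to each $\phi_i f$. Your localization instead uses cutoffs on the lift $\tilde f$ over finitely many left-translated homogeneous balls, together with the bounded sheet multiplicity of $\pi$ over compact sets; this is an equivalent way of carrying out the same step, and your first route is exactly the classical embedding the paper cites, while your second, path-averaging route is only a sketch and is not needed.
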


The inequality (\ref{Equation_Sobolev_inequality}) for compactly supported functions on $\G$ is classical. It can be found, for example, in \cite[Chapter IV, Theorem 5.8]{Robinson91} or \cite[Corollary IV.7.4 (ii)]{VSCC92} (our setting corresponds to $d=D=Q$ in the notation of \cite{VSCC92}). Theorem \ref{Theorem_Basic_Sobolev} follows from this inequality by fixing a cover $\{U_i\}$ of $N$, along with a subordinate partition of unity $\{\phi_i \}$, and then applying the local estimate to each $\phi_i \cdot f.$

\begin{cor}\label{Corollary_Basic_Sobolev}
Let $p\geq 1, l\geq 1 $ an integer and $k$ an integer such that $kp>Q.$ For every $F=(f_1,\ldots ,f_l)$,  $f_1,\ldots, f_l \in C^{\infty}(N)$ it holds that
$$\max |F| \leq C \|F \|_{W^{k,p}_X},$$
where $C$ depends only on $\G,\Gamma,\mu,l,k,p$ and the vector fields $X_1,\ldots,X_m.$ 
\end{cor}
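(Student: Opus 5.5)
The inequality follows from Theorem \ref{Theorem_Basic_Sobolev} applied to each coordinate function. The plan is to bound $|F|$ pointwise by the coordinates of $F$, apply the scalar Sobolev inequality to each coordinate, and then pass from an $\ell^1$-type sum to the $\ell^2$-type combination that defines $\|F\|_{W^{k,p}_X}$ in (\ref{eq:Maps_anisotropic_Sobolev}).

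First, at every point $q\in N$,
$$|F(q)|=\Big(\sum_{i=1}^l f_i(q)^2\Big)^{1/2}\leq \sum_{i=1}^l |f_i(q)| \leq \sum_{i=1}^l \|f_i\|_{L^\infty}.$$
Since $F$ is continuous and $N$ is compact, $\max|F|$ is attained, so taking the supremum over $q$ gives $\max|F|\leq\sum_i\|f_i\|_{L^\infty}$. Next, Theorem \ref{Theorem_Basic_Sobolev} applies to each $f_i\in C^\infty(N)$ with the same constant $C_0$, which depends only on $\G,\Gamma,\mu,k,p$ and $X_1,\ldots,X_m$. This gives $\|f_i\|_{L^\infty}\leq C_0\|f_i\|_{W^{k,p}_X}$ for each $i$.

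Finally, the Cauchy--Schwarz inequality on $\R^l$ gives
$$\sum_{i=1}^l \|f_i\|_{W^{k,p}_X}\leq \sqrt{l}\Big(\sum_{i=1}^l\|f_i\|_{W^{k,p}_X}^2\Big)^{1/2}=\sqrt{l}\,\|F\|_{W^{k,p}_X}.$$
Chaining the three steps yields the statement with $C=\sqrt{l}\,C_0$, which depends only on the listed data together with $l$.

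There is no real obstacle here. The only point to check is that the constant in Theorem \ref{Theorem_Basic_Sobolev} is uniform in $f$, which the theorem guarantees. Note also that the $L^\infty$ norm of a smooth function on compact $N$ equals its maximum, because $\nu$ has a positive density and so every nonempty open set has positive measure.
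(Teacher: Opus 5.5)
Your proof is correct and follows the paper's argument: apply Theorem \ref{Theorem_Basic_Sobolev} to each coordinate $f_i$, then compare norms on $\R^l$. The only cosmetic difference is in that comparison: the paper bounds $\max|F|$ by $\sqrt{l}\max_i\|f_i\|_{L^\infty}$ and then uses $\max_i\|f_i\|_{W^{k,p}_X}\leq\|F\|_{W^{k,p}_X}$, whereas you go through the $\ell^1$ sum and Cauchy--Schwarz; both routes give $C=\sqrt{l}\,C_0$.
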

\begin{proof}
Using Theorem \ref{Theorem_Basic_Sobolev} we estimate
$$\max |F| = \max \sqrt{\sum_{i=1}^l f_i^2}  \leq \sqrt{\sum_{i=1}^l \| f_i\|_{L^\infty}^2} \leq \sqrt{l} \max_{1\leq i \leq l} \|f_i \|_{L^\infty} \leq C\sqrt{l} \max_{1\leq i \leq l} \|f_i \|_{W^{k,p}_X}.$$
Applying the estimate  $ \max_{1\leq i \leq l} \|f_i \|_{W^{k,p}_X} \leq  \|F \|_{W^{k,p}_X}  $ finishes the proof.
\end{proof}

\subsection{Sub-Laplacians}

Let $M$ be a closed manifold, $\mu$ a measure on $M$ with a smooth density and let $X_1,\ldots , X_m$ be vector fields on $M$ which satisfy H\"{o}rmander's condition.  The \textit{H\"{o}rmander's sub-Laplacian} associated to $(\{X_i\},\mu)$ is the operator
$$P_{\{X_i\},\mu}=X_1^* X_1 + \ldots + X_m^* X_m,$$ 
where $X_i^*$ denotes the formal $L^2$-dual of $X_i$ given by $X_i^*=-X_i-\divergence_\mu X_i.$ The properties of this operator have been extensively studied starting from the pioneering work \cite{Hormander67}, most notably in \cite{RS76}.  In particular, it was shown that $P_{\{X_i\},\mu}$ is hypoelliptic and satisfies sub-elliptic estimates. Building on this body of work, one may develop the spectral theory of $P_{\{X_i\},\mu}$ by analogy with the case of the Laplace-Beltrami operator, see, for example,  \cite[Section 2]{EL23}, \cite[Subsection 3.1]{CH26} and references therein. More precisely, this operator is symmetric and non-negative on $C^\infty (M)$ and it admits a Friedrichs extension with a compact resolvent. This further implies that it has a discrete spectrum which tends to infinity, as well as an orthonormal $L^2$-basis of eigenfunctions.  Furthermore,  hypoellipticity of $P_{\{X_i\},\mu}$ implies that the eigenfunctions of $P_{\{X_i\},\mu}$ are $C^\infty$-smooth.

We will make use of the following result, see \cite[Theorem 2.10.18.]{Street14} for a proof.

\begin{thrm}\label{Theorem_Subelliptic_Estimates}
Let $M$ be a closed manifold,  $\mu$ a measure on $M$ with a smooth density, $\{X_i\}$ a finite set of vector fields on $M$ which satisfies H\"{o}rmander's condition and $P=P_{\{X_i\},\mu}$ the associated  H\"{o}rmander's sub-Laplacian. For all integers $k\geq 1$ and all $f\in C^\infty(M)$, it holds that
$$\| f \|_{W^{k,2}_X} \leq C \|(1+P)^\frac{k}{2} f\|_{L^2},$$
where $C$ only depends on $M, \{X_i\},\mu$ and $k.$
\end{thrm}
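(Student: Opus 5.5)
The approach is induction on $k$, with the spectral theorem for the Friedrichs extension of $P$ handling the fractional powers. The operators $(1+P)^{k/2}$ are defined by functional calculus, and $\|(1+P)^{k/2}f\|_{L^2}$ is nondecreasing in $k$, since $1+P\geq 1$.

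\textbf{Step 1 (case $k=1$).} Integrating by parts with the formal adjoints $X_i^*$ gives
$$\sum_{i=1}^m \|X_i f\|_{L^2}^2=\langle Pf,f\rangle_{L^2}.$$
Hence
$$\|f\|_{W^{1,2}_X}^2\lesssim \|f\|_{L^2}^2+\langle Pf,f\rangle_{L^2}=\|(1+P)^{1/2}f\|_{L^2}^2,$$
which settles $k=1$ with a constant depending only on $m$.

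\textbf{Step 2 (case $k=2$).} Here I need the maximal estimate
$$\|X_iX_jf\|_{L^2}\leq C\bigl(\|Pf\|_{L^2}+\|f\|_{L^2}\bigr).$$
This is the genuinely hard input, due to Rothschild--Stein \cite{RS76}, and it is the step I expect to be the main obstacle. I would not reprove it from scratch but organise it as follows.

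\begin{enumerate}
\item Start from H\"{o}rmander's (or Kohn's) subelliptic gain $\|f\|_{H^\epsilon}\lesssim \|Pf\|_{L^2}+\|f\|_{L^2}$ for some $\epsilon>0$.
\item Lift the vector fields locally to free nilpotent ones.
\item Approximate the lifted operator by a sub-Laplacian on a free Carnot group. On that group, $X_iX_j\mathcal{L}^{-1}$ is a Calder\'on--Zygmund singular integral of homogeneous degree $0$, hence bounded on $L^2$.
\item Absorb the error terms, which are of lower homogeneous order, using the $H^\epsilon$ gain.
\item Patch the local estimates together with a partition of unity on the closed manifold $M$.
\end{enumerate}

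\textbf{Step 3 (general $k$ by induction).} Assume the estimate holds for all orders below $k$. Write a monomial of order $k$ as $X^{k-2}X_iX_j f$, or equivalently $X_aX_b(X^{k-2}f)$, and apply Step 2 to $g=X^{k-2}f$. This gives
$$\|X_aX_b\,g\|_{L^2}\lesssim \|P X^{k-2}f\|_{L^2}+\|X^{k-2}f\|_{L^2}.$$
Next, write
$$P X^{k-2}f=X^{k-2}Pf+[P,X^{k-2}]f.$$

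\emph{Main term.} $\|X^{k-2}Pf\|_{L^2}\lesssim \|Pf\|_{W^{k-2,2}_X}$. By the inductive hypothesis this is at most $\|(1+P)^{(k-2)/2}Pf\|_{L^2}\leq \|(1+P)^{k/2}f\|_{L^2}$.

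\emph{Commutator term.} Each term of $[P,X^{k-2}]$ contains one bracket $[X_a,X_b]$ together with at most $k-1$ further factors $X_c$. Commutators have a weighted order that is at most $k$, but they are not of pure $X$-order $k$. To control them I would use the following.
\begin{itemize}
\item An interpolation inequality with a small constant, $\|f\|_{W^{k-1,2}_X}\leq \eta\|f\|_{W^{k,2}_X}+C_\eta\|f\|_{L^2}$.
\item The fact that $[X_a,X_b]$ acting on $W^{k,2}_X$ costs two $X$-derivatives up to lower order terms. I would prove this by Step 2 applied to commutators, as in Rothschild--Stein, or via Street's scaled pseudodifferential calculus \cite{Street14}.
\end{itemize}
With these, the commutator terms are bounded by $\eta\|f\|_{W^{k,2}_X}+C_\eta\|(1+P)^{(k-1)/2}f\|_{L^2}$. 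Choosing $\eta$ small, the first part is absorbed into the left-hand side.

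\textbf{Step 4 (parity of $k$).} The scheme above passes from $k-2$ to $k$, so the bases are $k=1$ for odd $k$ and $k=2$ for even $k$. The fractional power $(1+P)^{k/2}$ enters only through the spectral-theorem identity $\|(1+P)^{(k-2)/2}(1+P)f\|_{L^2}=\|(1+P)^{k/2}f\|_{L^2}$, which holds for every $k$. Combining Steps 1--4 gives the estimate with $C$ depending only on $M,\{X_i\},\mu$ and $k$.
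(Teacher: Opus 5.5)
Your Step 3 does not close, and the failure is in the commutator term. In the elliptic case the analogous argument works because $[\Delta,\partial^{k-2}]$ has order at most $k-1$ and can be absorbed by interpolation. Here there is no such gain.

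Since $[X_a,X_b]=X_aX_b-X_bX_a$ exactly, $[P,X^{k-2}]$ equals $\sum_i\bigl(X^{k-2}X_i^2-X_i^2X^{k-2}\bigr)$ plus smooth-coefficient monomials of length at most $k-1$. So its principal part consists of $X$-monomials of full length $k$, and these do not cancel. Concretely, take the Heisenberg nilmanifold with $P=-(X_1^2+X_2^2)$ and $X_3=[X_1,X_2]$ central. Then $[P,X_2]=-2X_1X_3$. Testing on $\phi\circ\delta_\lambda$ (a fixed bump $\phi\neq 0$, group dilations $\delta_\lambda$, $\lambda\to\infty$) shows that $\|X_1X_3f\|_{L^2}\leq\eta\|f\|_{W^{3,2}_X}+C_\eta\|(1+P)f\|_{L^2}$ fails for every $\eta$ below a fixed positive constant.

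So the bound $\eta\|f\|_{W^{k,2}_X}+C_\eta\|(1+P)^{(k-1)/2}f\|_{L^2}$ that you assert for the commutator is false, not merely unproved. The interpolation inequality only helps with terms of length $k-1$. The fact that a bracket costs two $X$-derivatives is exactly what makes the commutator top order. All your two bullets give is $\|[P,X^{k-2}]f\|_{L^2}\leq C\|f\|_{W^{k,2}_X}$ with a constant that is not small. The resulting inequality $\|f\|_{W^{k,2}_X}\lesssim\|(1+P)^{k/2}f\|_{L^2}+C\|f\|_{W^{k,2}_X}$ is circular. In the subelliptic setting, commuting with $P$ does not lower the order.

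The missing ingredient is a higher-order maximal estimate that moves derivatives through a parametrix of $P$ rather than through $P$ itself:
\[
\|f\|_{W^{j+2,2}_X}\leq C\bigl(\|Pf\|_{W^{j,2}_X}+\|f\|_{L^2}\bigr)\quad\text{for every } j\geq 0.
\]
This comes from the Rothschild--Stein machinery \cite{RS76}. With the parametrix $T$ from your Step 2, one shows that the order-zero operators $X_aX_bT$ are bounded on every $W^{j,2}_X$. This is done by moving each $X$-derivative from the left of $T$ to its right, at the cost of replacing $T$ by finitely many operators of the same type, so no smallness is ever needed. With this estimate, your main-term computation closes the induction directly, with no commutator at all:
\[
\|f\|_{W^{k,2}_X}\lesssim\|Pf\|_{W^{k-2,2}_X}+\|f\|_{L^2}\lesssim\bigl\|(1+P)^{\frac{k-2}{2}}Pf\bigr\|_{L^2}+\|f\|_{L^2}\lesssim\bigl\|(1+P)^{\frac{k}{2}}f\bigr\|_{L^2}.
\]
The paper itself gives no proof of this statement; it quotes it from \cite[Theorem 2.10.18]{Street14}.
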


Assume now that $(M,\xi,g)$ is a closed, equiregular sub-Riemannian manifold. To each measure $\mu$ with a smooth density, we may associate an operator $P_\mu$ in a way which closely resembles the usual coordinate-free definition of the Laplace-Beltrami operator on a Riemannian manifold. Namely, given $f\in C^\infty(M)$, we define its horizontal gradient  $\nabla^\xi f $ as the unique vector field in $C^\infty(M ; \xi)$ such that $g(\nabla^\xi f, V)=df(V)$ for all $V \in C^\infty(M ; \xi).$ Let
$$P_\mu = - \divergence_\mu \circ \nabla^\xi.$$
We will call this operator the \textit{sub-Riemannian Laplacian associated to $\mu$.} It is an example of a H\"{o}rmander's sub-Laplacian. To see this, we use the correspondence between sub-Riemannian structures and vector fields given by Theorem \ref{Theorem_Generating_VF}. More precisely, let $X=\{X_1,\ldots , X_m \}$ be a generating set for $\xi$ such that $g_X=g.$ One computes that\footnote{This argument can be found in \cite{LL22}, see \cite[Remark 1.30]{LL22} and references therein}
$$\nabla^\xi f = \sum_{i=1}^m X_i(f) X_i,$$
and thus
\begin{equation}\label{Equation:Sublaplacians_Equality}
 P_\mu (f) =- \divergence_\mu \left(\sum_{i=1}^m X_i(f) X_i \right)= - \sum_{i=1}^m (X_i(f) \divergence_\mu X_i + X_i^2(f) ) = P_{X,\mu}(f).
\end{equation}

As with the other sub-Riemannian notions, when considering a Carnot group $\G$ or its nilmanifold $\Gamma \backslash \G$, we may take $X_1,\ldots , X_m$ to be an orthonormal basis of the first layer of the stratification of $\frak{g}$ and $\mu$ a Haar measure on $G$, see Example \ref{Example_Gen_Car}.

\section{Polynomials on stratified groups}

The central idea behind our method is local polynomial approximation of functions in anisotropic Sobolev spaces. In this section we develop the necessary technical background for applying this idea in the setting of stratified groups.

\subsection{Basics}

As before, let $\G$ be a stratified group, $\frak{g}$ its Lie algebra with weights $\omega_1,\ldots , \omega_n$ and $exp:\frak{g}\to G$ the exponential map. A \textit{polynomial on $\G$} is a function $P:G\to \R$ such that $P\circ exp:\frak{g} \to \R$ is a real polynomial. Polynomials on $\G$ form a finitely-generated algebra. A generating set of this algebra can be given by composing the dual basis to a Carnot basis with the exponential map. More precisely, let $X_1,\ldots , X_n$ be a Carnot basis of $\frak{g}$ and $\xi_1,\ldots , \xi_n:\frak{g} \to \R$ its dual basis (explicitly $\xi_i(x_1 X_1 +\ldots + x_nX_n)=x_i$).  The functions $\{\eta_i=\xi_i \circ exp^{-1} \}_{1\leq i\leq n}$ generate the polynomial algebra on $\G$, i.e. every polynomial $P$ can be uniquely written as
\begin{equation}\label{Equation_Polynomial_Expression}
P= \sum_{i_1,\ldots ,i_n \in \Z_{\geq 0}} a_{i_1,\ldots , i_n} \eta_1^{i_1} \ldots \eta_{n}^{i_n},
\end{equation}
where only finitely many $a_{i_1,\ldots , i_n}$ are non-zero.  The \textit{homogeneous degree} of $\eta_1^{i_1} \ldots \eta_{n}^{i_n}$ is given by
$$\deg_h(\eta_1^{i_1} \ldots \eta_{n}^{i_n})=\sum_{j=1}^n i_j \omega_j,$$
while the \textit{homogeneous degree} of $P$ given by (\ref{Equation_Polynomial_Expression}) is defined as
$$\deg_h(P)= \max \{ \deg_h(\eta_1^{i_1} \ldots \eta_{n}^{i_n})~|~a_{i_1,\ldots , i_n} \neq 0 \}.$$ We denote by $\mathcal{P}_d(\G)$ the set of polynomials of homogeneous degree at most $d$ on $\G.$ Similarly, we denote by $\mathcal{P}_d(\R^n)$ the set of polynomials of (usual) degree at most $d$ on $\R^n.$\footnote{Note that this notation is consistent with the fact that $(\R,+)$ is an abelian stratified group whose Lie algebra has a trivial, 1-layer, stratification.}

We wish to consider polynomials in different charts given by the exponential coordinates. To this end, recall that the \textit{exponential coordinates with respect to $X_1,\ldots, X_n$ centered at $g$} are given by the map $\Psi_g: \R^n \to G$,
$$\Psi_g(x_1,\ldots,x_n)= g \cdot exp (x_1 X_1 + \ldots + x_n X_n).$$
Note that $\Psi_g$ is a diffeomorphism such that $\Psi_g(0)=g.$ We will need the following lemma.

\begin{lem}\label{Lemma_Polynomial_Degree_Change}
If $P \in \mathcal{P}_d(\G)$ then for all $g\in G$,  $P\circ \Psi_g \in \mathcal{P}_{d}(\R^n).$ 
\end{lem}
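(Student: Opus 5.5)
The plan is to reduce everything to the group law in exponential coordinates given by Proposition \ref{Proposition_Polynomial_Coordinates}. Write $g=exp(u_1X_1+\ldots+u_nX_n)$ with $u=(u_1,\ldots,u_n)$ fixed. For $x\in\R^n$ we have $\Psi_g(x)=g\cdot exp(x_1X_1+\ldots+x_nX_n)$. By Proposition \ref{Proposition_Polynomial_Coordinates}, $\Psi_g(x)=exp(w_1X_1+\ldots+w_nX_n)$, where
$$w_i=u_i+x_i+P_i(u_1,\ldots,u_{i-1},x_1,\ldots,x_{i-1}).$$
Hence $\eta_i\circ\Psi_g(x)=w_i(x)$. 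Since $P$ is a polynomial in the $\eta_j$, the composition $P\circ\Psi_g$ is obtained by substituting $\eta_j\mapsto w_j(x)$.

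The degree count is the key step. I will show by induction on $i$ that, for fixed $u$, the function $w_i$ is a polynomial in $x$ of \emph{weighted} degree at most $\omega_i$, where $x_j$ carries weight $\omega_j$.

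1. Proposition \ref{Proposition_Polynomial_Coordinates} as stated only bounds the ordinary degree of $P_i$ by $\omega_i$. That alone does not suffice, since a product of such $w_j$'s could then have ordinary degree larger than $\deg_h$.
2. To get the weighted bound, I will use homogeneity of the group law under the dilations $\delta_\lambda(exp(\sum x_jX_j))=exp(\sum\lambda^{\omega_j}x_jX_j)$. These dilations are group automorphisms, because they come from the Lie algebra automorphism acting by $\lambda^j$ on $V_j$. So $w_i(\delta_\lambda u,\delta_\lambda x)=\lambda^{\omega_i}w_i(u,x)$, which means $P_i$ is weighted-homogeneous of degree $\omega_i$ jointly in $(u,x)$.
3. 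Freezing $u$ can only lower the weighted degree in $x$. So $w_i$ has weighted degree at most $\omega_i$ in $x$, and hence ordinary degree at most $\omega_i$ as well, since every weight is at least $1$.

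Then each monomial $\eta_1^{i_1}\cdots\eta_n^{i_n}$ becomes $\prod_j w_j^{i_j}$. This is a polynomial in $x$ of weighted degree at most $\sum_j i_j\omega_j\le d$. Because weighted degree dominates ordinary degree, its ordinary degree is at most $d$. Summing over the finitely many monomials gives $P\circ\Psi_g\in\mathcal{P}_d(\R^n)$.

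The main obstacle is the homogeneity step. I would justify it either from the Baker--Campbell--Hausdorff formula, where each bracket term of total order $r$ in $V_1$-components lands in the appropriate layer, or directly by noting that $\delta_\lambda$ is an automorphism commuting with $exp$. If the cited Proposition is read so that ``degree at most $\omega_i$'' already means homogeneous degree (as in \cite{BLU_Book}), this step is immediate.
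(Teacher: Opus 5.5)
Your argument is correct, and its skeleton matches the paper's: write $\Psi_g(x)=\exp(w_1X_1+\ldots+w_nX_n)$ using Proposition \ref{Proposition_Polynomial_Coordinates}, then substitute $\eta_j\mapsto w_j(x)$ into the monomials of $P$.

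However, your step 1 is mistaken, and so the dilation argument is not needed. Freezing $u$ in $\deg P_j\le\omega_j$ shows that each $w_j$ has ordinary degree at most $\omega_j$ in $x$. Then $\prod_j w_j^{i_j}$ has ordinary degree at most $\sum_j i_j\omega_j=\deg_h(\eta_1^{i_1}\cdots\eta_n^{i_n})\le d$. Since $\mathcal{P}_d(\R^n)$ is defined by ordinary degree, this is already the required conclusion, and it is exactly how the paper finishes in one line.

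Your homogeneity step is valid. The dilations $\delta_\lambda$ are automorphisms, so $w_i$ is weighted-homogeneous of degree $\omega_i$ in $(u,x)$. But it proves more than the lemma asks: $x\mapsto P(g\cdot\exp(\sum_j x_jX_j))$ has weighted degree at most $d$, i.e.\ left translations preserve $\mathcal{P}_d(\G)$. The paper never uses this refinement. Proposition \ref{Proposition_Barcode_Bound} only needs the ordinary-degree bound $P\circ\Psi_g\in\mathcal{P}_d(\R^n)$ in order to apply the Euclidean estimate of \cite{BPPPSS22}.
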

\begin{proof}
Let $g= exp(a_1 X_1 + \ldots +a_n X_n).$ Proposition \ref{Proposition_Polynomial_Coordinates} implies that there exist $P_i \in \mathcal{P}_{\omega_i}(\R^{2i-2})$ for $1\leq i \leq n$ such that for all $(x_1,\ldots, x_n) \in \R^n$
$$g\cdot exp (x_1 X_1+\ldots +x_n X_n)=exp(y_1 X_1+\ldots +y_n X_n),$$
where $y_i=a_i+x_i+P_i(a_1,\ldots, a_{i-1},x_1,\ldots,x_{i-1}).$ By definition of $\Psi_g$, we have that
$$P\circ \Psi_g(x_1,\ldots ,x_n) = P(g\cdot exp (x_1 X_1+\ldots +x_n X_n))=P(exp(y_1 X_1+\ldots +y_n X_n))=$$
$$= P\circ exp \left( (a_1+x_1)X_1+  \ldots + (a_n+x_n+ P_n(a_1,\ldots, a_{n-1},x_1,\ldots,x_{n-1})) X_n \right),$$
and the claim follows since each $P_i$ has degree at most $\omega_i.$
\end{proof}

\subsection{Barcode of a polynomial on a box}

Let $\G$ be a stratified group, $\frak{g}$ its Lie algebra, $X_1, \ldots , X_n$ a Carnot basis of $\frak{g}.$ Recall that $B_\infty(g,r)$ denotes the homogeneous $\infty$-ball with respect to $X_1, \ldots , X_n$, while $\mathcal{N}_0(f)$ denotes the total number of bars in the barcode of $f$ (in all degrees).

\begin{prop}\label{Proposition_Barcode_Bound} 
For any degree $d\geq 0$,  $P\in \mathcal{P}_d(\G)$ and any $r\geq 0, g\in G$ it holds that
$$ \mathcal{N}_{0}( P|_{B_\infty(g,r)} ) \leq \frac{(d+1)^n+1}{2}.$$
\end{prop}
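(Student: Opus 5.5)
Pull the problem back to a Euclidean box and then use the standard Morse-theoretic count of critical points of a polynomial. By (\ref{Equation_Translate_Ball}) and (\ref{Equation_Box_Box}), $B_\infty(g,r)=\Psi_g(\Bx_{(\omega_1,\ldots,\omega_n)}(r))$, where $\Psi_g$ is the exponential chart centered at $g$. Since $\Psi_g$ is a diffeomorphism, (\ref{Equation_Barcode_invariance}) gives
$$\mathcal{B}(P|_{B_\infty(g,r)})=\mathcal{B}\big((P\circ\Psi_g)|_{\Bx(r)}\big).$$
By Lemma \ref{Lemma_Polynomial_Degree_Change}, $p:=P\circ\Psi_g\in\mathcal{P}_d(\R^n)$. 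The statement therefore reduces to a purely Euclidean one: for a polynomial $p$ of degree at most $d$ on $\R^n$ and a closed box $B=\prod[a_i,b_i]$, the barcode of $p|_B$ has at most $\frac{(d+1)^n+1}{2}$ bars. The degenerate case $r=0$ is trivial, since then $B$ is a point and there is a single infinite bar.

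For the Euclidean statement, I would follow the argument of \cite{BPPPSS22} (their bound on barcodes of polynomials on cubes). First, perturb $p$ by a small generic linear or quadratic term of degree at most $d$. The goal is that $p_\epsilon|_B$ is a Morse function in the stratified sense. That is, for every face $\sigma$ of $B$ (of every dimension, including the vertices and the interior), the restriction $p_\epsilon|_\sigma$ has only nondegenerate critical points. By Proposition \ref{Proposition_BarCount_Stability}, $\mathcal{N}_0$ is lower semicontinuous in the right sense: any bar of $p$ of positive length survives a small perturbation, and the bars of length zero do not count. So it suffices to bound the number of bars for $p_\epsilon$. For such a stratified Morse function, stratified Morse theory on manifolds with corners gives the following. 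Each bar endpoint is a critical value at which the sublevel set changes topology, and only the critical points of faces whose "inward" condition holds contribute. The total number of bar endpoints, counted with multiplicity, is therefore at most the number $K$ of contributing critical points. Since every finite bar uses two endpoints and the single infinite bar (the box is contractible, $b_*=\dim H_*(B)=1$) uses one, we get $\#\text{bars}\le\frac{K+1}{2}$.

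It remains to show $K\le (d+1)^n$, and this is the main obstacle. On a face $\sigma$ of dimension $j$, obtained by fixing $n-j$ coordinates at endpoints, the critical points solve $j$ equations of degree at most $d-1$. By B\'ezout, there are at most $(d-1)^j$ isolated solutions, and genericity makes all of them isolated. Summing over the faces, of which there are $\binom{n}{j}2^{n-j}$ of dimension $j$, gives
$$\sum_j\binom{n}{j}2^{n-j}(d-1)^j=(d+1)^n.$$
This is exactly the claimed count. The vertices contribute the $2^n$ term, and they count as critical points of their zero-dimensional strata. I would verify carefully that the genericity of the perturbation can be achieved within $\mathcal{P}_d(\R^n)$ simultaneously for all faces. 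Adding $\sum_i c_ix_i$ with generic $c$ suffices for $d\ge2$ by Sard's theorem applied to the gradient maps on each face. For $d\le1$, the bound is checked directly.
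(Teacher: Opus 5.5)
Your reduction via $B_\infty(g,r)=\Psi_g(\Bx_{(\omega_1,\ldots,\omega_n)}(r))$, the invariance (\ref{Equation_Barcode_invariance}) and Lemma \ref{Lemma_Polynomial_Degree_Change} is exactly the paper's proof, which then simply cites Proposition 4.12 and Remark 4.13 of \cite{BPPPSS22} for the Euclidean bound on a box; your face-by-face Morse/B\'ezout count, giving $\sum_j\binom{n}{j}2^{n-j}(d-1)^j=(d+1)^n$ critical points and hence at most $\frac{(d+1)^n+1}{2}$ bars, reconstructs the argument behind that bound, so the proposal is correct. One detail to add to your genericity step: a stratified Morse function on the box also needs $\partial_k p_\epsilon(x)\neq 0$ for every coordinate $k$ fixed on a face $\sigma$, at each critical point $x$ of $p_\epsilon|_\sigma$ (so that the inward/outward condition is well defined), and this follows from choosing those remaining coefficients $c_k$ generically, in addition to applying Sard on each face.
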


\begin{proof}

Recall from (\ref{Equation_Box_Box}) that $B_\infty(id,r)=exp (\Bx_{(\omega_1,\ldots ,\omega_N)}(r)).$ Thus
$$B_\infty(g,r)=g\cdot B_\infty(id,r)=g \cdot exp (\Bx_{(\omega_1,\ldots ,\omega_N)}(r)) = \Psi_g (\Bx_{(\omega_1,\ldots ,\omega_N)}(r)).$$
Applying (\ref{Equation_Barcode_invariance}), we obtain
$$\mathcal{N}_0(P|_{B_\infty(g,r)} )=\mathcal{N}_0(P\circ \Psi_g|_{\Bx_{(\omega_1,\ldots, \omega_n)}(r)} ).$$
On the other hand, by Lemma \ref{Lemma_Polynomial_Degree_Change}, $P\circ \Psi_g\in \mathcal{P}_d (\R^n).$ Thus, the statement reduces to the case of a polynomial on a box in $\R^n,$ which has been treated in \cite{BPPPSS22}.  Indeed, by Proposition 4.12 and Remark 4.13 in \cite{BPPPSS22}, we have that 
$$\mathcal{N}_{0}(P\circ \Psi_g|_{\Bx_{(\omega_1,\ldots, \omega_n)}(r)} )  \leq  \frac{(d+1)^n+1}{2},$$
and the claim follows.
\end{proof}

\subsection{Morrey-Sobolev inequality}

Let $\G$ be a Carnot group, $X_1,\ldots X_n$ a Carnot basis of $\frak{g}=V_1\oplus \ldots \oplus V_s$, such that $X_1,\ldots, X_m$ is a basis of $V_1.$ The following result is proven in \cite[Theorem 5.11]{LW00}, see also \cite[Theorem 5.11]{LW04} and \cite[Appendix A]{Irene26}.

\begin{thrm}[\cite{LW00,LW04}]\label{Theorem_Polynomial_Approximation}
Let $p\geq 1$ and $k$ an integer such that $kp>Q.$  There exists $C>0$ such that for every ball $B=B_{cc}(g,r)$ and every $f\in C^\infty(B)$ there exists $P\in  \mathcal{P}_{k-1}(\G)$ such that
$$d_{C^0}(f,P|_B)\leq C r^{k-\frac{Q}{p}} \| X^k f \|_{L^p}  . $$
\end{thrm}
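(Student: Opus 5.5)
The plan is the classical route to Morrey-type polynomial approximation: a Sobolev representation formula with an averaged Taylor polynomial, combined with the scaling structure of $\G$. First I would reduce to the unit ball by left translation and dilation. Let $\delta_r$ be the dilations $\delta_r(exp(\sum x_iX_i))=exp(\sum r^{\omega_i}x_iX_i)$, which are group automorphisms. Left translations preserve $d_{cc}$, the vector fields $X_1,\ldots,X_m$ and $\mu$. Under $\delta_r$ the ball $B_{cc}(id,1)$ goes to $B_{cc}(id,r)$, $X_j(f\circ\delta_r)=r\,(X_jf)\circ\delta_r$, and $\mu$ scales by $r^Q$ by (\ref{Equation_Haar_Exp}). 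So for $\tilde f=f\circ L_g\circ\delta_r$ on $B_{cc}(id,1)$ we get $\|X^k\tilde f\|_{L^p}=r^{k-Q/p}\|X^kf\|_{L^p(B)}$. Moreover $\mathcal{P}_{k-1}(\G)$ is preserved by $\delta_r$ (homogeneous degree is exactly the scaling degree) and by left translations (Proposition \ref{Proposition_Polynomial_Coordinates} and Lemma \ref{Lemma_Polynomial_Degree_Change}). Hence it suffices to prove $d_{C^0}(f,P)\le C\|X^kf\|_{L^p(B_1)}$ on the unit ball $B_1=B_{cc}(id,1)$.

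On $B_1$ I would build $P$ as an averaged stratified Taylor polynomial. Fix a smooth bump $\phi$ supported in a small ball $B'\subset B_1$ with $\int\phi=1$. For $y\in B'$ take the left Taylor polynomial $T^{k-1}_yf$ of homogeneous degree $\le k-1$ (defined via $X$-derivatives along horizontal paths / the Folland–Stein Taylor formula), and set $P=\int T^{k-1}_yf\,\phi(y)\,dy$. Since $\mathcal{P}_{k-1}(\G)$ is finite-dimensional and the coefficients depend linearly on $y$, $P\in\mathcal{P}_{k-1}(\G)$.

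The core step is a representation formula: for $x\in B_1$,
$$|f(x)-P(x)|\le C\int_{B_1}\frac{|X^kf(y)|}{d_{cc}(x,y)^{Q-k}}\,d\mu(y).$$
To get it, I would integrate the Folland–Stein remainder along horizontal curves joining $y\in B'$ to $x$, using Chow–Rashevskii together with the ball-box theorem (Theorem \ref{Thm_Ball_Box_Carnot}) to choose a family of curves of length $\lesssim d_{cc}(x,y)$ whose Jacobian is controlled, so the averaging in $y$ yields the Riesz-type kernel $d_{cc}^{k-Q}$. This is the main obstacle: in the Riemannian case one uses straight segments inside a star-shaped domain, but here the CC balls need not be star-shaped in the right sense, so the path family must be built by concatenating flows of $X_1,\ldots,X_m$. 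The kernel must also be checked, using Theorem \ref{Thm_Ball_Box_Carnot}, to be comparable to $d_{cc}^{k-Q}$ up to enlarging the ball by a constant factor. Given that enlargement, I would prove the statement for the slightly larger ball, or apply it to concentric balls and cover.

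Finally, Hölder's inequality gives $|f(x)-P(x)|\le C\|X^kf\|_{L^p}\bigl(\int_{B_1}d_{cc}(x,y)^{(k-Q)p'}d\mu\bigr)^{1/p'}$. By (\ref{Equation_Volume_of_Balls}) and the ball-box theorem, $\mu(B_{cc}(x,\rho))\sim\rho^Q$, so a dyadic shell decomposition shows the integral is finite exactly when $(Q-k)p'<Q$, i.e. $kp>Q$. For $p=1$ the kernel is bounded because $k>Q$, and the bound is immediate. This gives the uniform bound on $B_1$, and the scaling step gives the factor $r^{k-Q/p}$.
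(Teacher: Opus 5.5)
Your reduction to $B_1=B_{cc}(id,1)$ by left translation and dilation is correct, and so is your final H\"older/shell computation. The overall scheme, a pointwise Riesz-potential bound for $f-P$ followed by H\"older with $kp>Q$, is also the scheme of the Lu--Wheeden results that the paper cites in place of a proof.

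One small correction: invariance of $\mathcal{P}_{k-1}(\G)$ under left translations is true, but it comes from the homogeneity of the Baker--Campbell--Hausdorff polynomials, i.e.\ $\delta_r(gh)=\delta_r(g)\delta_r(h)$. It does not follow from Lemma~\ref{Lemma_Polynomial_Degree_Change}, which only controls the Euclidean degree.

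The genuine gap is the step that carries all the content, the bound $|f(x)-P(x)|\lesssim\int_{B_1}|X^kf(y)|\,d_{cc}(x,y)^{k-Q}\,d\mu(y)$, and the mechanism you propose for it does not work. The Taylor remainder along a single horizontal path is harmless: expand $f-T^{k-1}_yf$, whose horizontal derivatives of order $<k$ vanish at $y$, as an iterated integral. The trouble is the change of variables in $y$ with $x$ fixed.
\begin{itemize}
\item A path made by concatenating flows of $X_1,\ldots,X_m$ and ending at $x$ has last piece $\sigma\mapsto x\exp(-\sigma X_j)$.
\item So, whatever the times $t_i(y)$ are, all these last pieces lie in $x\cdot\exp(V_1)$. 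This is an $m$-dimensional submanifold, hence $\mu$-null as soon as $s\geq 2$; likewise the last $j$ pieces sweep a set of dimension at most $j$.
\item The averaged measure is therefore singular near $x$. You are left with integrals of $|X^kf|$ along curves through $x$, which $\|X^kf\|_{L^p}$ does not control. There is no Jacobian bound and no kernel $d_{cc}^{k-Q}$.
\end{itemize}
The Euclidean contraction $y\mapsto x+\tau(y-x)$ does not transfer either. The curve $\tau\mapsto x\,\delta_\tau(x^{-1}y)$ is not horizontal, and differentiating along it brings in $X_{m+1},\ldots,X_n$, i.e.\ derivatives of homogeneous order larger than $k$.

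What you need is either a genuine pencil of horizontal curves reaching $x$ from all directions at all scales (a nontrivial construction), or a route that never performs this change of variables, in the spirit of the Franchi--Lu--Wheeden/Lu--Wheeden theory behind the cited theorem. One such route:
\begin{itemize}
\item Write every $h$ with $|h|_\infty\lesssim r$ as a product of boundedly many $\exp(t_iX_{j_i})$ with $j_i\leq m$ and $|t_i|\lesssim r$ \cite{FS82}, and average over the \emph{starting} point. Right translations preserve the bi-invariant Haar measure, so this gives the first-order Poincar\'e inequality on balls with no Jacobian issue.
\item Iterate it, choosing $P$ so that the averages over $B$ of $X^J(f-P)$ vanish for all horizontal words $|J|\leq k-1$. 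This gives $\inf_{P\in\mathcal{P}_{k-1}(\G)}\int_B|f-P|\,d\mu\lesssim r^k\int_{cB}|X^kf|\,d\mu$.
\item Telescope over $B_{cc}(x,2^{-j}r)$, comparing consecutive polynomials by equivalence of norms on the finite-dimensional, translation- and dilation-invariant space $\mathcal{P}_{k-1}(\G)$. This yields $\sum_j(2^{-j}r)^{k-Q/p}\|X^kf\|_{L^p}$, which converges precisely because $kp>Q$.
\end{itemize}
Alternatively, the first-order formula follows from the fundamental solution of the sub-Laplacian.

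Your treatment of the enlargement is also not valid. $f$ is only defined on $B$ and the norm on the right is over $B$, so you cannot pass to a larger ball, and covering $B$ by smaller balls gives a different polynomial on each. The enlargement is removed by chaining inside $B$. CC balls are John domains: if $z$ lies on a geodesic from the centre to $x$, then $B_{cc}(z,d_{cc}(z,x))\subset B$. So you join a fixed central ball to $x$ by balls with geometrically decaying radii whose enlargements stay in $B$, and compare consecutive polynomials as above; the same convergent series bounds the total. This Boman-chain step is part of what the cited result provides and is missing from your plan.
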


\subsection{Square roots of polynomials}

Theorem \ref{Thm:Sobolev_Nilmanifolds} applies to the barcodes of $\pm |F|. $ In order to prove this theorem, we will approximate $F$ by a polynomial map, which in turn means that $|F|$ is approximated by the square root of a positive polynomial, while $-|F|$ is approximated by a negative of the square root of a positive polynomial. With this in mind, we wish to explore the properties of square roots of polynomials and their negatives. To this end, as before let $\G$ be a stratified group, $\frak{g}$ its Lie algebra, $X_1, \ldots , X_n$ a Carnot basis of $\frak{g}$ and $B_\infty(g,r)$ the homogeneous $\infty$-ball with respect to $X_1, \ldots , X_n.$ For every integer $d\geq 0$, let
$$\mathcal{S}_d(\G)= \{ \pm \sqrt{P} ~|~ P \in \mathcal{P}_{2d}(\G), P\geq 0 \}.$$
The following is a direct corollary of Proposition \ref{Proposition_Barcode_Bound}.
\begin{cor}\label{Corollary_Barcode_Bound}
For any degree $d\geq 0$,  $S\in \mathcal{S}_d(\G)$ and any $r\geq 0, g\in G$ it holds that
$$ \mathcal{N}_{0}( S|_{B_\infty(g,r)} ) \leq \frac{(2d+1)^n+1}{2}.$$
\end{cor}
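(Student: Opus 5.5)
The plan is to reduce $S$ to the polynomial $P$ under it, because the barcode counts of a function and of a strictly monotone reparametrization of it agree. Write $S=\pm\sqrt{P}$ with $P\in\mathcal{P}_{2d}(\G)$ and $P\geq 0$, and set $K=B_\infty(g,r)$.

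First I will show that for a continuous $h$ on a $0$-moderate compact $K$ and a strictly increasing homeomorphism $\phi$ of the range, the persistence modules $V_0(h)$ and $V_0(\phi\circ h)$ are related by reparametrization: $\{\phi\circ h\leq t\}=\{h\leq \phi^{-1}(t)\}$. Consequently the bars of $\phi\circ h$ are the images of the bars of $h$ under $\phi$. In particular the total number of bars, $\mathcal{N}_0$, is unchanged, since $\mathcal{N}_0$ does not care about bar lengths. The same holds for a strictly decreasing $\phi$, for example $t\mapsto -\sqrt{t}$. This case needs a separate argument: the sublevel sets of $-\sqrt{P}$ are superlevel sets of $P$. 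So for the sign $-$ I would instead use $-\sqrt{P}=\phi\circ(-P)$ with $\phi(t)=-\sqrt{-t}$, which is increasing on $(-\infty,0]$.

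With that reduction in hand, the two cases go as follows.

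\textbf{The $+$ case.} Here $\sqrt{P}=\phi\circ P$ with $\phi=\sqrt{\cdot}$ on $[0,\infty)$, which is increasing and a homeomorphism onto its image. Hence $\mathcal{N}_0(\sqrt{P}|_K)=\mathcal{N}_0(P|_K)$.

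\textbf{The $-$ case.} Here $-\sqrt{P}=\phi\circ(-P)$ with $\phi$ as above, and $-P\in\mathcal{P}_{2d}(\G)$. Hence $\mathcal{N}_0(-\sqrt{P}|_K)=\mathcal{N}_0(-P|_K)$.

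In both cases, Proposition \ref{Proposition_Barcode_Bound} applied with degree $2d$ bounds the count by $\frac{(2d+1)^n+1}{2}$, which is the claimed bound.

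The main obstacle is to justify the reparametrization identity at the level of barcodes, including the endpoint conventions $[a,b)$. Two things need care here. First, $\phi$ is a homeomorphism only between the ranges $[0,\infty)$ and $[0,\infty)$, so the persistence modules agree after extending to all $t$: both are zero for $t<0$. Second, upper semi-continuity is preserved because $\phi$ is continuous and increasing. Given both, uniqueness in the structure theorem (Theorem \ref{Theorem_Structure_PM}) gives the bijection of bars $[a,b)\mapsto[\phi(a),\phi(b))$. Bars are nonempty, so none are lost, and infinite bars map to infinite bars.
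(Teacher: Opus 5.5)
Your proposal is correct and follows essentially the paper's route. The paper also notes that $\{P\le t\}=\{S\le \sqrt{t}\}$ for $t\ge 0$. For $S=-\sqrt{P}$ it notes that $\{-P\le t\}=\{S\le -\sqrt{-t}\}$ for $t\le 0$, and that this set is the whole ball for $t>0$. So the bars correspond under a monotone change of the filtration parameter, their number is unchanged, and Proposition \ref{Proposition_Barcode_Bound} in degree $2d$ finishes the proof.

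Two small cleanups. First, delete the stray assertion that the same holds for strictly decreasing $\phi$: it is false in general. For example, on $[0,1]$ the functions $x(1-x)$ and $-x(1-x)$ have two bars and one bar respectively, though you rightly never use this. Second, $\mathcal{N}_0$ counts bars in all homological degrees, so run your reparametrization argument for every $V_k$, not only $V_0$. It goes through verbatim, because $B_\infty(g,r)$ is homeomorphic to a box.
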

\begin{proof}
We first treat the case $S=\sqrt{P}$ for $P\in \mathcal{P}_{2d}(\G).$ Since $P\geq 0$ it follows that
$$\{ P \leq t \}= \{ S^2 \leq t \} = \begin{cases}
      \{ S\leq \sqrt{t} \}, & t\geq 0 \\
      \emptyset, & t<0 \\
      \end{cases}.$$
Applying $\check{H}_*(\cdot)$ we get that
$$\mathcal{B}(P)= \{ [a^2,b^2)~|~[a,b)\in \mathcal{B}(S),~a<b<+\infty \}\cup \{ [c^2,+\infty)~|~[c,+\infty)\in \mathcal{B}(S) \}.$$
In particular the barcodes $\mathcal{B}(P)$ and $\mathcal{B}(S)$ have the same number of bars. Since $P\in \mathcal{P}_{2d}(\G)$ the claim follows from Proposition \ref{Proposition_Barcode_Bound}. 

Assume now that $S=-\sqrt{P}$ for $P\in \mathcal{P}_{2d}(\G).$ Since $-P\leq 0$ it follows that
$$\{ -P \leq t \}= \{ -S^2 \leq t \} = \begin{cases}
      B_\infty(g,r) & t>0 \\
      \{ S\leq -\sqrt{-t} \}, & t\leq 0 \\
      \end{cases}.$$ 
Similarly as before, applying $\check{H}_*(\cdot)$ yields
$$\mathcal{B}(-P)= \{ [-a^2,-b^2)~|~[a,b)\in \mathcal{B}(S),~a<b<+\infty \}\cup \{ [-c^2,+\infty)~|~[c,+\infty)\in \mathcal{B}(S) \},$$
and the barcodes $\mathcal{B}(-P)$ and $\mathcal{B}(S)$ have the same number of bars. Since $-P\in \mathcal{P}_{2d}(\G)$ the claim again follows from Proposition \ref{Proposition_Barcode_Bound}. 
\end{proof}

Let us now fix a scalar product on the first layer of stratification of $\frak{g}$, rendering $\G$ a Carnot group. The following is a direct corollary of the Morrey-Sobolev inequality.

\begin{cor}\label{Corollary_Polynomial_Approximation}
Let $p\geq 1, l\geq 1$ an integer and $k$ an integer such that $kp>Q.$  There exists $C>0$ such that for every ball $B=B_{cc}(g,r)$ and every $F=(f_1,\ldots , f_l), f_1,\ldots , f_l \in C^\infty(B)$ there exists $S\in  \mathcal{S}_{k-1}(\G)$ such that
$$d_{C^0}(|F|,S|_B)\leq C r^{k-\frac{Q}{p}} \| F \|_{W^{k,p}_X}  . $$
\end{cor}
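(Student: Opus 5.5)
Apply Theorem \ref{Theorem_Polynomial_Approximation} to each coordinate separately, then pass from the polynomial map to its norm. For each $1\leq i\leq l$, the theorem gives $P_i\in\mathcal{P}_{k-1}(\G)$ with
$$d_{C^0}(f_i,P_i|_B)\leq C_0 r^{k-\frac{Q}{p}}\|X^k f_i\|_{L^p}.$$
Set $\mathbf{P}=(P_1,\ldots,P_l)$ and $P=P_1^2+\ldots+P_l^2$. We then show that $P\in\mathcal{P}_{2k-2}(\G)$, so that $S:=\sqrt{P}\in\mathcal{S}_{k-1}(\G)$.

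\textbf{Degree of $P$.} The homogeneous degree is subadditive under products, since in the expansion (\ref{Equation_Polynomial_Expression}) the degree of a product of monomials is the sum of their degrees. It is also subadditive under sums. Hence $\deg_h(P_i^2)\leq 2(k-1)$, and therefore $P\geq 0$ lies in $\mathcal{P}_{2(k-1)}(\G)$.

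\textbf{The $C^0$ estimate.} Pointwise on $B$, the reverse triangle inequality for the Euclidean norm on $\R^l$ gives
$$\bigl||F(x)|-|\mathbf{P}(x)|\bigr|\leq |F(x)-\mathbf{P}(x)|.$$
Moreover $|\mathbf{P}(x)|=\sqrt{P(x)}=S(x)$. Therefore
$$d_{C^0}(|F|,S|_B)\leq \Bigl(\sum_i d_{C^0}(f_i,P_i|_B)^2\Bigr)^{1/2}\leq C_0 r^{k-\frac{Q}{p}}\Bigl(\sum_i\|X^kf_i\|_{L^p}^2\Bigr)^{1/2}.$$

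\textbf{Conclusion.} Since $\|X^kf_i\|_{L^p}\leq \|f_i\|_{W^{k,p}_X}$, the final factor is bounded by $\|F\|_{W^{k,p}_X}$ by definition (\ref{eq:Maps_anisotropic_Sobolev}). This gives the claim with $C=C_0$, where all norms are taken on $B$.

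There is no substantial obstacle. The only point requiring care is checking that squaring and summing keeps us inside $\mathcal{P}_{2(k-1)}(\G)$, which is exactly the subadditivity of $\deg_h$ noted above.
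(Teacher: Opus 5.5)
Your proof is correct and follows essentially the paper's argument: approximate each $f_i$ by some $P_i\in\mathcal{P}_{k-1}(\G)$ using Theorem \ref{Theorem_Polynomial_Approximation}, set $S=\sqrt{\sum_i P_i^2}$, and compare $|F|$ with $S$ via the reverse triangle inequality for the Euclidean norm. The differences are minor. You justify $\sum_i P_i^2\in\mathcal{P}_{2(k-1)}(\G)$ explicitly through additivity of $\deg_h$ on monomials, where the paper says ``by definition''. You also keep the $\ell^2$-sum rather than bounding by $\sqrt{l}\max_i$, which gives the slightly better constant $C_0$ in place of the paper's $\sqrt{l}\,C_0$.
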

\begin{proof}
Let $P_1,\ldots , P_l \in \mathcal{P}_{k-1}(\G)$ be polynomials which approximate $f_1,\ldots ,f_l$ given by Theorem \ref{Theorem_Polynomial_Approximation} and take $S=\sqrt{\sum_{i=1}^l P_i^2}.$ By definition $S \in \mathcal{S}_{k-1}(\G).$ We estimate
$$d_{C^0}(|F|,S|_B)= \max_B \left(\sqrt{\sum_{i=1}^l f_i^2} - \sqrt{\sum_{i=1}^l P_i^2}\right) \leq \max_B \sqrt{\sum_{i=1}^l (f_i - P_i)^2}\leq $$
$$\leq \sqrt{\sum_{i=1}^l \max_B (f_i - P_i)^2} \leq \sqrt{l} \max_{1\leq i \leq l} d_{C^0}(f_i, P_i|_B)\leq \sqrt{l} C' r^{k-\frac{Q}{p}} \max_{1\leq i \leq l} \| X^k f_i \|_{L^p},$$
where $C'$ is the constant given by Theorem \ref{Theorem_Polynomial_Approximation}. Since $\max_{1\leq i \leq l} \| X^k f_i \|_{L^p}\leq \| F \|_{W^{k,p}_X}$, setting $C=\sqrt{l}C'$ finishes the proof.
\end{proof}

\section{Proof of Theorem \ref{Thm:Sobolev_Nilmanifolds}}

Let $X_1,\ldots, X_n$ be a Carnot basis of $\frak{g}$ obtained by extending $X_1,  \ldots , X_{m}$ and let us fix an auxiliary scalar product on the first layer of stratification of $\frak{g}$, rendering $\G$ a Carnot group. The constants which appear throughout this section depend on the choices of $X_{m+1},\ldots, X_n$ and the scalar product. In turn, our proof of Theorem \ref{Thm:Sobolev_Nilmanifolds} yields an upper bound with the constant $C$ depending on these choices. In order to obtain the optimal constant, one should take the infimum of $C$ over all choices of $X_{m+1},\ldots, X_n$ and the scalar product. Recall that $B_\infty(\cdot,\cdot)$ denotes the closed homogeneous $\infty$-ball given by (\ref{Equation_Homogeneous_Ball}), while $B_{cc}(\cdot, \cdot)$ denotes the open ball in the Carnot-Carath\'{e}odory metric, see Subsection \ref{Subsection_SR_Basic}. We start by proving two auxiliary lemmas. 

\begin{lem}\label{Lemma_Covering_Threshold}
Let $K\subset G$ be compact. There exists $r_0>0$ such that for all $g\in K$ and all $r<r_0$, $\pi : B_{cc}(g,  r) \to N$ is an embedding.
\end{lem}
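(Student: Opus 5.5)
The plan is to use the fact that $\pi:G\to N$ is a covering map whose deck transformations are left multiplications by elements of $\Gamma$. These left multiplications are isometries of $d_{cc}$, because the sub-Riemannian structure is left-invariant.

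First, $\pi|_{B_{cc}(g,r)}$ is injective as soon as no two distinct points of the ball differ by a nontrivial element of $\Gamma$. Suppose $h_1,h_2\in B_{cc}(g,r)$ and $h_2=\gamma h_1$ with $\gamma\in\Gamma\setminus\{id\}$. Then $d_{cc}(h_1,\gamma h_1)<2r$. The key quantity to control is therefore
$$\rho(K)=\inf\{ d_{cc}(h,\gamma h)~|~h\in K',~\gamma\in\Gamma\setminus\{id\}\},$$
where $K'$ is the closed $d_{cc}$-neighbourhood of $K$ of radius $1$. Every $h_1$ lies in $K'$ whenever $r<1$.

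The main step is to show $\rho(K)>0$. The set $K'$ is closed and bounded in $d_{cc}$, hence compact by Corollary \ref{Corollary_Compact_CC}. Suppose $\rho(K)=0$. Then there are $h_j\in K'$ and $\gamma_j\neq id$ with $d_{cc}(h_j,\gamma_jh_j)\to 0$. After passing to a subsequence, $h_j\to h$, and hence $\gamma_jh_j\to h$ as well. It follows that $\gamma_j=(\gamma_jh_j)h_j^{-1}\to id$ by continuity of the group operations. Since $\Gamma$ is discrete, $\gamma_j=id$ for large $j$, which is a contradiction. I would set $r_0=\min(1,\rho(K)/2)$.

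The remaining step is to upgrade injectivity to an embedding. The restriction of $\pi$ to the open set $B_{cc}(g,r)$ is open by Chow–Rashevskii, and it is a local diffeomorphism because $\pi$ is a smooth covering. An injective local diffeomorphism is a diffeomorphism onto its open image, hence an embedding.

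The only real subtlety is the compactness argument giving $\rho(K)>0$, and everything it needs is already available: properness of $d_{cc}$ (Corollary \ref{Corollary_Compact_CC}), left-invariance of $d_{cc}$, and discreteness of $\Gamma$.
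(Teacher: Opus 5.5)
Your argument is correct, but it takes a different route from the paper.

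\textbf{The paper's route.} The paper never uses the deck group. It uses only that $\pi$ is a local embedding. For each $g$ in the compact set $K'=\{h~|~d_{cc}(h,K)\le 1\}$ it picks a radius $r(g)$ on which $\pi$ embeds, and extracts a finite subcover. It then takes $r_0$ comparable to the Lebesgue number of that cover. Hence every ball of radius $r_0$ centred in $K$ lies inside a single ball on which $\pi$ is already known to be an embedding.

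\textbf{Your route.} You exploit the presentation $N=\Gamma\backslash G$ directly. From compactness of $K'$ and discreteness of $\Gamma$ you prove a uniform lower bound $\rho(K)>0$ on the displacement $d_{cc}(h,\gamma h)$, $h\in K'$, $\gamma\in\Gamma\setminus\{id\}$. You then get injectivity on balls of radius less than $\rho(K)/2$ from the triangle inequality. Finally you upgrade injectivity to an embedding via the local-diffeomorphism property.

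\textbf{Comparison.} The paper's argument is purely topological and would apply verbatim to any local homeomorphism out of a proper metric space, but its $r_0$ is inexplicit. Yours uses more structure, but identifies $r_0$ with a natural injectivity-radius-type quantity.

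\textbf{A small remark.} The left-invariance of $d_{cc}$ that you advertise is not actually used in your proof as written. Only the triangle inequality, continuity of the group operations and discreteness of $\Gamma$ enter. Left-invariance would matter if you wanted to drop the dependence on $K$. It makes $h\mapsto\inf_{\gamma\neq id}d_{cc}(h,\gamma h)$ invariant under left multiplication by $\Gamma$. Then taking any compact $K$ with $\pi(K)=N$ gives a threshold uniform on all of $G$.
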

\begin{proof}
Let 
$$K'=\{ g\in G ~|~ d_{cc}(g,K)\leq 1 \}.$$
Corollary \ref{Corollary_Compact_CC} implies that $K'$ is compact.  Since $\pi:G\to N$ is a covering, for every $g\in K'$ there exists $r(g)$ such that $\pi : B_{cc}(g,r(g))\to N$ is an embedding. Let  $\{B_{cc}(g_i,r(g_i)) \}_{i=1,\ldots , q}$ be a finite subcover of the open cover $\{B_{cc}(g,r(g)) \}_{g\in K'}$ of $K'.$ Considering $(K',d_{cc})$ as a compact metric space itself, $\{B_{cc}(g_i,r(g_i)) \cap K' \}_{i=1,\ldots , q}$ is an open cover and, as such, has a Lebesgue number. Let $r_{Leb}$ be the Lebesgue number of this cover and $r_0 = \min (\frac{r_{Leb}}{2}, \frac{1}{2}).$ Using the definition of the Lebesgue number and the fact that $r_0 <1$,  we have that for every $g\in K$ there exists $1\leq i \leq q$ such that
$$B_{cc}(g,r_0) = B_{cc}(g,r_0) \cap K'\subset B_{cc}(g_i,r(g_i)) \cap K'\subset B_{cc}(g_i,r(g_i)).$$
Since $\pi$ is an embedding on all $B_{cc}(g_i,r(g_i))$ the claim follows.
\end{proof}

\begin{lem}\label{Lemma_Uniscale_Covering}
Let $K\subset G$ be compact.  There exists $c$ such that for all $r>0$, there exists $q\leq \max(\frac{c}{r^Q},1)$ and  $g_1,\ldots , g_q \in K$ such that $K\subset \bigcup_{i=1}^q B_{\infty}(g_i,r).$
\end{lem}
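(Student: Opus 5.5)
The plan is a standard volume-packing (maximal separated set) argument, using the measure of homogeneous balls from (\ref{Equation_Volume_of_Balls}) and the quasi-triangle inequality (\ref{Equation_Almost_Triangle}).

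\textbf{Step 1: choose a maximal separated set.} Fix $r>0$. I would choose a maximal family $g_1,\ldots,g_q\in K$ such that $g_j\notin B_\infty(g_i,r)$ for $i\neq j$. Equivalently, $|g_i^{-1}g_j|_\infty>r$. Note that $|h^{-1}|_\infty=|h|_\infty$, since $exp(x)^{-1}=exp(-x)$. So this condition is symmetric, and a greedy choice works. Maximality then gives $K\subset\bigcup_i B_\infty(g_i,r)$ immediately. It remains to bound $q$; in particular, finiteness of $q$ will come out of the bound.

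\textbf{Step 2: the shrunken balls are disjoint.} Let $C_0$ be the constant in (\ref{Equation_Almost_Triangle}) and set $\rho=r/(2C_0)$. Suppose $h\in B_\infty(g_i,\rho)\cap B_\infty(g_j,\rho)$. Then
$$|g_i^{-1}g_j|_\infty=|(g_i^{-1}h)(h^{-1}g_j)|_\infty\le C_0(\rho+\rho)=r,$$
which contradicts separation. Hence the balls $B_\infty(g_i,\rho)$ are pairwise disjoint.

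\textbf{Step 3: compare with a fixed compact neighbourhood.} This step, handling all $r$ uniformly, is the main obstacle. For small $r$, say $r\le 1$, each $B_\infty(g_i,\rho)$ lies in the set
$$K_1=\{h : |g^{-1}h|_\infty\le 1 \text{ for some } g\in K\}=K\cdot B_\infty(id,1).$$
This set is compact as the image of a compact set under multiplication, since $B_\infty(id,1)$ is compact by (\ref{Equation_Box_Box}). By (\ref{Equation_Volume_of_Balls}) and disjointness,
$$q\,a\,\rho^Q\le \mu(K_1),$$
so $q\le c_1/r^Q$ with $c_1=(2C_0)^Q\mu(K_1)/a$.

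For large $r$, I would use a single ball. Pick any $g_1\in K$. Since $K$ is compact and $h\mapsto |g_1^{-1}h|_\infty$ is continuous, $K\subset B_\infty(g_1,R_K)$ for some $R_K$. So when $r\ge R_K$ we may take $q=1$.

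For $1<r<R_K$, use the cover from the case $r=1$. Balls grow with $r$, so this gives $q\le c_1\le c_1R_K^Q/r^Q$.

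Taking $c=c_1\max(1,R_K^Q)$ gives $q\le\max(c/r^Q,1)$ in all cases, with all $g_i\in K$.
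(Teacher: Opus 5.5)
Your proof is correct, but it takes a different route from the paper. The two arguments differ in where the packing property comes from.

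\textbf{The paper's route.} The paper does not build the cover itself. It invokes the Vitali--Wiener covering lemma from \cite{FS82} (Lemma \ref{Lemma_Vitali_Wiener}), which directly supplies centres $g_i\in K$ with $K\subset\bigcup_i B_\infty(g_i,r)$ and $B_\infty(g_i,\alpha r)$ pairwise disjoint. It then compares volumes inside the single thickening $K'=\bigcup_{g\in K}B_\infty(g,R_0)$. This one thickening handles every $r<R_0$ at once, and $q=1$ covers $r\ge R_0$.

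\textbf{Your route.} You prove the needed special case of that lemma by hand.
\begin{itemize}
\item A maximal $r$-separated family gives the cover. Finiteness follows because your volume bound applies to any $r$-separated family, which is all you need for $r\le 1$.
\item The quasi-triangle inequality (\ref{Equation_Almost_Triangle}), together with $|h^{-1}|_\infty=|h|_\infty$, gives disjointness with the explicit constant $\alpha=1/(2C_0)$. Taking $h=id$ in (\ref{Equation_Almost_Triangle}) shows $C_0\ge 1$, which is what guarantees $\rho\le 1$ when $r\le 1$.
\item Your compactness argument is more elementary than the paper's. You treat $K\cdot B_\infty(id,1)$ as the continuous image of $K\times B_\infty(id,1)$ under multiplication. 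The paper instead goes through the ball-box theorem and Corollary \ref{Corollary_Compact_CC}.
\end{itemize}

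\textbf{What each approach buys.} Your version is self-contained and constant-explicit. The price is the three-regime bookkeeping in $r$. If you had thickened at scale $R_K$ instead of $1$, i.e.\ used $K\cdot B_\infty(id,R_K)$, then $\rho=r/(2C_0)<R_K$ for all $r<R_K$. The intermediate range $1<r<R_K$ would then be absorbed exactly as in the paper.
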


Lemma \ref{Lemma_Uniscale_Covering} is a consequence of a covering lemma in the style of Vitali and Wiener. The general statement of the Vitali-Wiener lemma, as well as its proof, are given in \cite[Chapter 1.F]{FS82}. We will only need the following special case.

\begin{lem}\label{Lemma_Vitali_Wiener} There exists a constant $\alpha<1$ such that for every compact set $K\subset G$ and every $r>0$ there exist $g_1,\ldots ,g_q \in K$ such that
\begin{enumerate}
\item $K \subset \bigcup\limits_{i=1}^q B_\infty(g_i,r)$;
\item $B_\infty(g_i, \alpha r)$, $i=1,\ldots , q$ are disjoint.
\end{enumerate} 
\end{lem}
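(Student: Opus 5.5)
We prove Lemma \ref{Lemma_Vitali_Wiener} by a greedy maximal-separation argument, using the quasi-triangle inequality (\ref{Equation_Almost_Triangle}) in place of the triangle inequality. Let $C\geq 1$ be the constant from (\ref{Equation_Almost_Triangle}). We also need a symmetry property: $|g^{-1}|_\infty = |g|_\infty$. This holds because $exp(x)^{-1}=exp(-x)$ and $|\cdot|_\infty$ is invariant under $x\mapsto -x$. We will choose $\alpha = \frac{1}{2C+1}$, or any sufficiently small constant depending only on $C$.

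\textbf{Step 1 (separated set).} Choose points of $K$ greedily: $g_1\in K$ arbitrary, and $g_{j+1}\in K$ any point with $|g_i^{-1}g_{j+1}|_\infty > r$ for all $i\leq j$. Stop when no such point exists.

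\textbf{Step 2 (termination).} The process must stop after finitely many steps. We first show that the balls $B_\infty(g_i,\alpha r)$ are pairwise disjoint. Suppose $h\in B_\infty(g_i,\alpha r)\cap B_\infty(g_j,\alpha r)$. Then
$$|g_i^{-1}g_j|_\infty = |(g_i^{-1}h)(h^{-1}g_j)|_\infty \leq C(|g_i^{-1}h|_\infty+|g_j^{-1}h|_\infty)\leq 2C\alpha r < r.$$
This contradicts the separation, so the balls are disjoint. Moreover, each such ball lies in a fixed compact neighbourhood $K''$ of $K$. Indeed, $K$ is bounded in $|\cdot|_\infty$, so by (\ref{Equation_Almost_Triangle}) the union of these balls is bounded, hence relatively compact by Corollary \ref{Corollary_Compact_CC} together with the ball-box theorem. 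Each ball has measure $a(\alpha r)^Q$ by (\ref{Equation_Volume_of_Balls}), so there are at most $\mu(K'')/(a(\alpha r)^Q)$ of them. Hence the process terminates. This establishes item (2).

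\textbf{Step 3 (covering).} When the process stops, every $h\in K$ satisfies $|g_i^{-1}h|_\infty\leq r$ for some $i$, i.e.\ $h\in B_\infty(g_i,r)$. This is item (1). If $K$ is empty, the statement holds trivially.

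The main subtlety is the lack of a genuine triangle inequality. This is handled by (\ref{Equation_Almost_Triangle}) together with the symmetry of $|\cdot|_\infty$ under inversion, which is why $\alpha$ must depend on $C$ rather than simply being $\frac12$. The same volume count also yields Lemma \ref{Lemma_Uniscale_Covering}. For $r\leq 1$, the set $K''$ can be chosen independently of $r$, which gives $q\leq c/r^Q$. For $r$ large, we instead bound $q$ by a constant (eventually $1$ once $K\subset B_\infty(g,r)$ for any $g\in K$), and absorb the intermediate range into $c$.
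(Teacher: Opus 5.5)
Your proof is correct: the greedy $r$-separated selection, the disjointness of the $\alpha r$-balls via (\ref{Equation_Almost_Triangle}) together with $|g^{-1}|_\infty=|g|_\infty$ (with $\alpha=\frac{1}{2C+1}$), termination by the volume count, and the covering property by maximality all check out. The paper does not prove this lemma itself but cites the general Vitali--Wiener covering lemma of \cite[Chapter 1.F]{FS82}; your argument is the standard proof of that lemma specialized to equal radii, and your volume count is the same one the paper then uses to prove Lemma \ref{Lemma_Uniscale_Covering}.
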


\begin{proof}[Proof of Lemma \ref{Lemma_Uniscale_Covering}]
Since $K$ is bounded, there exists $R_0$ and $g\in K$ such that $K\subset B_\infty(g,R_0).$ Thus, if $r\geq R_0$, we may take $q=1.$ We are left to prove the statement for $r<R_0.$ Let $g_1,\ldots , g_q \in K$ be given by Lemma \ref{Lemma_Vitali_Wiener}. By property (1) we have that $K \subset \bigcup\limits_{i=1}^q B_\infty(g_i,r)$ and we need to prove that $q\leq \frac{c}{r^Q}.$ To this end, let $K'$ be the $R_0$-thickening of $K$ with respect to $|\cdot |_\infty$ given by
$$K'= \bigcup\limits_{g\in K} B_\infty(g,R_0).$$
We claim that $K'$ is compact. By Corollary \ref{Corollary_Compact_CC} it is enough to show that it is closed and bounded in $d_{cc}.$ It is closed because $| \cdot |_\infty$ is continuous and $K$ is compact. On the other hand, it is bounded in $| \cdot |_\infty$ by (\ref{Equation_Almost_Triangle}) and thus also in $d_{cc}$ by Theorem \ref{Thm_Ball_Box_Carnot}.

Since $K'$ is compact we have that it is measurable and $\mu(K')<+\infty$ by the definition of a Haar measure. Furthermore,
$$\bigcup\limits_{i=1}^q B_\infty(g_i,\alpha r)  \subset \bigcup\limits_{i=1}^q B_\infty(g_i,R_0) \subset K',$$
and thus property (2) in Lemma \ref{Lemma_Vitali_Wiener} implies that
$$\sum_{i=1}^q \mu(B_\infty(g_i,\alpha r))  \leq \mu(K') . $$
Combining this inequality and (\ref{Equation_Volume_of_Balls}) proves $q\leq \frac{c}{r^Q},$ with $c=\frac{\mu(K')}{a\alpha^Q}.$
\end{proof}

We are now in a position to prove Theorem \ref{Thm:Sobolev_Nilmanifolds}.

\begin{proof}[Proof of Theorem \ref{Thm:Sobolev_Nilmanifolds}]

Let $C_0$ be the constant given by Corollary \ref{Corollary_Basic_Sobolev} so that
$$\max |F| - \min |F| \leq \max |F| \leq C_0   \|F\|_{W^{k,p}_X}.$$
If $C_0   \|F \|_{W^{k,p}_X} < \delta$ we have that 
$$\max(-|F|) - \min (-|F|) = \max |F| - \min |F| <\delta.$$
Thus, the only bars of length greater than $\delta$ in $\B(\pm|F|)$ are infinite bars. In other words, $\mathcal{N}_{d,\delta}(\pm|F|)=b_d(N)$ and Theorem \ref{Thm:Sobolev_Nilmanifolds} follows. Hence, in the rest of the proof we assume that $\frac{\delta}{C_0   \|F \|_{W^{k,p}_X}} \leq 1.$

We will first prove the case $d=0$ and then treat the case $d=n-1$ at the very end. Let $C_1$ be the constant given by Corollary \ref{Corollary_Polynomial_Approximation} and $C_2$ the constant given by the ball-box theorem (Theorem \ref{Thm_Ball_Box_Carnot}). Denote by $\tilde{F}=F\circ \pi :G\to \R^l$ the lift of $F.$ Let $K\subset G$ be a compact set such that $\pi(K)=N$ and let $r_0$ be the threshold given by Lemma \ref{Lemma_Covering_Threshold}. We wish to cover $K$ by homogeneous balls $B_\infty(g_i, r)$, $g_i \in G$, on which $|F|$ has a good approximation by the square root of a polynomial. To this end, we first choose a scale parameter $r.$ Let 
$$r= C_3 \left( \frac{\delta}{2C_1  \|F \|_{W^{k,p}_X}} \right)^{\frac{p}{kp-Q}} ,$$
where
$$C_3= \min \left(\frac{1}{2C_2} , \frac{r_0}{2C_2} \left( \frac{2C_1}{C_0} \right)^{\frac{p}{kp-Q}} \right).$$
Since $\frac{\delta}{C_0   \|F \|_{W^{k,p}_X}} \leq 1,$ we have that
\begin{equation}\label{Equation_threshold}
r < \min \left( \frac{1}{C_2} \left( \frac{\delta}{2C_1  \|F \|_{W^{k,p}_X}} \right)^{\frac{p}{kp-Q}} , \frac{r_0}{C_2} \right).
\end{equation}
Let $g_1,\ldots , g_q \in G$ be given by Lemma \ref{Lemma_Uniscale_Covering} for this choice of $K$ and $r.$ It follows that
$$
q\leq \max\left(\frac{c}{r^Q},1 \right)=\max \left( \frac{c (2C_1)^{\frac{Qp}{kp-Q}}}{C_3^Q} \left( \frac{  \|F \|_{W^{k,p}_X}}{\delta} \right)^{\frac{Qp}{kp-Q}},1\right).
$$
Since by assumption $\frac{\delta}{C_0   \|F \|_{W^{k,p}_X}} \leq 1,$ we conclude that
\begin{equation}\label{Equation_Number_of_Balls}
q \leq C'  \left( \frac{  \|F \|_{W^{k,p}_X}}{\delta} \right)^{\frac{Qp}{kp-Q}} ,
\end{equation}
where $C'=\max \left(\frac{c (2C_1)^{\frac{Qp}{kp-Q}}}{C_3^Q }, C_0^{\frac{Qp}{kp-Q}} \right).$

We wish to prove that for all $1\leq i \leq q$ it holds that
\begin{equation}\label{Equation_Delta_Box}
\mathcal{N}_{0,\delta}(\pm|F|_{\pi(B_\infty(g_i,r) )}|)\leq \frac{(2k-1)^n+1}{2}. 
\end{equation}
This estimate together with (\ref{Equation_Number_of_Balls}) readily yields the proof of the theorem. Indeed, by the subadditivity theorem (Theorem \ref{Therem_Subadditivity_Deg_0})
$$
\mathcal{N}_{0,\delta}(\pm|F|) \leq \sum_{i=1}^q \mathcal{N}_{0,\delta}(\pm|F|_{\pi(B_\infty(g_i,r) )}|) \leq q \cdot \frac{(2k-1)^n+1}{2} \leq C \left( \frac{  \|F \|_{W^{k,p}_X}}{\delta} \right)^{\frac{Qp}{kp-Q}}, 
$$
where $C=C' \frac{(2k-1)^n+1}{2}.$\footnote{Note that the inequality which we obtain does not include the $+b_0(N)$ term on the right-hand side. This is not surprising since we are working under the assumption that $\frac{  \|F \|_{W^{k,p}_X}}{\delta}\geq \frac{1}{C_0}$ and thus any constant term can be absorbed in $C.$}

What is left is to prove (\ref{Equation_Delta_Box}). To this end, notice that, by (\ref{Equation_threshold}), $C_2r<r_0$ and thus Lemma \ref{Lemma_Covering_Threshold} implies that $\pi: B_{cc}(g_i,  C_2 r) \to N$ is an embedding for all $i.$ On the other hand, the ball-box theorem implies that $B_\infty(g_i,r)\subset B_{cc}(g_i,C_2 r)$, and hence $\pi$ is a homeomorphism on all $B_\infty(g_i,r).$ Equation (\ref{Equation_Barcode_invariance}) implies that
$$\mathcal{B}(\pm|F|_{\pi(B_\infty(g_i,r) )}|)= \mathcal{B}((\pm|F|\circ \pi)|_{B_\infty(g_i,r) }))=\mathcal{B}(\pm|\tilde{F}|_{B_\infty(g_i,r) }|),$$
and we are left to prove that for all $1\leq i \leq q$
\begin{equation}\label{Equation_Lift_Bound}
\mathcal{N}_{0,\delta}(\pm|\tilde{F}|_{B_\infty(g_i,r) }|)\leq \frac{(2k-1)^n+1}{2}.
\end{equation}
We will do this by approximating $|\tilde{F}|$ by an element of $\mathcal{S}_{k-1}(\G).$ Namely, (\ref{Equation_threshold}) implies that
$$ C_1 (C_2r)^{k-\frac{Q}{p}} \|F \|_{W^{k,p}_X}< \frac{\delta}{2}.$$
Since $\pi$ is an embedding of $B_{cc}(g_i,  C_2r)$, we have that for all $i$
$$\|\tilde{F}|_{B_{cc}(g_i,C_2r) } \|_{W^{k,p}_X} = \| F|_{\pi(B_{cc}(g_i,C_2r) )} \|_{W^{k,p}_X}\leq  \|F \|_{W^{k,p}_X},$$
and thus
$$C_1 (C_2r)^{k-\frac{Q}{p}} \| \tilde{F}|_{B_{cc}(g_i,C_2r) } \|_{W^{k,p}_X} < \frac{\delta}{2}.$$
Corollary \ref{Corollary_Polynomial_Approximation} implies that for every $1\leq i \leq q$ there exists $S_i\in \mathcal{S}_{k-1}(\G)$ such that
$$d_{C^0}(-|\tilde{F}|_{B_{cc}(g_i,C_2r)}|,-S_i|_{B_{cc}(g_i,C_2r)})=d_{C^0}(|\tilde{F}|_{B_{cc}(g_i,C_2r)}|,S_i|_{B_{cc}(g_i,C_2r)})<\frac{\delta}{2}.$$
Since $B_\infty(g_i,r)\subset B_{cc}(g_i,C_2 r)$, we have that 
$$d_{C^0}(\pm|\tilde{F}|_{B_{\infty}(g_i,r)}|,\pm S_i|_{B_{\infty}(g_i,r)})<\frac{\delta}{2}.$$
Finally, since all $\pm S_i \in \mathcal{S}_{k-1}(\G)$, Proposition \ref{Proposition_BarCount_Stability} and Corollary \ref{Corollary_Barcode_Bound} imply that
$$\mathcal{N}_{0,\delta}(\pm|\tilde{F}|_{B_{\infty}(g_i,r)}|)\leq \mathcal{N}_{0,0}(\pm S_i|_{B_{\infty}(g_i,r)}) \leq \frac{(2k-1)^n+1}{2}.$$
This proves (\ref{Equation_Lift_Bound}) and finishes the proof of the theorem for $d=0.$ To treat the case $d=n-1$, it is enough to notice that by Proposition \ref{Proposition_Duality_Barcodes}
$$\mathcal{N}_{n-1,\delta}(\pm|F|)=\mathcal{N}^{fin}_{n-1,\delta}(\pm|F|) +b_{n-1}(N) = \mathcal{N}^{fin}_{0,\delta}(\mp|F|) +b_{n-1}(N),$$
while by the already-proven inequality in the case $d=0$
$$\mathcal{N}^{fin}_{0,\delta}(\mp|F|) < \mathcal{N}_{0,\delta}(\mp|F|) \leq C \left( \frac{  \|F \|_{W^{k,p}_X}}{\delta} \right)^{\frac{Qp}{kp-Q}}.$$
\end{proof}

\section{Proof of Theorems \ref{Thm:Courant_Nilmanifolds}, \ref{Theorem_Bezout_Main} and Corollary \ref{Corollary_Maximally_Hypoelliptic}}

We will prove both theorems simultaneously.

\begin{proof}
Let $\G$ be a Carnot group, $\Gamma < \G$ a lattice such that $N=\Gamma \backslash \G$, $\pi:G\to N$ the canonical projection and $\mu$ a Haar measure on $G$ which induces $\nu.$ The measure $\mu$ is unique, its density being given locally by the pull-back by $\pi$ of the density of $\nu$. Let $\frak{g}=V_1 \oplus \ldots \oplus V_s$ be a stratification of the Lie algebra of $\G$, $\tilde{X}_1, \ldots , \tilde{X}_m$ a basis of $V_1$ and $X_1=\pi_*(\tilde{X}_1),  \ldots , X_m=\pi_*(\tilde{X}_m).$ Recall that $\varepsilon_k=\frac{Q}{2k-Q}.$ 

Theorem \ref{Thm:Sobolev_Nilmanifolds} with $p=2$ together with (\ref{Inequality_Zeros_Bar}) and (\ref{Inequality_Nodal_Bar}) with $\varepsilon=\frac{\delta}{2}$ implies that for $d\in \{0,n-1 \}$ and all integers $k>\frac{Q}{2}$
\begin{equation}\label{Equation:Bezout_Proof}
z_{d,\delta}(F)\leq \mathcal{N}_{d,\delta} (|F|) \leq C' \left( \frac{ \|F \|_{W^{k,2}_X} }{\delta} \right)^{2\varepsilon_k}+ b_d(N),
\end{equation}
as well as
\begin{equation}\label{Equation:Courant_Prelim}
m_{d,\delta}(F)\leq \mathcal{N}_{d,\frac{\delta}{2}} (-|F|) \leq C'' \left( \frac{ \|F \|_{W^{k,2}_X} }{\delta} \right)^{2\varepsilon_k}+b_d(N),
\end{equation}
where $C''=C' \cdot 2^{2\varepsilon_k}$ and  $C'$ depends only on $\G,\Gamma,\nu,l,k$ and the vector fields $X_1,\ldots,X_m.$ 

As a first step, we wish to strengthen (\ref{Equation:Courant_Prelim}) by removing the $+b_d(N)$ term on the right-hand side of the inequality.  More precisely, we wish to show that 
\begin{equation}\label{Equation:Courant_Improved}
m_{d,\delta}(F)\leq C''' \left( \frac{ \|F \|_{W^{k,2}_X} }{\delta} \right)^{2\varepsilon_k},
\end{equation}
for some $C'''$ which depends only on $\G,\Gamma,\nu,k,l$ and the vector fields $X_1,\ldots,X_m.$ To this end,  let $C_0$ be the constant given by Corollary \ref{Corollary_Basic_Sobolev} with $p=2$ so that
$$\max |F| \leq C_0 \|F \|_{W^{k,2}_X}.$$
If $\delta > C_0  \|F \|_{W^{k,2}_X}$, we have that $\max |F|<\delta$ and thus $\{ |F|>\delta \}=\emptyset.$ It follows that $m_{d,\delta}(F)=0$ and (\ref{Equation:Courant_Improved}) holds for any choice of $C'''>0.$ On the other hand, if $\delta \leq C_0  \|F \|_{W^{k,2}_X}$, we have that $\frac{\|F \|_{W^{k,2}_X}}{\delta} \geq \frac{1}{C_0}$ and the $+b_d(N)$ term can be absorbed into the constant $C'$ which proves (\ref{Equation:Courant_Improved}). 

Corollary \ref{Corollary_Maximally_Hypoelliptic} follows from (\ref{Equation:Bezout_Proof}) and (\ref{Equation:Courant_Improved}) with $k=r$ combined with (\ref{Equation_Maximally_Hypoelliptic_Systems}). To prove Theorems \ref{Thm:Courant_Nilmanifolds} and \ref{Theorem_Bezout_Main}, let us take $\tilde{X}_1,\ldots , \tilde{X}_m$ to be an orthonormal basis of $V_1.$ In this case, the set of vector fields $X=\{X_1,\ldots , X_m\}$ generates the sub-Riemannian structure on $N$, as explained in Example \ref{Example_Gen_Car}.  By (\ref{Equation:Sublaplacians_Equality}), we have that $P=P_{X,\nu}$ and Theorem \ref{Theorem_Subelliptic_Estimates} gives us that
\begin{equation}\label{Estimate_Coordinates}
\|F \|^2_{W^{k,2}_X} = \sum_{i=1}^l \|f_i \|^2_{W^{k,2}_X} \leq \tilde{C}^2 \sum_{i=1}^l \|(1+P)^{\frac{k}{2}}f_i \|^2_{L^2},
\end{equation}
where $\tilde{C}$ depends on $(N,\nu),k$ and the vector fields $X_1,\ldots , X_m.$ Since for all $i$, $f_i \in  \mathcal{F}_\lambda$, we have that $f_i=\sum_{j} a_{i,j} u_j$ where $P u_j = \lambda_j u_j$ with $\lambda_j \leq \lambda$, $\|u_j \|_{L^2}=1$ and all $u_j$ are pairwise orthogonal.  Furthermore, since $\|f_i \|^2_{L^2}=\sum_{j} a_{i,j}^2$, it follows that for all $i$
\begin{equation}\label{Estimate_Orthogonal}
\|(1+P)^{\frac{k}{2}} f_i \|_{L^2}^2 = \Big\Vert \sum_{j} a_{i,j} (1+\lambda_j)^{\frac{k}{2}}u_j \Big\Vert_{L^2}^2 = \sum_{j} a_{i,j}^2 (1+\lambda_j)^{k} \leq  (1+\lambda)^{k}\|f_i \|^2_{L^2}
\end{equation}
Putting the estimates (\ref{Estimate_Coordinates}) and (\ref{Estimate_Orthogonal}) together and using $\|F\|_{L^2}=1$ yields
$$\|F \|_{W^{k,2}_X} \leq \tilde{C} \cdot (1+\lambda)^{\frac{k}{2}}.$$
Combining this estimate with (\ref{Equation:Courant_Improved}) proves Theorem \ref{Thm:Courant_Nilmanifolds}, while combining it with (\ref{Equation:Bezout_Proof}) proves Theorem \ref{Theorem_Bezout_Main}.
\end{proof}

\section{Proof of Theorems \ref{Thm:Sobolev_RotStein} and \ref{Thm:Spectral_RotStein}}

\subsection{Proof of Theorem \ref{Thm:Sobolev_RotStein}}

\begin{proof}
Let us fix an auxiliary Riemannian metric $g$ on $M$ and denote by $\|\cdot \|_{W^{k,p}}$ the usual Sobolev norm with respect to this metric. Theorem 1.12, Remark 1.13 and Remark 1.14 in \cite{BPPPSS22} state that for every $d\geq 0$
\begin{equation}\label{Thm:Sobolev_Original}
\mathcal{N}_{d,\delta} (\pm|F|) \leq \widetilde{C} \left( \frac{\|F \|_{W^{k,p}} }{\delta} \right)^{\frac{n}{k}}+b_d(M),
\end{equation}
with $\widetilde{C}$ depending on $(M,g),l,k,p.$ On the other hand,  by applying Proposition \ref{Proposition_Sobolev_Sobolev} to each $f_i$, we obtain that
\begin{equation}\label{Equation_sk_Sobolev}
\|F \|_{W^{k,p}} \leq C' \|F \|_{W^{sk,p}_X},
\end{equation}
with $C'$ depending only on $(M,g),k,p$, and the vector fields $X_1,\ldots , X_m.$ Putting the two inequalities together yields
\begin{equation}\label{Equation_d_Sobolev}
\mathcal{N}_{d,\delta} (\pm|F|) \leq C \left( \frac{\|F \|_{W^{sk,p}_X} }{\delta} \right)^{\frac{n}{k}}+b_d(M),
\end{equation}
where $C$ depends only on $(M,g),l,k,p$ and the vector fields $X_1, \ldots , X_m.$ Note that in (\ref{Equation_d_Sobolev}), the norm $\|F \|_{W^{sk,p}_X}$ is taken with respect to the measure induced by $g.$ To finish the proof, note that since $M$ is compact, this norm is equivalent to the one defined with respect to $\mu.$ 
\end{proof}

\subsection{Proof of Theorem \ref{Thm:Spectral_RotStein}}

\begin{proof} Let $X_1,\ldots , X_m$ be the generating vector fields for $\xi$ given by Theorem \ref{Theorem_Generating_VF}. We have that $P_\mu = P_{X,\mu}$ by (\ref{Equation:Sublaplacians_Equality}).  As in the proof of Theorem \ref{Thm:Sobolev_RotStein}, let us fix an auxiliary Riemannian metric $g_{R}$ on $M$ and denote by $\|\cdot \|_{W^{k,p}}$ the usual Sobolev norm with respect to this metric.  Theorem 1.1 in \cite{BPPPSS22} states that
$$
m_{d,\delta} (F) \leq \widetilde{C} \left( \frac{\|F \|_{W^{k,p}} }{\delta} \right)^{\frac{n}{k}},
$$
where $\widetilde{C}$ depends on $(M,g_{R}),l,k,p.$ Combining this inequality with (\ref{Equation_sk_Sobolev}) yields
$$
m_{d,\delta} (F) \leq C'' \left( \frac{\|F \|_{W^{sk,p}_X} }{\delta} \right)^{\frac{n}{k}},$$
with constant $C''$ depending on $(M,g_{R}),l,k,p$, and the vector fields $X_1,\ldots , X_m.$ To finish the proof, we argue in the same way as in the proof of Theorems \ref{Thm:Courant_Nilmanifolds} and \ref{Theorem_Bezout_Main}. Namely, we take $p=2$ and the same derivation as (\ref{Estimate_Coordinates}) gives us that
\begin{equation}\label{Equation:Mdelta_General}
m_{d,\delta} (F) \leq \frac{C}{\delta^{\frac{n}{k}}} \left( \sum_{i=1}^l \|(1+P)^{\frac{sk}{2}} f_i \|^2_{L^2}\right)^{\frac{n}{2k}},
\end{equation}
where $C$ depends on $(M,g_{R}),\mu,l,k$ and the vector fields $X_1,\ldots , X_m.$ Proceeding as in (\ref{Estimate_Orthogonal}), we obtain that for all $1\leq i\leq l$
$$\|(1+P)^{\frac{sk}{2}} f_i \|_{L^2}^2 \leq (1+\lambda)^{sk}\|f_i\|^2_{L^2}.$$
Combining this estimate with (\ref{Equation:Mdelta_General}) and using $\|F\|_{L^2}=1$ finishes the proof. 
\end{proof}


\begin{thebibliography}{abbrv}

\bibitem{ABB19} A. Agrachev, D. Barilari and U. Boscain, \emph{A comprehensive introduction to sub-Riemannian geometry}. Cambridge University Press, 2019.

\bibitem{Arnold73} V. Arnold, \emph{The topology of real algebraic curves (works of I.G. Petrovsky and their development)}. Translation of Usp. Mat. Nauk 28:5, 260-262, 1973, translated by Oleg Viro.

\bibitem{BL15} U. Bauer and M. Lesnick, \emph{Induced matchings and the algebraic stability of persistence barcodes}. J. Comput. Geom. 6 (2015), no. 2, 162-191.

\bibitem{BMMS24} U. Bauer, A. M. Medina-Mardones, M. Schmahl, \emph{Persistent homology for functionals}. Commun. Contemp. Math. 26 (2024), no. 10, Paper No. 2350055.

\bibitem{SubRiemannian96} A. Bella\"{i}che and J.-J. Risler (eds.), \emph{Sub-Riemannian geometry}. Progress in Mathematics, vol. 144. Birkh\"{a}user Verlag, Basel (1996).

\bibitem{BCH22} P. B\'{e}rard, P. Charron and B. Helffer, \emph{Non-boundedness of the number of super level domains of eigenfunctions}. J. Anal. Math. 146 (2022), no. 1, 127-164.

\bibitem{BH18} P. B\'{e}rard and B. Helffer, \emph{On Courant's nodal domain property for linear combinations of eigenfunctions. Part I}. Doc. Math. 23 (2018), 1561-1585.

\bibitem{BH20a} P. B\'{e}rard and B. Helffer, \emph{Sturm's theorem on zeros of linear combinations of eigenfunctions}. Expo. Math. 38 (2020), no. 1, 27-50.

\bibitem{BH20b} P. B\'{e}rard and B. Helffer, \emph{Sturm's theorem on the zeros of sums of eigenfunctions: Gelfand's strategy implemented}. Mosc. Math. J. 20 (2020), no. 1, 1-25.

\bibitem{BH21} P. B\'{e}rard and B. Helffer, \emph{On Courant's nodal domain property for linear combinations of eigenfunctions part II}. Schr\"{o}dinger operators, spectral analysis and number theory, 47-88. Springer Proc. Math. Stat. 348 Springer, Cham, 2021.

\bibitem{BH21b} P. B\'{e}rard and B. Helffer, \emph{Level sets of certain Neumann eigenfunctions under deformation of Lipschitz domains application to the extended Courant property}. Ann. Fac. Sci. Toulouse Math. (6) 30 (2021), no. 3, 429-462.


\bibitem{BLU_Book} A.  Bonfiglioli, E. Lanconelli and F. Uguzzoni, \emph{Stratified Lie groups and potential theory for their sub-Laplacians}. Springer Monographs in Mathematics. Springer, Berlin, 2007.



\bibitem{BLS20} L. Buhovsky, A. Logunov and M. Sodin, \emph{Eigenfunctions with infinitely many isolated critical points}. Int. Math. Res. Not. IMRN 2020, no. 24, 10100-10113.

\bibitem{BPPPSS22} L. Buhovsky, J. Payette, I. Polterovich, L. Polterovich, E. Shelukhin and V. Stojisavljevi\'c, \emph{Coarse nodal count and topological persistence}. J. Eur. Math. Soc. (JEMS) 28 (2026), no. 7, 3131-3202.

\bibitem{BPPSS23} L. Buhovsky, I. Polterovich, L. Polterovich, E. Shelukhin and V. Stojisavljevi\'c, \emph{Persistent transcendental B\'{e}zout theorems}. Forum of Mathematics, Sigma, Volume 12, 2024, e72.

\bibitem{CH26} M. Chatzakou and B. Helffer, \emph{Generic simplicity for self-adjoint operators under bounded potential perturbations}. arXiv:2605.31368.

\bibitem{CCBdS16} F. Chazal, W. Crawley-Boevey, V. de Silva, \emph{The observable structure of persistence modules}. Homology Homotopy Appl. 18 (2016), no. 2, 247-265.

\bibitem{CdSGO16} F. Chazal, V. de Silva, M. Glisse, S. Oudot, \emph{The structure and stability of persistence modules}. Springer Briefs Math. Springer, 2016.

\bibitem{CEH07} D. Cohen-Steiner, H. Edelsbrunner, J. Harer, \emph{Stability of persistence diagrams}. Discrete Comput. Geom. 37 (2007), no. 1, 103-120.

\bibitem{CSEHM} D. Cohen-Steiner, H. Edelsbrunner, J. Harer and Y. Mileyko, \emph{Lipschitz functions have {$L_p$}-stable persistence}, Found. Comput. Math. 10 (2010), no. 2, 127-139.

\bibitem{CdVHT22} Y. Colin de Verdi\`{e}re, L. Hillairet and E. Tr\'{e}lat, \emph{Spectral asymptotics for sub-Riemannian Laplacians}, arXiv:2212.02920.

\bibitem{WCB15} W. Crawley-Boevey, \emph{Decomposition of pointwise finite-dimensional persistence modules}. J. Algebra Appl. 14 (2015), no. 5, 1550066.



\bibitem{ELZ02} H. Edelsbrunner, D. Letscher, A. Zomorodian, \emph{Topological persistence and simplification}. Discrete Comput. Geom. 28 (2002), no. 4, 511-533.

\bibitem{EL23} S. Eswarathasan and C. Letrouit, \emph{Nodal sets of eigenfunctions of sub-Laplacians}. Int. Math. Res. Not. IMRN 2023, no. 23, 20670-20700.


\bibitem{FS82} G. B. Folland and E. M. Stein, \emph{Hardy spaces on homogeneous groups}. Mathematical Notes, 28. Princeton University Press, Princeton, NJ; University of Tokyo Press, Tokyo, 1982.


\bibitem{FH25a} R. L. Frank and B. Helffer, \emph{On Courant and Pleijel theorems for sub-Riemannian Laplacians}. Pseudo-differential operators and related topics, 9-23. Trends Math. Res. Perspect. Ghent Anal. PDE Cent. 8 Birkh\"{a}user/Springer, Cham, 2025.

\bibitem{FH25b} R. L. Frank and B. Helffer, \emph{On Courant and Pleijel theorems for sub-Riemannian Laplacians}. J. \'{E}c. polytech. Math. 12 (2025), 1083–1160.

\bibitem{FH26} R. L. Frank and B. Helffer, \emph{Pleijel's theorem for a class of degenerate elliptic operators}. arXiv:2606.04951.

\bibitem{Ginot24} G. Ginot, \emph{Un aperçu des modules de persistance et de leurs applications}. S\'{e}minaire Bourbaki, Vol. 2023/2024, expos\'{e}s 1211-1226, Ast\'{e}risque No. 454 (2024), Exp. No. 1225, 607–641.

\bibitem{GZ03} G. M. L. Gladwell and H.  Zhu, \emph{The Courant-Herrmann conjecture}. Z. Angew. Math. Mech. 83 (2003), no. 4, 275-281.

\bibitem{GSKL26} J. G\'{o}mez-Serrano, R. Koirala and A. Logunov, \emph{Nested nodal loops of biharmonic functions}. arXiv:2605.18699.

\bibitem{Hormander67} L. H\"{o}rmander, \emph{Hypoelliptic second order differential equations}. Acta Math. 119 (1967), 147-171.

\bibitem{Koirala26} R. Koirala, \emph{Nested nodal loops for sums of Laplace eigenfunctions}. arXiv:2605.18705.

\bibitem{Kronrod50} A. S. Kronrod, \emph{On functions of two variables}, (in Russian), Uspekhi Matem. Nauk (N.S.) 35 (1950), 24-134.

\bibitem{LL22} C.  Laurent and M. L\'{e}autaud, \emph{Tunneling estimates and approximate controllability for hypoelliptic equations}. Mem. Amer. Math. Soc. 276 (2022), no. 1357.

\bibitem{LeDonne25} E. Le Donne, \emph{Metric Lie groups—Carnot-Carath\'{e}odory spaces from the homogeneous viewpoint}. Grad. Texts in Math. 306, Springer, Cham, 2025.

\bibitem{LMP23} M. Levitin, D. Mangoubi, I. Polterovich, \emph{Topics in spectral geometry}. Grad. Stud. Math. 237, American Mathematical Society, Providence, RI, 2023.

\bibitem{LW00} G.  Lu and R. L. Wheeden, \emph{High order representation formulas and embedding theorems on stratified groups and generalizations}. Studia Math. 142 (2000), no. 2, 101-133.

\bibitem{LW04} G.  Lu and R. L. Wheeden, \emph{Simultaneous representation and approximation formulas and high-order Sobolev embedding theorems on stratified groups}. Constr. Approx. 20 (2004), no. 4, 647-668.

\bibitem{Metivier76} G. M\'{e}tivier, \emph{Fonction spectrale et valeurs propres d'une classe d'op\'{e}rateurs non elliptiques}. Comm. Partial Differential Equations 1 (1976), no. 5, 467-519.

\bibitem{Mont02} R. Montgomery, \emph{A Tour of Subriemannian Geometries, Their Geodesics and Applications}. Mathematical Surveys and Monographs 91, American Mathematical Society, Providence, RI, 2002.


\bibitem{Oudot15} S. Oudot, \emph{Persistence theory: from quiver representations to data analysis}. Math. Surveys Monogr. 209 American Mathematical Society, Providence, RI, 2015.

\bibitem{Perez22} D. Perez, \emph{On $C^0$-persistent homology and trees}. arXiv:2012.02634.


\bibitem{PPS19} I. Polterovich, L. Polterovich and V. Stojisavljevi\'c, \emph{Persistence barcodes and Laplace eigenfunctions on surfaces}. Geom. Dedicata 201 (2019), 111-138.

\bibitem{PRSZ20} L. Polterovich, D. Rosen, K. Samvelyan, J. Zhang, \emph{Topological persistence in geometry and analysis}. Univ. Lecture Ser. 74 American Mathematical Society, Providence, RI, 2020.

\bibitem{PolSod07} L. Polterovich and M. Sodin, \emph{Nodal inequalities on surfaces}, Math. Proc. Cambridge Philos. Soc. 143 (2007), no. 2, 459-467.

\bibitem{Rag76} M. S. Raghunathan, \emph{Discrete subgroups of Lie groups}. Springer-Verlag, New York-Heidelberg, 1972.

\bibitem{Robinson91} D. W. Robinson, \emph{Elliptic Operators and Lie Groups}. Oxford Math. Monogr. Oxford Sci. Publ. The Clarendon Press, Oxford University Press, New York, 1991.

\bibitem{RS76} L. P. Rothschild and E. M. Stein, \emph{Hypoelliptic differential operators and nilpotent groups}. Acta Math. 137 (1976), no. 3-4, 247-320.

\bibitem{Schmahl22} M. Schmahl, \emph{Structure of semi-continuous $q$-tame persistence modules}. Homology Homotopy Appl. 24 (2022), no. 1, 117-128.

\bibitem{Schmahl25} M. Schmahl, \emph{Comparison of persistent singular and \v{C}ech homology for locally connected filtrations}. Proc. Amer. Math. Soc. 153 (2025), no. 1, 421-436.

\bibitem{Irene26} I. Silvestre-Rosell\'{o}, \emph{Weak and Coarse Courant's theorems for perturbed sub-Laplacians}. PhD thesis, Universit\'{e} de Montr\'{e}al, 2026.

\bibitem{Stoji20} V. Stojisavljevi\'c, \emph{Persistence modules in geometry and dynamics}. PhD thesis, Tel Aviv University, 2020.

\bibitem{Stoji24} V. Stojisavljevi\'c, \emph{Harmonic functions with highly intersecting zero sets}. To appear in Proceedings of the American Mathematical Society.

\bibitem{Street14} B. Street, \emph{Multi-parameter singular integrals}. Ann. of Math. Stud. 189, Princeton University Press, Princeton, NJ, 2014.

\bibitem{VSCC92}  N. Th. Varopoulos, L. Saloff-Coste, T. Coulhon, \emph{Analysis and geometry on groups}. Cambridge Tracts in Math. 100, Cambridge University Press, Cambridge, 1992.

\bibitem{Viro79} O. Y. Viro, \emph{Construction of multicomponent real algebraic surfaces}. Dokl. Akad. Nauk SSSR 248 (1979), no. 2, 279-282.

\bibitem{Vitushkin55} A.G. Vitushkin, \emph{On higher-dimensional variations}. Moscow, 1955.

\bibitem{Yomdin85} Y. Yomdin, \emph{Global bounds for the Betti numbers of regular fibers of differentiable mappings},  Topology 24 (1985), 145-152.

\bibitem{ZC05} A. Zomorodian and G. Carlsson, \emph{Computing persistent homology}. Discrete Comput. Geom. 33 (2005), no. 2, 249-274.


\end{thebibliography}
\end{document}